\theoremstyle{plain}
\numberwithin{equation}{section}
\newtheorem{theorem}{Theorem}[section]
\newtheorem{proposition}[theorem]{Proposition}
\newtheorem{lemma}[theorem]{Lemma}
\newtheorem{corollary}[theorem]{Corollary}
\newtheorem{property}[theorem]{Property}
\theoremstyle{remark}
\newtheorem{remark}[theorem]{Remark}
\newcommand{\mb}{\medskip\noindent}
\newcommand{\R}{\mathbb R}
\def \vsp {\vspace{3pt}}
\begin{document}
\
\vspace{-1cm}

\title[Bilinear dispersive estimates]{Bilinear dispersive estimates via space time resonances. Part II: dimensions two and three}

\date{\today}

\author[F. Bernicot]{Fr\'ed\'eric Bernicot}
\address{F. Bernicot, CNRS - Universit\'e de Nantes \\ Laboratoire Jean Leray \\ 2, rue de la Houssini\`ere \\
44322 Nantes cedex 3, France }
\email{frederic.bernicot@univ-nantes.fr}

\author[P. Germain]{Pierre Germain}
\address{P. Germain, Courant Institute of Mathematical Sciences \\ 251 Mercer Street \\ New York 10012-1185 NY \\ USA}
\email{pgermain@cims.nyu.edu}

\subjclass[2000]{Primary 42B20 ; 37L50}
\keywords{Space-time resonances ; bilinear dispersive estimates.}
\thanks{P. Germain is partially supported by NSF grant DMS-1101269, a start-up grant from the Courant Institute, and a Sloan fellowship.}
\thanks{F. Bernicot is partially supported by the ANR under the project AFoMEN no. 2011-JS01-001-01.}

\begin{abstract}
Consider a bilinear interaction between two linear dispersive waves with a generic resonant structure (roughly speaking, space and time resonant sets intersect transversally). We derive an asymptotic equivalent of the solution for data in the Schwartz class, and bilinear dispersive estimates for data in weighted Lebesgue spaces. An application to water waves with infinite depth, gravity and surface tension is also presented.
\end{abstract}
\maketitle

\begin{quote}
\footnotesize\tableofcontents
\end{quote}

\section{Introduction}

\subsection{The set up}

\label{goeland}
We focus on the physically relevant dimensions 
$$
d = 2 \; \mbox{or} \; 3,
$$
and we will be interested in the solution $u$ of
\begin{equation}
\label{equationdebase}
\left\{ \begin{array}{l} 
i \partial_t u + a(D) u = T_m(v,w) \\
i \partial_t v + b(D) v = 0 \\
i \partial_t w + c(D) w = 0
\end{array} \right.
\qquad \mbox{with} \qquad
\left\{ \begin{array}{l}
u(t=0) = 0 \\
v(t=0) = f \\
w(t=0) = g.
\end{array} \right.
\end{equation}
We need to say more about the various characters appearing in the above equations. First, 
$u$, $v$ and $w$ are complex-valued functions of $(t,x) \in \mathbb{R} \times \mathbb{R}^d$.
Next, $T_m$ is the pseudo-product operator with symbol $m \in \mathcal{C}_0^\infty$ given by
$$
T_m(f,g)(x) \overset{def}{=} \int_{\mathbb{R}^d} \int_{\mathbb{R}^d} e^{ix \xi} m(\xi,\eta) \widehat{f}(\eta) \widehat{g}(\xi-\eta)\,d\eta\,d\xi.
$$
(this bilinear operator has the effect of restricting the study to bounded frequencies). 

Finally, $a(D)$, $b(D)$ and $c(D)$ are the Fourier multipliers with (smooth) symbols $a(\xi)$, $b(\xi)$ and $c(\xi)$ respectively.
These symbols will be throughout this text assumed to be real and such that 
\begin{equation}
\label{pingouin}
\operatorname{Hess} [a](\xi), \operatorname{Hess} [b](\xi), \operatorname{Hess} [c](\xi) \qquad \mbox{are non-degenerate for any $\xi$}
\end{equation}
(this could be relaxed in several places, but for the sake of simplicity in the statements we prefer to maintain this assumption).
This ensures a dispersive behavior for the corresponding equations: denoting $S(t)$ for any of the groups $e^{ita(D)}$, $e^{itb(D)}$,
$e^{itc(D)}$, the following dispersive estimate holds:
\begin{equation}
\label{dispersive}
\left\| S(t) f \right\|_{L^{p'}} \lesssim |t|^{\frac{1}{2}-\frac{1}{p}} \|f\|_{L^p} \qquad \mbox{for $p \in [1,2]$}
\end{equation}
(provided say $f$ has compact support in frequency).

\medskip

The question that we address in this paper is:
{\bf what are the decay properties of $u$? How do they depend on the localization of $f$ and $g$?
How do they depend on the resonant structure of the problem?}

\subsection{Space-time resonances}  \subsubsection{A quick presentation}

Using Duhamel's formula, $u(t,\cdot)$ is given by the following bilinear operator $u(t,\cdot)=T_t(f,g)$ with
\begin{equation}
\label{duhamel}
T_t(f,g)(x) =  \int_0^t \int \int e^{ix\xi} e^{ita(\xi)} e^{is\phi(\xi,\eta)} m(\xi,\eta) \widehat{f}(\eta) \widehat{g}(\xi-\eta)\,d\xi\,d\eta\,ds
\end{equation}
or, to put it in a more concise form,
$$ T_t(f,g)\overset{def}{=} e^{ita(D)} \int_0^t T_{m e^{is\phi}} (f,g) ds, $$
where 
$$\phi(\xi,\eta)\overset{def}{=} -a(\xi)+b(\eta)+c(\xi-\eta).$$
The goal of this article is thus to understand the behavior for large time $t\gg 1$ and exponents $q\in[2,\infty]$ of
$\left\| T_t(f,g) \right\|_{L^q}$, with $f,g$ in weighted $L^2$ spaces.

Viewing this double integral as a stationary phase problem, it becomes clear that the sets where the phase is stationary in $s$ , respectively $\eta$,
\begin{align}
& \mathcal{T} \overset{def}{=} \{(\xi,\eta) \:\mbox{such that}\;\phi(\xi,\eta) = 0\} \qquad \mbox{(time resonant set)} \\
& \mathcal{S} \overset{def}{=} \{(\xi,\eta) \:\mbox{such that}\; \nabla_\eta \phi(\xi,\eta) = 0\} \qquad \mbox{(space resonant set)}
\end{align}
will play a crucial role. Even more important is their intersection
$$
\mathcal{R} \overset{def}{=} \mathcal{T} \cap \mathcal{S} \qquad \mbox{(space-time resonant set)}.
$$
We refer to~\cite{PG} for a more comprehensive presentation of the space-time resonance method and its application to the study of PDEs.

\subsubsection{The case of a single dispersion relation} To make the previous discussion more concrete, we first examine the case where only one dispersion relation enters the problem, and describe their space-time resonant structure. We simplify even more, by assuming (only in the present subsection) that the dispersion relation is isotropic, thus of the form
$$
\mbox{for $\xi \in \mathbb{R}^d$}, \qquad \tau(\xi) = \bar \tau(|\xi|),
$$
and satisfies $\tau(0)=0$ as well as $\bar \tau'>0$. This covers many fundamental physical cases: to name a few, Schr\"odinger ($|\xi|^2$), Airy ($|\xi|^3$), waves ($|\xi|$), and, as we will see, water-waves. The wave interacts with itself or its conjugate (which has dispersion relation $-\tau$). Applying the framework described above, this gives several possible phase functions:
$$
\phi_{\pm,\pm} \overset{def}{=} \bar \tau(|\xi|)\pm \bar \tau(|\eta|) \pm \bar \tau(|\xi-\eta|) ,
$$
whose space, time, and space-time resonant sets will be respectively denoted by $\mathcal{S}_{\pm,\pm}$, $\mathcal{T}_{\pm,\pm}$ and $\mathcal{R}_{\pm,\pm}$.

\bigskip

\noindent \underline{The case $\bar \tau''>0$ or $\bar \tau''<0$}, that is $\bar \tau$ convex or concave. It is then easy to see that
\begin{align*}
& \mathcal{R}_{++} = \mathcal{T}_{++} = \{ (\xi,\eta) = (0,0) \}, \quad \mathcal{S}_{++} = \{ \xi = 2 \eta \}\\
& \mathcal{R}_{+-}  = \mathcal{S}_{+-} =  \{ \xi = 0 \}, \\
&  \mathcal{S}_{--} = \{ \xi = 2 \eta \}.
\end{align*}
To go further, we need to distinguish between $\bar \tau''>0$ and $\bar \tau''<0$ (i.e. the concave and convex cases)
\begin{itemize}
\item If $\bar \tau$ is concave,
\begin{align*}
& \mathcal{T}_{--} = \{ \eta = 0 \mbox{ or } \xi-\eta = 0\}, \quad \mathcal{S}_{--} = \{ \xi = 2 \eta \}, \quad \mathcal{R}_{--} = \{ (0,0) \} \\
& \mathcal{T}_{+-} = \{ \xi-\eta = 0 \mbox{ or } \xi =0 \} , \quad \mathcal{S}_{+-} =  \mathcal{R}_{+-} = \{ \xi = 0 \}.
\end{align*}
\item If $\bar \tau$ is convex, there is no simple expression for the space resonant sets in general. However,
\begin{align*}
& \mathcal{S}_{--} = \{ \xi = 2 \eta \}, \quad \mathcal{R}_{--} = \{ (0,0) \} \\
& \mathcal{S}_{+-} =  \mathcal{R}_{+-} = \{ \xi = 0 \}.
\end{align*}\end{itemize}
To summarize: if $\bar \tau$ is concave or convex, space-time resonant sets are linear; this makes it (relatively) easy to analyze the decay properties for the solution of (\ref{goeland}).

\bigskip

\noindent \underline{The case $\bar \tau''=0$}, that is, $\bar \tau$ linear. This case is very degenerate: space, time, and space-time resonance reduce to colinearity conditions on $\xi$, $\eta$ and $\xi-\eta$.

\bigskip

\noindent \underline{The generic case}. First notice that the assumption~(\ref{pingouin}) is not satisfied at frequencies $\xi_0$ such that
$\bar \tau''(|\xi_0|) = 0$. In particular, at this frequency, the dispersive estimates~(\ref{dispersive}) do not hold any more. The analysis presented in this paper is only valid away from the points where $\bar \tau''$ vanishes; a different kind of analysis is then necessary. 

Without making any convexity assumption on $\bar \tau$, the time resonant sets cannot be described simply in general, but more can be said about the space-time resonant sets. Without going into the details, they are generically either of the above type (linear) or of the form
$$
\{ |\xi|=R, \; \eta = \lambda \xi \}
$$
for constants $R$ and $\lambda$. This means in particular that the space-time resonant set does not have a linear structure; this is well-known to make questions of harmonic analysis more challenging. The results proved in the present paper apply to this delicate case.

\subsubsection{The case of water waves} Water waves constitute one of the fundamental dispersive equations. The classical setting involves three parameters: the gravity $g\geq 0$, the surface tension $\sigma \geq 0$, and the water depth $h \in [0,\infty]$. We shall assume that $h=\infty$. The dispersion relation for the linearized equations around a flat free surface, the fluid having zero velocity, reads then
$$
\tau(\xi) = \bar \tau(|\xi|) \quad \mbox{with} \quad \bar \tau(s) = \sqrt{gs + \sigma s^3}.
$$
Besides its physical significance, this problem is particularly interesting since different values of $g$ and $\sigma$ yield different convexity properties of $\tau$:
\begin{itemize}
\item If $\sigma = 0$ and $g>0$, $\bar \tau$ is concave.
\item If $g=0$ and $\sigma>0$, $\bar \tau$ is convex.
\item If $g,\sigma>0$, $\tau$ is concave-convex (see Subsection \ref{subsec:alpha}) with an inflection point at $s=\lambda_0>0$ such that $\sigma \lambda_0^2=\frac{(2\sqrt{3}-3)}{3} g$.
\end{itemize}

\subsubsection{Generic space-time resonant structure}
The answer to the question raised at the end of Section~\ref{goeland} depends crucially on $\mathcal{R}$, 
and more precisely how $\mathcal{S}$ and $\mathcal{T}$ intersect: what is the dimension of $\mathcal{R}$ ? Is the intersection transverse ? etc... 
Since we cannot investigate all the possibilities, we will mostly focus on the generic situation.

First, it is possible that $\mathcal{S}$ and $\mathcal{T}$ do not intersect at all; this trivial configuration is easily understood.

If $\mathcal{R} \neq 0$, we will often (but not always) impose the following generic assumptions:

\begin{itemize}

\item[(A1)] \vsp $\operatorname{Hess}_\eta [\phi]$ is non-degenerate on $\mathcal{R}$;

\medskip

\item[(A2)] $\nabla_\xi \phi \neq 0  \;\mbox{on $\mathcal{R}$}$.

\medskip

\item[(A3)] \vsp for all $\sigma\in[0,1]$, $\operatorname{Hess}_\xi [a(\xi)+\sigma \phi(\xi,\eta)]$ or $\operatorname{Hess}_{(\xi,\eta)} [a(\xi)+\sigma \phi(\xi,\eta)]$ is non-degenerate on $\mathcal{R}$.

\end{itemize}

\begin{remark} It is easy to check that the conditions (A1) and (A2) imply that $\mathcal{R}$ is a smooth $(d-1)$-dimensional manifold. \end{remark}

The meaning of condition (A1) is easily understood: this is asking that the phase $\phi$ be non-degenerate in $\eta$ on $\mathcal{R}$ (it is already stationary in $\eta$ by definition of the space-resonant set). This idea was already introduced in the paper by Ionescu and Pausader~\cite{IPa} to
obtain global solutions for a nonlinear dispersive problem (coupled quasilinear Klein-Gordon equations).

The meaning of condition (A2) is fairly clear. Notice that we will sometimes need the stronger assumption that $(\nabla_\xi \phi)^t (\operatorname{Hess} [a])^{-1} (\nabla_\xi \phi) \neq 0$, which reduces to $\nabla \phi \neq 0$ if $a$ is convex or concave.

The meaning of condition (A3) is more technical : it allows us to localize the difficulty along two different kinds of non-stationary phases. As will be observed in the proofs, the main term is the one corresponding to $\sigma$ close to $0$, and in this situation condition (A3) is always satisfied, because of (\ref{pingouin}). We refer the reader to the Appendix (Section \ref{appendice}) where we prove that Assumption (A3) is generically satisfied, at least in the context of radial dispersive relations.

Finally, notice that the non-degeneracy conditions (A1) and (A2) also appeared in dimension 1 in the previous paper by the authors \cite{BG}.

\subsection{Previous results} The concept of space-time resonances was introduced by Germain, Masmoudi and Shatah~\cite{GMS01, GMS02} to deal with nonlinear Schr\"odinger equations. It was then successfully applied by the same authors to the three-dimensional water-waves equation~\cite{GMS1, GMS2}, see also Wu~\cite{Wu}. Very recently, related ideas were brought to bear on the more difficult one-dimensional water-waves equation by Ionescu and Pusateri~\cite{IPu} and Alazard and Delort~\cite{AD1, AD2}.

All the works that have been mentioned so far deal with linear space-time resonant sets; we now review articles which deal with situations where the space-time resonant set is of the type $\{|\xi|=R,\eta= \lambda \xi\}$ (which is generic in the isotropic setting). 
Global existence, in dimension 3, for a semilinear dispersive system with such a space-time resonant set was first proved by Germain \cite{PG1}; this was then applied to the one-fluid Euler-Maxwell equations by Germain and Masmoudi~\cite{PGNM}. The previous analysis was  improved by Ionescu and Pausader \cite{IPa}, which ultimately led Guo, Ionescu and Pausader \cite{GIP} to prove global existence for the two-fluid Euler-Maxwell equations.

To the best of our knowledge, there is no work which deals with generic space-time resonant sets in dimension 2. In dimension 1, we mention the previous work by the authors of the present paper~\cite{BG}, where the same question as here is asked and answered.

Finally, in a different direction, Masmoudi and Nakanishi~\cite{MN} initiated the study of space-time resonances through limiting equations describing solutions concentrated in Fourier around some frequencies.

\subsection{Main results and organisation of the paper}

\subsubsection{Asymptotic equivalent for the solution} In Section~\ref{sectionequivalentu}, we mainly focus on the case $d=2$ (the case $d=3$ being trivial) and we are able to derive precise informations on the asymptotic form of $u$, solution of (\ref{equationdebase}), if $f$ and $g$ decay sufficiently fast. In order to sketch the main result, define $\mathcal{R}_X$ to be the set of $X \in \mathbb{R}^2$ such that
there exists $(\xi,\eta) \in \mathcal{R}$ with $X = - \nabla a (\xi)$.  Then if (A1) and (A2) hold, 
\begin{equation}
\label{pie}
u(t,x) \mbox{ is of order } \left\{ \begin{array}{l} \frac{\operatorname{log} t}{t} \mbox{ if } \frac{x}{t} \in \mathcal{R}_X \\ \frac{1}{t} \mbox{ otherwise.} \end{array} \right.
\end{equation}
The precise statement is Theorem \ref{theoremeasymptotique}.

\subsubsection{Asymptotic equivalent for the profile} While $u$ is the physical solution of \eqref{equationdebase}, it has been recognized in the mathematical works that were cited above that a lot of the mathematical structure can be read off the {\it profile}
$$
h(t) \overset{def}{=} e^{-ita(D)} u(t).
$$
Section~\ref{sectionequivalenth} is dedicated to its study, which is best done in Fourier space. It is easy to see formally that
$$
\mbox{as $t \to \infty$}, \qquad \widehat{h}(t,\xi) \longrightarrow \widehat{h}_{\infty}(\xi) \overset{def}{=} - \int \frac{1}{i \phi(\xi,\eta)} m(\xi,\eta) \widehat{f}(\eta) \widehat{g}(\xi-\eta) \, d\eta.
$$
In Theorem \ref{theoremeasymptotiqueh}, we establish the regularity of $\widehat{h_\infty}$, and the speed of convergence of $\widehat{h}$ to it.

\subsubsection{Bilinear dispersive bounds} The aim of Section~\ref{sectionbounds} is to quantify how much decay of the data (at space infinity) is necessary to obtain decay of the solution (at time infinity). The obtained estimates are as follows:

\begin{theorem} \label{thm:main} 
Assume that $d \in \{2,3\}$, $m$ is smooth and compactly supported, (\ref{pingouin}) holds, $\mathcal{R}$ is $(d-1)$-dimensional and that furthermore on $\mathcal{R}$ (A1), (A2) and (A3) hold.
Then the solution $u$ of~(\ref{equationdebase}) satisfies for every $\epsilon>0$, 
\begin{equation}
\begin{split}
& \mbox{if $d=2$,} \quad \|u(t)\|_{L^\infty} \lesssim t^{-1} \log t \|f\|_{L^{2,\frac{3}{2}+\epsilon}} \|g\|_{L^{2,\frac{3}{2}+\epsilon}} \\
& \mbox{if $d=3$,} \quad \|u(t)\|_{L^\infty} \lesssim t^{-\frac{3}{2}} \|f\|_{L^{2,\frac{3}{2}+\epsilon}} \|g\|_{L^{2,\frac{3}{2}+\epsilon}}.
\end{split}
\end{equation}
\end{theorem}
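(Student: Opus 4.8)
The plan is to analyze $T_t(f,g) = e^{ita(D)} \int_0^t T_{m e^{is\phi}}(f,g)\,ds$ by decomposing the $(\xi,\eta)$ frequency space according to the distance to the space, time, and space-time resonant sets, and to estimate each piece by a combination of integration by parts (in $s$ away from $\mathcal{T}$, in $\eta$ away from $\mathcal{S}$) and stationary/non-stationary phase in $x$ (via the dispersive estimate \eqref{dispersive}, recast in an $L^2 \to L^\infty$ form with weights). The weighted norm $\|f\|_{L^{2,3/2+\epsilon}}$ should be read as controlling roughly $3/2+\epsilon$ derivatives of $\widehat f$ in $L^2$; these derivatives are exactly what one produces when integrating by parts in $\eta$ (each $\partial_\eta$ either hits the phase, producing growth in $s$, or hits $\widehat f(\eta)\widehat g(\xi-\eta)$, producing a weight), and the threshold $3/2+\epsilon$ is what makes the resulting $s$-integrals converge after balancing against the dispersive decay.

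First I would set up a partition of unity $1 = \sum_j \chi_j(\xi,\eta)$ where $\chi_0$ localizes to a neighborhood of $\mathcal{R}$ of scale $2^{-j_0}$ to be optimized, and the remaining pieces are dyadically localized at distance $\sim 2^{-j}$ either from $\mathcal{T}$ (but away from $\mathcal{S}$), from $\mathcal{S}$ (but away from $\mathcal{T}$), or from neither. Away from $\mathcal{T}$, integrate by parts in $s$: the boundary term at $s=0$ vanishes and the boundary term at $s=t$ contributes the "profile at infinity" type expression, while the bulk term gains $|\phi|^{-1}$ but at the cost of differentiating $m e^{is\phi}$, which is harmless since $\phi$ is smooth and we are away from its zero set. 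Away from $\mathcal{S}$, integrate by parts in $\eta$ against $e^{is\phi}$: each step gains $(s|\nabla_\eta\phi|)^{-1}$, and we iterate $\lceil 3/2+\epsilon\rceil$ times (interpolating to get the fractional count via a Littlewood–Paley / fractional-integration argument), spending the $\eta$-regularity of $\widehat f \widehat g$ encoded in the weighted norms. For the piece near $\mathcal{R}$, both of these fail, so one instead uses the dispersive estimate in $x$: here condition (A1) guarantees $\operatorname{Hess}_\eta\phi$ is non-degenerate so one can apply stationary phase in $\eta$ (gaining $s^{-d/2}$ near the space-resonant part of $\mathcal{R}$), condition (A2) gives a usable $\xi$-gradient of $\phi$ so the $s$-integral over the small window where $|\phi| \lesssim 2^{-j_0}$ has length $\sim 2^{-j_0}$ (or, combined with (A3), the output is itself dispersive in $x$ with a non-degenerate Hessian in $\xi$ or $(\xi,\eta)$, yielding the $t^{-1}$ resp. $t^{-3/2}$ decay), and the $\log t$ in $d=2$ emerges from summing the geometric-type series in $j$ down to the optimal scale $2^{-j_0}\sim 1/t$.

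I would organize the bound in $d=3$ first since the target $t^{-3/2}$ is just the full dispersive rate with no logarithmic loss, so the resonant window can be handled more crudely; then $d=2$, where the competition between the $2^{-j_0}$-length $s$-window near $\mathcal{R}$ and the dispersive gain forces the optimization that produces $\log t$. Throughout, condition (A3) is invoked precisely to control the $\sigma$-family of phases $a(\xi)+\sigma\phi(\xi,\eta)$ that appears when one tries to push the $x$-dispersion through the $s$-integral rather than treating it as a trivial $|s|\le t$ bound: it ensures the relevant Hessian never degenerates as the interpolation parameter $\sigma$ runs over $[0,1]$, so the near-$\mathcal{R}$ contribution genuinely disperses.

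The main obstacle I expect is the estimate on the space-time resonant piece, and in particular making the two-parameter optimization (scale $2^{-j_0}$ near $\mathcal{R}$ versus number of $\eta$-integrations by parts versus the $s$-window length) produce exactly the weight $3/2+\epsilon$ and exactly one logarithm in $d=2$ — this is where the geometry of $\mathcal{R}$ as a transverse intersection (a $(d-1)$-manifold) and the full strength of (A1)–(A3) all have to be used simultaneously, and where a naive decomposition loses either a power of $t$ or an unbounded number of logarithms. A secondary technical point is handling the fractional regularity $3/2+\epsilon$ rather than an integer number of integrations by parts, which I would do by a standard but slightly delicate Littlewood–Paley argument on the profiles $\widehat f$, $\widehat g$.
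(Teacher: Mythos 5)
Your outline reproduces the general space--time resonance philosophy (split according to the resonant geometry, integrate out $s$ away from $\mathcal{T}$, integrate by parts in $\eta$ away from $\mathcal{S}$, use dispersion near $\mathcal{R}$, invoke (A3) for the $\sigma$-family $a+\sigma\phi$), and in broad outline this parallels the paper's splitting $m=m_1+m_2$ into a piece supported where $2|\phi|\geq|\nabla\phi|$ (handled by integrating out $s$) and a piece supported where $|\phi|\leq 2|\nabla\phi|$ (handled by dispersion in $\xi$). But the decisive steps are missing, and the one concrete mechanism you do specify would fail at the stated regularity. With $f,g\in L^{2,\frac{3}{2}+\epsilon}$ the profiles $\widehat f,\widehat g$ have only $H^{3/2+\epsilon}$-type regularity, so you cannot run a stationary phase argument in $\eta$ with amplitude $\widehat f(\eta)\widehat g(\xi-\eta)m$, nor iterate $\eta$-integrations by parts ``spending'' $\lceil 3/2+\epsilon\rceil$ derivatives pointwise: each such step requires pointwise control of derivatives of the amplitude that weighted $L^2$ data do not provide. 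This is precisely why the paper never places the data inside a stationary phase: near $\mathcal{R}$ it applies stationary phase only in $\xi$, where the amplitude consists of the smooth symbol and cutoffs and the non-degeneracy comes from \eqref{pingouin} for $\sigma<\epsilon_0$ (with (A3) covering $\sigma\in[\epsilon_0,1]$), while the data enter only through the factors $e^{-iy\eta}e^{-iz\xi}$ in the bilinear kernel $K(y,z)$, which is then estimated pointwise up to a mild growth $(1+|y|+|z|)^\epsilon$, except for one residual piece treated as a genuine bilinear operator.

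The second gap is the summation over dyadic scales, which you correctly identify as the main obstacle but leave unresolved. On the space-resonant piece, after dividing by $\phi$ the symbol on the slab $|\phi|\sim 2^j$ has size $2^{-j}$, and summability requires converting the narrowness of the support into a gain in the \emph{bilinear operator norm}; the paper does this with a $TT^{*}$ argument (Lemma \ref{lem:1}, giving $\|T_{\chi_{\Omega^j_1}}\|_{L^2\times L^\infty\to L^\infty}\lesssim 2^{j(d+1)/4}$) combined with the projection bound $|F_j|\lesssim 2^j$; no analogue appears in your plan. On the time-resonant piece, the paper's proof is a multi-parameter kernel decomposition (dyadic scale $2^j$ in $|\nabla_\eta\phi|$, a cutoff $\chi\bigl((\nabla_\xi\psi-t^{-1}z)/r\bigr)$ localizing $\xi$ near the $z$-dependent stationary point, a splitting of the $\sigma$-integral at $\rho(t,j)$, and the residual operator $\mathcal{E}$ estimated via $L^2\times L^2\to L^1$ bounds for the localized symbols plus dispersive estimates and interpolation), and it is exactly this machinery that produces a single logarithm in $d=2$ and the clean $t^{-3/2}$ in $d=3$. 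Note also that the threshold $\frac32+\epsilon$ does not arise, as you suggest, as a count of $\eta$-integrations by parts: in the paper it comes from the embedding $L^{2,s}\subset L^1$ (needing $s>\frac d2$) together with, in $d=2$, the extra weight $x_pf$ generated by the boundary term of a single $\eta$-integration by parts in the $\mathcal{E}$ piece. So the proposal is a reasonable roadmap, but the core of the proof --- the bilinear-operator and kernel-level estimates that make it work at $L^{2,\frac32+\epsilon}$ regularity --- is not there, and the near-$\mathcal{R}$ step as you describe it would not go through.
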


\begin{remark}
\begin{itemize}
\item First observe that the time decay which is obtained is optimal. This is clear in dimension 3, since it coincides with the linear decay; in dimension 2, this follows from~(\ref{pie}).
\item The above statement requires $f$ and $g$ to belong to $L^{2,s}$, with $s>\frac{3}{2}$. It is not clear whether this condition can be relaxed or not. In any case, in dimension 3, this seems optimal in the following sense: $s>\frac{3}{2}$ is the condition needed to obtain optimal decay in the linear problem. Thus, when solving the Cauchy problem for a nonlinear dispersive equation, this has to be assumed on the data in order to get optimal decay in $L^\infty$.
\item It would be easy to generalize the above result to $d \geq 4$ to get
$$ \|u(t)\|_{\infty} \lesssim t^{-\frac{d}{2}} \|f\|_{L^{2,\frac{d}{2}+\epsilon}} \|g\|_{L^{2,\frac{d}{2}+\epsilon}}$$
(indeed, the higher the dimension, the stronger the decay, and thus the easier the proofs).
\item It is interesting to compare this theorem to the results in the papers by Ionescu and Pausader~\cite{IPa}, as well as Guo, Ionescu and Pausader~\cite{GIP}. A byproduct of the analysis performed in these works is to obtain, in our language, in dimension $3$ an integrable decay ($\sim \frac{1}{t^{1+\delta}}$, $\delta>0$) for data in $L^{2,1+\kappa}$, with $\kappa>0$. This result as well as the method used were sources of inspiration for the present article.
\end{itemize}
\end{remark}

\subsubsection{Application to water waves}  Finally, in Section~\ref{sectionww}, we apply the previous results to the water wave equation with gravity and surface tension and deduce information on its large-time behavior: we conjecture what decay small solutions should exhibit. This decay depends on the frequency range under consideration.

\subsection{Notations} 
\begin{itemize}
\item The Fourier transform of a function $f$ on $\mathbb{R}^d$ is defined by
$$
\mathcal{F} f (\xi) = \frac{1}{(2\pi)^{d/2}} \widehat{f}(\xi) \overset{def}{=} \int_{\mathbb{R}^d} f(x) e^{-ix\cdot \xi}\,d\xi.
$$
so that $f \mapsto \widehat{f}$ is an isometry of $L^2(\R^d)$ and
$$
f(x) = \frac{1}{(2\pi)^{d/2}}  \int_{\mathbb{R}^d} \widehat{f}(\xi) e^{ix\cdot \xi}\,dx.
$$
\item Linear Fourier multipliers are given by the formula
$$
m(D) f \overset{def}{=}  \mathcal{F}^{-1} \left[ m(\xi) \widehat{f}(\xi) \right].
$$
\item Bilinear Fourier multipliers are given by the formula
$$
T_m(f,g)(x) \overset{def}{=} \int_{\mathbb{R}^d} \int_{\mathbb{R}^d} e^{ix \xi} m(\xi,\eta) \widehat{f}(\eta) \widehat{g}(\xi-\eta)\,d\eta\,d\xi
$$
so that $T_{\frac{1}{(2\pi)^{d/2}}}(f,g) = fg$.
\item The Hessian of a (real) function on $\mathbb{R}^d$ is the matrix
$$
\operatorname{Hess} [f] \overset{def}{=}  ( \partial_i \partial_j f)_{1 \leq i,j \leq d}.
$$
\item The weighted $L^2$ space $L^{2,s}$ is given by the norm
$$
\|f\|_{L^{2,s}} \overset{def}{=} \left\| \langle x \rangle^s f(x) \right\|_{L^2}
$$
\item We write $A \lesssim B$ if there exists a constant $C$ such that $A \leq CB$ and $A \sim B$ if $A \lesssim B$ and $B \lesssim A$.
\item We write $a=O(b)$ if $|a| \lesssim |b|$.
\item Finally, $R$ is the (direct) rotation of angle $\frac{\pi}{2}$, center the origin.
\end{itemize}

\section{Asymptotic equivalent for $u$}

\label{sectionequivalentu}

\subsection{Linear vs nonlinear behavior}

We assume in this section for simplicity that $f$ and $g$ belong to the Schwartz class, though 
much weaker assumptions would suffice. We focus on $u$ the solution of (\ref{equationdebase}).

A crucial question in nonlinear PDE analysis is whether $u$ simply behaves like a linear solution of
the problem, or if genuinely nonlinear phenomena can be observed. Recall that linear solutions
obey the dispersive estimates~(\ref{dispersive}). A more precise characterization of the asymptotic
behavior of linear solutions can be obtained by the stationary phase method (see for instance~\cite{Wolff}):
$$
e^{ita(D)} F(x) = e^{i(x \xi_0 + t a(\xi_0))} e^{i\frac{\pi}{4} \sigma} 
\frac{1}{|\operatorname{det} \operatorname{Hess} [a] (\xi_0) |^{1/2}} \frac{1}{t^{d/2}} \widehat{F}(\xi_0)
+ O \left( \frac{1}{t^{\frac{d}{2}+1}} \right)
$$
where $\xi_0$ is such that
$$
\nabla a (\xi_0) \overset{def}{=} -\frac{x}{t} \qquad \mbox{and} \qquad 
\sigma \overset{def}{=} \operatorname{signature} \operatorname{Hess} [a ](\xi_0).
$$

In the absence of space or time resonances, $u$ resembles as expected a solution of the linear problem.
But what if the space-time resonant set is generic? Do the decay rates agree with~(\ref{dispersive})?
\begin{itemize}
\item In dimension 1, this was investigated in~\cite{BG}, and the answer is, emphatically, no: for the one-dimensional version of~(\ref{equationdebase}), the $L^2$ norm of $u$ grows like $\log t$ instead of being constant, and the $L^\infty$ norm decays like $\frac{1}{t^{1/4}}$ instead of $\frac{1}{\sqrt{t}}$.
\item In dimension $2$, we will only observe a logarithmic discrepancy for $p=\infty$: see Theorem~\ref{theoremeasymptotique} and its corollary below.
\item In dimension $3$, the decay rates in $L^p$ agree for the linear problem and the solution of~(\ref{equationdebase}). This is very easily proved by a stationary phase argument, and left to the reader.
\end{itemize}

Thus we will focus in the remainder of this section on the case of dimension 2.

\subsection{Main result}

\begin{theorem}
\label{theoremeasymptotique}
Assume that $d=2$, and that $f$ and $g$ in~(\ref{equationdebase}) belong to the Schwartz class $\mathcal{S}$.
Consider $u$ the solution of~(\ref{equationdebase}). 
\begin{itemize}
\item[(i)] In the absence of time resonances ($\mathcal{T} \cap \operatorname{Supp} m = \emptyset$), 
$$
u(t) = - e^{ita(D)} F + O \left( \frac{1}{t^2} \right) \quad \mbox{with} \quad F = T_{\frac{m}{i \phi}}(f,g).
$$
\item[(ii)] In the absence of space resonances ($\mathcal{S} \cap \operatorname{Supp} m = \emptyset$), for any $M>0$, $N \in \mathbb{N}$, 
$$
u(t) = e^{ita(D)} F_M + O \left( \frac{1}{M^N t} \right),
$$
where
$$
F_M = \int_0^M e^{-isa(D)} T_m(e^{isb(D)}f,e^{isc(D)} g) \,ds.
$$
Thus for $t>M$, $e^{ita(D)} F_M$ is simply the solution of 
$$\left\{ \begin{array}{ll} 
u(t=0) = 0 \vsp \\
 i\partial_t u + a(D) u = T_m(v,w) & \mbox{if $t<M$} \vsp \\ 
i\partial_t u + a(D) u = 0 & \mbox{if $t>M$}. \vsp \end{array} \right.$$
\item[(iii)] Assume that $\mathcal{R} \neq 0$ and that on $\mathcal{R}$ (A1), (A2) and (A3) hold; and furthermore that
\begin{equation}
\label{pivert}
(\nabla_\xi \phi)^t (\operatorname{Hess} [a])^{-1} (\nabla_\xi \phi) \neq 0.
\end{equation}
Define $\mathcal{R}_X$ to be the set of $\bar X \in \mathbb{R}^2$ such that $\bar X = -\nabla a(\bar \xi)$ for some $(\xi,\eta) \in \mathcal{R} \cap \operatorname{Supp} m$. Taking $\operatorname{Supp} m$ sufficiently small (which can always be achieved with the help of smooth cut-off functions), and $\delta>0$ sufficiently small, there exists a compact set $K \subset \mathbb{R}^2$ containing $\mathcal{R}_X$ such that the map
\begin{align*}
\Phi : \;& \mathcal{R}_X \times [-\delta,\delta]  \rightarrow K \\
& (\bar X,\mu)  \mapsto \bar X + \mu \nabla_\xi \phi(\bar \xi,\bar \eta),
\end{align*}
where $(\bar \xi, \bar \eta) \in \mathcal{R} \cap \operatorname{Supp} m$, and $\bar X = -\nabla a(\bar \xi) \in \mathcal{R}_X$, is a smooth diffeomorphism. Let
$$
X \overset{def}{=} \frac{x}{t}.
$$
Assume that $X \in K$ and write it in the above coordinates
$$
X = \Phi (\bar X, \mu) = \bar X + \mu \nabla_\xi \phi(\bar \xi,\bar \eta)
$$
Then
$$
u(t,x) = \left\{ \begin{array}{ll} 
e^{it Y(X)}  Z(X) \frac{\log t}{t} + O \left( \frac{1}{t} \right) & \mbox{if $|\mu| \lesssim \frac{1}{\sqrt{t}}$} \\
e^{it Y(X)} Z(X) \frac{\log \langle \mu \rangle}{t} + O \left( \frac{1}{t} \right) & \mbox{if $|\mu| \gtrsim \frac{1}{\sqrt{t}}$}
\end{array} \right.
$$
for smooth functions $Y$ and $Z$ which depend on $a$, $b$, $c$, $f$, $g$, and $m$.
Consider $X_0$ in $\mathcal{R}_X$, thus there exists $(\xi_0,\eta_0) \in \mathcal{R}$ such that $-\nabla a (\xi_0) = X_0$. If $\widehat{f}(\eta_0) \widehat{g}(\xi_0-\eta_0) m(\xi_0,\eta_0) \neq 0$, then $Y$ and $Z$ are not zero close to $X_0$.
\end{itemize}
\end{theorem}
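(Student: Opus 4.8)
The strategy is to evaluate the triple integral \eqref{duhamel} by stationary phase, of increasing delicacy as one passes from (i) to (iii). For (i), since $|\phi|$ is bounded below on $\operatorname{Supp} m$, I integrate by parts once in $s$ via $\int_0^t e^{is\phi}\,ds=\frac{e^{it\phi}-1}{i\phi}$: the $-1$ produces exactly $-e^{ita(D)}T_{m/(i\phi)}(f,g)=-e^{ita(D)}F$, while the $e^{it\phi}$ term, after absorbing $e^{ita(\xi)}$, equals $T_{m/(i\phi)}(e^{itb(D)}f,e^{itc(D)}g)$; as $m/(i\phi)\in\mathcal{C}_0^\infty$ this bilinear operator has a Schwartz convolution kernel, hence maps $L^\infty\times L^\infty\to L^\infty$, and \eqref{dispersive} for $b(D),c(D)$ (each factor $O(t^{-1})$) gives $O(t^{-2})$. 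For (ii), split $\int_0^t=\int_0^M+\int_M^t$; the identity $T_{me^{is\phi}}(f,g)=e^{-isa(D)}T_m(e^{isb(D)}f,e^{isc(D)}g)$ identifies the first piece with $e^{ita(D)}F_M$, and on the tail the hypothesis $\nabla_\eta\phi\neq0$ on $\operatorname{Supp} m$ permits $N$ integrations by parts in $\eta$ against $e^{is\phi}$, each gaining a power $s^{-1}$; combining this with the $O(t^{-1})$ decay of $e^{ita(D)}$ (stationary phase in $\xi$ for $s\le t/2$, the crude bound $s^{-N}\le t^{-N}$ for $s\ge t/2$) yields the error $O(M^{-N}t^{-1})$.

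For (iii) set $\sigma=s/t\in[0,1]$ and $X=x/t$, so that $u(t,x)=t\int_0^1\!\int\!\int e^{it(X\xi+a(\xi)+\sigma\phi(\xi,\eta))}m(\xi,\eta)\,\widehat f(\eta)\widehat g(\xi-\eta)\,d\xi\,d\eta\,d\sigma$. After a partition of unity localizing $m$ near a single point of $\mathcal R$ (the part of $m$ supported away from $\mathcal R$ falls under (i) or (ii) and contributes $O(t^{-1})$), the region $\sigma\lesssim t^{-1}$ contributes $e^{ita(D)}$ of a fixed Schwartz function, hence $O(t^{-1})$ by \eqref{dispersive}. On $\sigma\gtrsim t^{-1}$ I perform stationary phase in $\eta$ with large parameter $t\sigma$: by (A1) there is a unique critical point $\eta_*(\xi)$ on $\mathcal S\cap\operatorname{Supp} m$, producing a factor $(t\sigma)^{-1}$, a smooth amplitude $A(\xi)\propto m\,\widehat f\,\widehat g\,|\det\operatorname{Hess}_\eta\phi|^{-1/2}$ evaluated at $\eta_*(\xi)$, and the reduced phase $\psi(\xi):=\phi(\xi,\eta_*(\xi))$, which by (A2) satisfies $\nabla\psi=\nabla_\xi\phi\neq0$ on $\Sigma:=\{\psi=0\}$, the smooth $\xi$-shadow of $\mathcal R$, so $\psi$ vanishes simply there.

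Keeping $\sigma\psi(\xi)$ inside the phase, I next run stationary phase in $\xi$ with parameter $t$ for $X\xi+a(\xi)+\sigma\psi(\xi)$: assumption (A3) guarantees that $\operatorname{Hess}_\xi[a+\sigma\psi]$ (or, in the alternative regime, the joint Hessian) is non-degenerate for every $\sigma\in[0,1]$, so there is a smooth critical point $\xi_\sigma$, equal at $\sigma=0$ to $\xi_*(X)$ (solving $\nabla a(\xi_*)=-X$), and another factor $t^{-1}$; placing $\sigma\psi$ in the phase rather than the amplitude keeps all remainders $O(t^{-1})$ (in fact $O(t^{-2}\log t)$ away from $\sigma\simeq0$). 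This reduces $u$ to $\frac{c}{t}\,e^{itY(X)}\int_{1/t}^1\sigma^{-1}\widetilde A(\sigma)e^{it(\Theta(\sigma)-\Theta(0))}\,d\sigma+O(t^{-1})$, with $Y(X)=\Theta(0)=X\xi_*(X)+a(\xi_*(X))$ and $\Theta(\sigma)=[X\xi+a(\xi)+\sigma\psi(\xi)]_{\xi=\xi_\sigma}$. The envelope identity gives $\Theta'(\sigma)=\psi(\xi_\sigma)$, with $\psi(\xi_*(X))\approx c_0\mu$ and $\Theta''(0)=c_0$ where $c_0:=-(\nabla_\xi\phi)^t(\operatorname{Hess}[a])^{-1}(\nabla_\xi\phi)\neq0$ by \eqref{pivert}; hence $\Theta(\sigma)-\Theta(0)=c_0\mu\sigma+\tfrac{c_0}{2}\sigma^2+O(\sigma^3+\mu\sigma^2)$, and $\sigma=s/t$ turns the remaining integral into $\int_1^t s^{-1}e^{i(c_0\mu s+c_0s^2/(2t))}\widetilde A(s/t)\,ds$. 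Splitting at $s\sim\min(|\mu|^{-1},\sqrt t)$ — below this the phase is $O(1)$, above it one integration by parts gives $O(1)$ — yields the logarithmic factors asserted in the statement, the transition occurring precisely at $|\mu|\sim t^{-1/2}$, with $Z(X)\propto\widetilde A(0)=A(\xi_*(X))$ up to a nonzero stationary-phase constant. Finally, since $\xi_*(X_0)=\xi_0$ and $\eta_*(\xi_0)=\eta_0$, one gets $Z(X_0)\neq0$ whenever $\widehat f(\eta_0)\widehat g(\xi_0-\eta_0)m(\xi_0,\eta_0)\neq0$; moreover $\nabla_X Y=\xi_*$ does not vanish near $X_0$, so $Y$ is non-constant there, and smoothness of $Y,Z$ is built into the construction.

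The principal obstacle is exactly the neighborhood of $\mathcal R$, where one can integrate by parts neither in $s$ (the time phase $\phi$ vanishes) nor in $\eta$ (the space gradient $\nabla_\eta\phi$ vanishes), forcing the two stationary phases to be run in tandem and demanding control of: (a) the uniform validity of the $\eta$-stationary phase as $\sigma\to0$, where the factor $(t\sigma)^{-1}$ degenerates — handled by the cutoff $\sigma\sim t^{-1}$ together with the dispersive bound below it; (b) the $\sigma$-dependence of the $\xi$-critical point $\xi_\sigma$, which is precisely what (A3) secures over the full range $\sigma\in[0,1]$; and (c) the mixed linear--quadratic phase of the final one-dimensional $s$-integral, whose non-degeneracy — and hence the $t^{-1/2}$ threshold — rests on \eqref{pivert}. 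A secondary bookkeeping point, easy to get wrong, is that $\sigma\psi$ must be carried in the phase of the $\xi$-stationary phase (not the amplitude), or the remainder estimate fails to beat the main $t^{-1}\log t$ term.
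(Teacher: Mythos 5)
Your parts (i) and (ii) follow the paper's route and are essentially fine (in (ii) note that your ``stationary phase in $\xi$ for $s\le t/2$'' is not justified as stated, since the $\xi$-phase is $x\xi+ta(\xi)+s\phi$ and $\operatorname{Hess}[a]+\tfrac{s}{t}\operatorname{Hess}_\xi[\phi]$ is only guaranteed non-degenerate for $s/t$ small; the easy fix is to split at $s=\epsilon t$ and, on $[\epsilon t,t]$, just do one or two extra integrations by parts in $\eta$, each now worth $(\epsilon t)^{-1}$). The genuine gap is in (iii), and it comes from the order in which you run the two stationary phases. You do the $\eta$-stationary phase first, with large parameter $t\sigma$, and then treat its output as ``main term $(t\sigma)^{-1}A(\xi)e^{it\sigma\psi(\xi)}$ plus acceptable remainder.'' But the $\eta$-remainder is only $O((t\sigma)^{-2})$ (or $O((t\sigma)^{-J})$ with more terms), uniformly in $\xi$, and once you have discarded its oscillation in $\xi$ you cannot recover anything from the $\xi$-integral. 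Multiplying by the overall prefactor $t$ and integrating over $\sigma\in[1/t,\epsilon]$ gives
\begin{equation*}
t\int_{1/t}^{\epsilon}(t\sigma)^{-J}\,d\sigma \sim 1 \qquad \mbox{for every } J\ge 2,
\end{equation*}
because the integral is dominated by $t\sigma=O(1)$; this $O(1)$ swamps both your claimed $O(1/t)$ error and the $\frac{\log t}{t}$ main term. Your proposed cure --- cutting at $\sigma\sim t^{-1}$ and using the dispersive bound below --- only covers $t\sigma\lesssim 1$, not the range $1\ll t\sigma\ll t$, and your (correct) remark that $\sigma\psi$ must stay in the phase of the $\xi$-step applies to the main term, not to this remainder. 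The paper closes the budget precisely by reversing the order: first the stationary phase in $\xi$ with phase $X\xi+a(\xi)+\sigma\phi(\xi,\eta)$, whose Hessian $\operatorname{Hess}[a]+\sigma\operatorname{Hess}_\xi[\phi]$ is non-degenerate for $\sigma\le\epsilon$, giving a main term that absorbs the prefactor $t$ and an error $O(t^{-2})$ uniform in $(\eta,\sigma)$; then the stationary phase in $\eta$ for the reduced phase, whose Hessian is $\sigma Z$ with $Z$ non-degenerate by (A1), so the error is $O(\sigma^{-2}t^{-2})$, which integrates over $[1/t,\epsilon]$ to $O(t^{-1})$. Any attempt to salvage your ordering (keeping the $\eta$-remainder in oscillatory-integral form and then integrating in $\xi$) effectively reproduces this reversal.

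A second, related issue is your use of (A3) for $\sigma\in[\epsilon,1]$. (A3) is a dichotomy: for each $\sigma$, either $\operatorname{Hess}_\xi[a+\sigma\phi]$ or $\operatorname{Hess}_{(\xi,\eta)}[a+\sigma\phi]$ is non-degenerate on $\mathcal{R}$. After eliminating $\eta$ as you do, the Hessian of your reduced phase $a+\sigma\psi$ is the Schur complement, whose non-degeneracy is (given (A1), $\sigma\neq 0$) equivalent to that of the joint Hessian; so in the alternative where only $\operatorname{Hess}_\xi[a+\sigma\phi]$ is non-degenerate, your $\xi$-critical point $\xi_\sigma$ need not exist smoothly and the claimed factor $t^{-1}$ is not available in your scheme. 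The paper instead splits $[\epsilon_0,1]=I\cup J$: on $I$ it runs a joint $(\xi,\eta)$-stationary phase (parameter $t$, gain $t^{-2}$, hence $O(t^{-1})$ after the prefactor), and on $J$ it reruns the $\xi$-first analysis, which is now harmless since the $1/\sigma$ singularity is absent. Your one-dimensional endgame (envelope identity $\Theta'(\sigma)=\psi(\xi_\sigma)$, $\Theta''(0)=-(\nabla_\xi\phi)^t(\operatorname{Hess}[a])^{-1}\nabla_\xi\phi\neq0$ via \eqref{pivert}, critical value at $\sigma_0\approx-\mu$, and the splitting of the $\sigma$-integral at $\min(|\mu|^{-1},\sqrt{t})$) does match the paper's Step 5 and Lemma \ref{chardonneret}, including the identification of $Z$ and the non-vanishing statement; but the theorem's asserted $O(1/t)$ error cannot be reached until the two points above are repaired.
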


This theorem immediately implies the following corollary.

\begin{corollary}
Under the assumptions of $(iii)$ in the above theorem, and assuming that $\widehat{f}(\eta_0) \widehat{g}(\xi_0-\eta_0) m(\xi_0,\eta_0) \neq 0$ for some $(\xi_0,\eta_0) \in \mathcal{R}$, there holds for $t$ sufficiently large
\begin{align*}
&\|u(t) \|_{L^\infty} \sim \frac{\log t}{t} \\
& \|u(t) \|_{L^p} \sim \frac{1}{t^{1-\frac{2}{p}}} \qquad \mbox{if 2 $\leq p < \infty$}.
\end{align*}
\end{corollary}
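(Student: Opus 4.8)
The plan is to deduce the corollary directly from part (iii) of Theorem~\ref{theoremeasymptotique} by integrating the pointwise asymptotics over $x$, so the whole argument is a change of variables plus bookkeeping of the contribution of the set $|\mu|\lesssim t^{-1/2}$ versus $|\mu|\gtrsim t^{-1/2}$. First I would record the $L^\infty$ statement: since $\widehat f(\eta_0)\widehat g(\xi_0-\eta_0)m(\xi_0,\eta_0)\neq 0$ for some $(\xi_0,\eta_0)\in\mathcal R$, Theorem~\ref{theoremeasymptotique}(iii) says that near $X_0=-\nabla a(\xi_0)$ the amplitude $Z$ is nonzero; taking $X=x/t$ with $x/t$ in a small neighbourhood of $X_0$ and in the regime $|\mu|\lesssim t^{-1/2}$ gives $|u(t,x)|\sim \frac{\log t}{t}$, hence $\|u(t)\|_{L^\infty}\gtrsim \frac{\log t}{t}$; the matching upper bound $\|u(t)\|_{L^\infty}\lesssim\frac{\log t}{t}$ is exactly Theorem~\ref{thm:main} for $d=2$ (alternatively, it follows from the two displayed cases in (iii) together with the bound on the region $X\notin K$, which is non-resonant and hence $O(1/t)$ by the stationary-phase analysis of cases (i)--(ii)).

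For the $L^p$ statement with $2\le p<\infty$ I would compute $\int_{\mathbb R^2}|u(t,x)|^p\,dx$ by splitting $\mathbb R^2$ into three pieces: the region $X=x/t\notin K$, the region $X\in K$ with $|\mu|\lesssim t^{-1/2}$, and the region $X\in K$ with $|\mu|\gtrsim t^{-1/2}$. On the first region $u=O(1/t)$ uniformly (non-resonant, by (i)--(ii) localized via cut-offs), and since $X$ ranges over a bounded set $x$ ranges over a ball of radius $\sim t$, contributing $\lesssim t^2\cdot t^{-p}=t^{2-p}$ to the $L^p$-power, i.e. $\lesssim t^{(2-p)/p}=t^{-(1-2/p)}$ to the norm — which is exactly the claimed size. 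On the second region, using the diffeomorphism $\Phi$ of (iii) the variables are $(\bar X,\mu)$ with $\bar X$ on the $1$-dimensional manifold $\mathcal R_X$ and $|\mu|\lesssim t^{-1/2}$, so in $x=tX$ coordinates this region has measure $\lesssim t^2\cdot t^{-1/2}=t^{3/2}$, while $|u|\lesssim \frac{\log t}{t}$ there; its contribution to the $L^p$-power is $\lesssim t^{3/2}\big(\frac{\log t}{t}\big)^p$, giving a norm contribution $\lesssim t^{3/(2p)}\frac{\log t}{t}=t^{-1+3/(2p)}\log t$, which for $p<\infty$ is $o(t^{-(1-2/p)})$ since $3/(2p)<1+2/p$ hmm — more carefully one checks $-1+\tfrac{3}{2p}<-(1-\tfrac2p)\iff \tfrac{3}{2p}<\tfrac2p$, true — so this piece is negligible. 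The third region, $|\mu|\gtrsim t^{-1/2}$, is the main term: there $|u(t,x)|\sim |Z(X)|\frac{\log\langle\mu\rangle}{t}$ on the set where $Z\neq0$, and changing variables $x\mapsto (\bar X,\mu)$ with Jacobian $\sim t^2$ gives
\begin{equation*}
\int |u(t,x)|^p\,dx \sim \frac{t^2}{t^p}\int_{\mathcal R_X}\int_{t^{-1/2}\lesssim|\mu|\lesssim\delta} |Z|^p (\log\langle\mu\rangle)^p \,d\mu\,d\sigma(\bar X).
\end{equation*}
Since $p$ is finite the inner integral $\int_{t^{-1/2}}^{\delta}(\log\langle\mu\rangle)^p\,d\mu$ converges to a positive constant as $t\to\infty$ (the integrand is bounded and continuous near $\mu=0$), so the whole expression is $\sim t^{2-p}$, i.e. $\|u(t)\|_{L^p}\sim t^{-(1-2/p)}$, matching the first region and establishing both the upper and lower bound.

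The one point requiring care — and the main obstacle — is making the lower bound rigorous: the pointwise asymptotics in (iii) contain the error term $O(1/t)$, and on most of the resonant region $|u|$ is only $\sim\frac1t$, the same order as the error, so one cannot read off $|u|\sim\frac1t$ pointwise there. The fix is that the lower bound for $\|u\|_{L^p}$ does not need the full resonant region: it already follows from the first region $X\notin K$ (where the non-resonant stationary-phase expansion gives a genuine leading term of size $\sim 1/t$ on a set of measure $\sim t^2$, not merely an upper bound), or equivalently from restricting to a fixed compact set of $X$ bounded away from $\mathcal R_X$ where the leading-order linear-type term $e^{ita(D)}F$ is nonzero and of size exactly $1/t$. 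Thus I would prove the lower bound $\|u(t)\|_{L^p}\gtrsim t^{-(1-2/p)}$ from the non-resonant part via the stationary phase formula recalled at the start of Section~\ref{sectionequivalentu} applied to $F=T_{m/i\phi}(f,g)$ (which is Schwartz and not identically zero for generic $f,g$), and the upper bound from combining cases (i), (ii), (iii) as above; the $L^\infty$ claim is the only place the $\log t$ genuinely appears, and there the resonant leading term $Z\frac{\log t}{t}$ dominates the $O(1/t)$ error for $t$ large, so no such subtlety arises.
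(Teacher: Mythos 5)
Your $L^\infty$ argument and your $L^p$ \emph{upper} bound are fine and are essentially what the authors have in mind (the paper gives no written proof — the corollary is declared an immediate consequence of Theorem~\ref{theoremeasymptotique}); the decomposition into $X\notin K$, $|\mu|\lesssim t^{-1/2}$ and $|\mu|\gtrsim t^{-1/2}$, with the thin logarithmic region of $x$-measure $\sim t^{3/2}$ absorbing the $\log t$, is the right bookkeeping, and the bound $|u|\lesssim 1/t$ off $K$ (with rapid decay for $|x|\gg t$) is true, though it comes from rerunning the non-stationary phase steps of the proof of (iii) rather than from the statements of (i)--(ii) as you suggest.

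The genuine gap is the lower bound $\|u(t)\|_{L^p}\gtrsim t^{-(1-2/p)}$ for $2\le p<\infty$. You correctly note that the resonant region cannot produce it (it yields at most $\sim t^{-1+3/(2p)}\log t\ll t^{-1+2/p}$) and that (iii) only determines $u$ up to $O(1/t)$, so the bulk gives no pointwise lower bound; but your fix does not close this. First, in the situation of (iii) the function $F=T_{m/i\phi}(f,g)$ is \emph{not} Schwartz: since $\mathcal T\cap\operatorname{Supp}m\neq\emptyset$, $\widehat F$ coincides (up to constants) with $-\widehat{h_\infty}$, which by Theorem~\ref{theoremeasymptotiqueh}(iii) has a logarithmic singularity on $\mathcal O=\pi_\xi\mathcal R$, so $h_\infty$ decays only like $|x|^{-3/2}$; moreover the identification of $e^{ita(D)}F$ (equivalently $e^{ita(D)}h_\infty$) as the leading term of $u$ on the non-resonant bulk, with error $o(1/t)$ there, is nowhere in Theorem~\ref{theoremeasymptotique} — it requires the profile analysis of Section~\ref{sectionequivalenth} together with a quantitative convergence $h(t)\to h_\infty$. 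Second, and more seriously, you only assert that the bulk amplitude is nonzero ``for generic $f,g$'', whereas the corollary assumes only $\widehat f(\eta_0)\widehat g(\xi_0-\eta_0)m(\xi_0,\eta_0)\neq0$; as written you prove a weaker, genericity-conditioned statement. A repair within the stated hypotheses is available — the nonvanishing at $(\xi_0,\eta_0)$ forces, via \eqref{pinson2}, a logarithmic blow-up of $\widehat{h_\infty}$ as $\xi\to\mathcal O$ near $\xi_0$, hence $\widehat{h_\infty}\neq0$ on an open set of frequencies, and transporting this through $e^{ita(D)}$ to the open set of directions $X=-\nabla a(\xi)$ would give $|u|\gtrsim 1/t$ on an $x$-set of measure $\sim t^2$ — but that estimate (nonvanishing leading coefficient plus $o(1/t)$ error control on the bulk) is precisely what still has to be proved, and it is missing from your argument.
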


\begin{remark}
\begin{enumerate}
\item An exact formula for $Y$ and $Z$ can be obtained by examining the proof of the theorem. However, it seems too complicated to be really illuminating, and we skip it here.
\item The $\log t$ correction to the $L^\infty$ decay is particularly relevant for energy estimates. Namely, it is well-known that a decay of $\frac{1}{t}$ leads in general for a quasilinear quadratic equation to a growth of Sobolev norms $\lesssim t^{C\epsilon}$, where $\epsilon$ is the size of the data. A decay of $\frac{\log t}{t}$ on the other hand would give an upper bound growing faster than any polynomial.
\item If~(\ref{pivert}) does not hold on $\mathcal{R}$, the proof of the theorem still gives a decay $\lesssim \frac{\log t}{t}$, but to compute an equivalent further information would be needed.
\end{enumerate}
\end{remark}

\subsection{Proof of $(i)$}
It is almost trivial: it suffices to observe that one can integrate out in $s$ in the formula giving $u$ to obtain
$$
u(t) = e^{ita(D)} \int_0^t T_{me^{is\phi}} (f,g)\,ds = T_{\frac{m}{i \phi}}(e^{itb(D)} f,e^{itc(D)} g) - e^{ita(D)} T_{\frac{m}{i\phi}} (f,g)
$$
and then that the first term can be dominated in $L^\infty$ by $\sim \frac{1}{t^2}$. Of course, $\frac{m}{\phi}$ is smooth since $\phi$ does
not vanish on the support of $m$.

\subsection{Proof of $(ii)$} It is good at this point to give to our problem a more standard stationary phase formulation. Changing variables 
by setting $X = \frac{x}{t}$ and $\sigma = \frac{s}{t}$ in~(\ref{duhamel}), it appears that
$$
u(t) = t \int_0^1 \iint e^{it \psi(\xi,\eta,\sigma,X)} \widehat{f}(\eta) \widehat{g}(\xi-\eta) m(\xi,\eta) \,d\eta \,d\xi \,d\sigma
$$
where
$$
\psi(\xi,\eta,\sigma,X) \overset{def}{=} a(\xi) + \sigma \phi(\xi,\eta) + X \xi.
$$
Now split the time integral giving $u$ as follows:
$$
u(t) = t \int_0^{M/t} \dots \,d\sigma + t \int_{M/t}^\epsilon \dots \,d\sigma + t \int_\epsilon^1 \dots \,d\sigma \overset{def}{=} I + II + III,
$$
where $\epsilon>0$ is small and will be fixed shortly.
The first piece, $I$, gives $e^{ita(D)} F$ as in the statement of (ii), and the third one, $III$, can be treated as in Step 3 of the proof of Theorem \ref{thm:K2} where it is proved that the main term is the one corresponding to $\sigma$ close to $0$ (more precisely, using Assumption (A3) the term $III$ can be split into two quantities one similar to $II$ and another one which is easily bounded).
Thus it suffices to check that the second one, $II$, can be made sufficiently small.
The first step in this direction is to apply the stationary phase lemma in $\xi$ to $II$. The Hessian of $\psi$ reads
$$
\operatorname{Hess}_\xi [\psi] = \operatorname{Hess} [a] + \sigma \operatorname{Hess}_\xi [\phi].
$$
By assumption, $\operatorname{Hess} [a]$ is not degenerate, thus $\operatorname{Hess}_\xi [\psi]$ will not be either for $\sigma$ small enough;
we make sure that $\epsilon>0$ is chosen such that this is indeed the case. One can then apply the stationary phase lemma (in $\xi$) to $II$:
denoting $\Xi(\eta,\sigma,X)$ for the point where $\nabla_\xi \psi$ vanishes, one obtains
$$
II = t \int_{M/t}^\epsilon \int e^{it\psi(\Xi,\eta,\sigma,X)} 
\left[ \frac{\alpha(\eta,\sigma)}{t} + \frac{\beta(\eta,\sigma)}{t^2} + O\left( \frac{1}{t^3} \right) \right] \,d\eta \,d\sigma \overset{def}{=} II_1 + II_2 + II_3,
$$
where $\alpha$ and $\beta$ are smooth functions of $(\eta,\sigma)$. The term $II_3$ is immediately seen to be good enough
for our purposes, so it suffices to consider $II_1$ and $II_2$; these two terms can be treated similarly, thus we focus
on $II_1$. Matters now reduce to showing that $II_1$ is $O \left( \frac{1}{M^N t} \right)$.

To prove this, we integrate by parts repeatedly using the identity 
\begin{equation}
\label{IBP} 
\frac{1}{i t \partial_{\eta^j} \psi} \partial_{\eta^j} e^{it\psi} = e^{it\psi}
\end{equation}
(choosing $j$ so that $|\partial_{\eta^j} \psi| \sim |\nabla_\eta \psi|$, adding cut-off functions if need be). Observe that
$$
\nabla_\eta  \left[ \psi(\Xi,\eta,\sigma,X) \right] = \underbrace{\nabla_\xi \psi}_{= 0} \nabla_\eta \Xi + \nabla_\eta \psi(\Xi,\eta,\sigma,X)
= \sigma \nabla_\eta \phi(\Xi,\eta).
$$
By assumption, $\nabla_\eta \phi$ does not vanish on $\operatorname{Supp} m$; this implies $|\nabla_\eta  \left[ \psi(\Xi,\eta,\sigma,X) \right]| \sim \sigma$.
Therefore, $N+1$ integrations by parts using~(\ref{IBP}) give
$$
|II_1| \lesssim \int_{M/t}^\epsilon \frac{d\sigma}{(t\sigma)^{N+1}} \lesssim \frac{1}{M^N t},
$$
which is the desired result!

\subsection{Proof of $(iii)$}  \label{subsec:iii} \underline{Step 0: initial decomposition}.
As in the proof of $(ii)$, we write
\begin{equation*}
\begin{split}
u(t) & = t \int_0^1 \iint e^{it \psi(\xi,\eta,\sigma,X)} \widehat{f}(\eta) \widehat{g}(\xi-\eta) m(\xi,\eta) \,d\eta \,d\xi \,d\sigma \\
& = t \int_0^{1/t} \dots + t \int_{1/t}^\epsilon \dots + t \int_\epsilon^1 \\
& \overset{def}{=} I + II + III
\end{split}
\end{equation*}
where
$$
\psi(\xi,\eta,\sigma,X) = a(\xi) + \sigma \phi(\xi,\eta) + X \xi 
$$
and the small constant $\epsilon>0$ will be fixed in the body of the proof. The term $I$ gives a linear contribution, therefore it is $O\left( \frac{1}{t} \right)$. The term $III$ can be treated as in Step 3 of the proof of Theorem \ref{thm:K2} where it is proved that the main term is the one corresponding to $\sigma$ close to $0$.  Therefore, we only need to focus on $II$. 

\bigskip
\noindent
\underline{Step 1: stationary phase in $\xi$}. Stationary points satisfy
\begin{equation}
\label{albatros}
\nabla_\xi \psi(\xi,\eta,\sigma,X) = \nabla_\xi a(\xi) + \sigma \nabla_\xi \phi(\xi,\eta) + X = 0,
\end{equation}
whereas the Hessian of $\psi$ is given by
$$
\operatorname{Hess}_\xi [\psi] = \operatorname{Hess} [a] + \sigma \operatorname{Hess}_\xi [\phi].
$$
For $\sigma= 0$, it is non-degenerate by Assumption~(\ref{pingouin}); choosing $\epsilon$ small enough, this remains true for $\sigma<\epsilon$.
Restricting to a small enough domain in $(\xi,\eta)$, which is always possible by adding a cutoff function if necessary, the implicit function theorem implies that, for given $(\sigma, \eta,X)$ the stationary point is unique, and given by a smooth function $\Xi(\eta,\sigma,X)$. Furthermore, the stationary phase lemma gives that
\begin{equation*}
\begin{split}
II & = t \int_{1/t}^\epsilon \int e^{it \psi(\Xi,\eta,\sigma,X)} \left( \frac{\alpha(\eta,\sigma,X)}{t} 
+ O \left( \frac{1}{t^2} \right) \right) \widehat{f}(\eta) \widehat{g}(\Xi-\eta) \,d\eta \,d\sigma \\
& =\int_{1/t}^\epsilon \int e^{it \psi(\Xi,\eta,\sigma,X)} \alpha(\eta,\sigma,X)
\widehat{f}(\eta) \widehat{g}(\Xi-\eta) \,d\eta \,d\sigma + O \left( \frac{1}{t} \right)\\
\end{split}
\end{equation*}
where $\alpha$ is a smooth function (an exact expression is known, but we ignore it to keep notations under control).

\bigskip
\noindent
\underline{Step 2: stationary phase in $\eta$}. This time, we want to apply the stationary phase lemma in $\eta$. Observe that
$$ \nabla_\eta \left[ \psi(\Xi,\eta,\sigma,X) \right] = \underbrace{\nabla_\xi \psi}_{=0} \nabla_\eta \Xi(\eta,\sigma,X) + \nabla_\eta \psi(\Xi,\eta,\sigma,X) = \sigma \nabla_\eta \phi(\Xi,\eta). $$
Hence
$$ \operatorname{Hess}_\eta \left[ \psi(\Xi,\eta,\sigma,X) \right] = \sigma \nabla_\xi \nabla_\eta \phi(\Xi,\eta) \nabla_\eta \Xi + \sigma \operatorname{Hess}_\eta \phi(\Xi,\eta,\sigma,X) 
\overset{def}{=} \sigma Z(\eta,\sigma,X). $$
Now observe that if $\sigma = 0$, differentiating~(\ref{albatros}) implies that
$$
\nabla_\eta \Xi (\eta,\sigma=0,X)= 0.
$$
Therefore,
$$
Z(\eta,\sigma=0,X) = \operatorname{Hess}_\eta \phi(\Xi,\eta),
$$
which is by assumption non-degenerate. Choosing $\epsilon$ small enough, this remains true for $Z(\eta,\sigma,X)$
if $\sigma<\epsilon$. We will now denote $H(\sigma,X)$ the stationary point $\eta$ for which $\left. \nabla_\eta \left[ \psi(\Xi(\eta,\sigma,X),\eta,\sigma,X) \right] \right|_{\eta=H}=0$.
Applying the stationary phase lemma in $\eta$ gives
\begin{equation}
\label{cormoran} 
\begin{split}
II & = \int_{1/t}^\epsilon e^{it \psi(\Xi,H,\sigma,X)} \left( \frac{\beta(\sigma,X)}{\sigma t} + O \left( \frac{1}{\sigma^2 t^2} \right) 
\right) \widehat{f}(H) \widehat{g}(\Xi-H) m(\Xi,H) \,d\sigma + O \left( \frac{1}{t} \right) \\
& = \int_{1/t}^\epsilon e^{it \psi(\Xi,H,\sigma,X)} \frac{\beta(\sigma,X)}{\sigma t}\widehat{f}(H) \widehat{g}(\Xi-H) m(\Xi,H) \,d\sigma + O \left( \frac{1}{t} \right),
\end{split}
\end{equation}
where $\beta$ is a smooth function.

\bigskip
\noindent
\underline{Step 3: the sets $\mathcal{R}$ and $\mathcal{R}_X$}. We first claim that the set $\mathcal{R}$ is given
by a smooth curve in $\mathbb{R}^4$. Indeed, it is the zero set of
$$
(\xi,\eta) \mapsto F(\xi,\eta) \overset{def}{=} (\phi(\xi,\eta),\nabla_\eta \phi(\xi,\eta))
$$
and it is easy to see that, at any point of $\mathcal{R}$, the kernel of the gradient of this map is one dimensional as
soon as $\nabla_\xi \phi$ is non zero, and $\operatorname{Hess}_\eta [\phi]$ is non degenerate, which we assume.

Thus it is possible to parameterize locally this set by $s\mapsto (\bar \xi(s),\bar \eta(s))$, a smooth map from, say, $[0,1]$ to $\mathbb{R}^4$ with a non-vanishing gradient.
Under this parameterization, $\partial_s \bar \xi$ does not vanish. Indeed, on $\mathcal{R}$,
$$
0 = \partial_s F(\bar \xi,\bar \eta) = (\nabla_\xi \phi \cdot \partial_s \bar \xi,\operatorname{Hess}_\eta [\phi] \partial_s \bar \eta + \nabla_\xi \nabla_\eta \phi \partial_s \bar \xi),
$$
so that $\partial_s \bar \xi = 0$ implies $\partial_s \bar \eta = 0$, which is not possible. Notice that the above identity also implies that
$$
\partial_s \bar \xi(s) = \lambda(s) R \nabla_\xi \phi \qquad \mbox{for some $\lambda(s) \in \mathbb{R}\setminus \{0\}$}
$$
(recall that $R$ is the rotation of angle $\frac{\pi}{2}$ around the origin).

Next, let $\mathcal{R}_X$ be the image of $\mathcal{R} \cap \operatorname{Supp} m$ by the map $(\xi,\eta) \mapsto -\nabla a (\xi)$. Thus $\mathcal{R}_X$ can be parameterized by $s \mapsto \bar X(s) \overset{def}{=} -\nabla a(\bar \xi(s))$.
Observe that
$$
\partial_s \bar X(s) = - \operatorname{Hess} [a] \, \partial_s \bar \xi = - \lambda \operatorname{Hess}[ a ] \,R \nabla_\xi \phi.
$$
This implies that $\mathcal{R}_X$ is a smooth curve in $\mathbb{R}^2$, whose tangent is along $\operatorname{Hess} [a]\, R \nabla_\xi \phi$.
An equivalent characterization is of course to define $\mathcal{R}_X$ as the set of points $X \in \mathbb{R}^2$ such that 
\begin{equation*}
\left\{ \begin{array}{l} \nabla a(\xi) + X = 0 \\ \nabla_\eta \phi(\xi,\eta) = 0 \\ \phi(\xi,\eta) = 0 \end{array} \right.
\end{equation*}
for some point $(\xi,\eta) \in \mathcal{R} \cap \operatorname{Supp} m$.

\bigskip
\noindent
\underline{Step 4: new coordinates close to $\mathcal{R}_X$}. Given $X$ sufficiently close to $\mathcal{R}_X$, we can decompose it as
$$
X = \bar X(s) + \mu \nabla_\xi \phi(\bar \xi(s),\bar \eta(s)) \qquad \mbox{for unique $s$ and $\mu$}.
$$
This is possible since $\partial_s \bar X(s) = - \lambda \operatorname{Hess}[ a ] R \nabla_\xi \phi$ and $\mu \nabla_\xi \phi(\bar \xi(s),\bar \eta(s))$ are not colinear, namely
$$
(R \nabla_\xi \phi)^t \operatorname{Hess} [a] (R \nabla_\xi \phi) \neq 0.
$$
This last assertion follows from the assumption~(\ref{pivert}) that $(\nabla_\xi \phi)^t (\operatorname{Hess} [a])^{-1} (\nabla_\xi \phi) \neq 0$ and the identity, valid for a 2 by 2 symmetric, non degenerate matrix $M$:
$$
M^{-1} = \frac{1}{\operatorname{det}M} R^{-1} M R.
$$
We call $\Phi$ the corresponding application
$$
\Phi (\bar X,\mu) = \bar X + \mu \nabla_\xi \phi(\bar \xi,\bar \eta).
$$
Choosing $\epsilon$ small enough, it is a smooth diffeomorphism from $\mathcal{R}_X \times [-\epsilon,\epsilon]$ to a compact $K$.

\bigskip
\noindent
\underline{Step 5: an oscillatory integral in $\sigma$}. We now come back to Step 2 and recall that
\begin{equation}
\label{loriot}
II = \int_{1/t}^\epsilon e^{it \zeta_X(\sigma)} \frac{\beta(\sigma,X)}{\sigma t}\widehat{f}(H) \widehat{g}(\Xi-H)
m(\Xi,H) \,d\sigma + O \left( \frac{1}{t} \right).
\end{equation}
where we denoted $\zeta_X$ for the phase function
$$
\zeta_X(\sigma) \overset{def}{=} \psi(\Xi(H,\sigma,X),H(\sigma,X),\sigma,X).
$$
We would like to apply once again a stationary phase argument, thus we need to compute $\zeta_X''$ and make sure that it does not vanish.
\begin{align*}
& \zeta_X'(\sigma) = \partial_\sigma [\psi(\Xi,H,\sigma,X)] = \underbrace{\nabla_\xi \psi}_{=0} \partial_\sigma \Xi + \underbrace{\nabla_\eta \psi}_{=0} \partial_\sigma H + \partial_\sigma \psi = \phi(\Xi,H) \\
& \implies \zeta_X''(\sigma) = \partial_\sigma^2 [\psi(\Xi,H,\sigma,X)] = \nabla_\xi \phi \cdot \partial_\sigma \Xi + \underbrace{\nabla_\eta \phi}_{=0} \cdot \partial_\sigma H = \nabla_\xi \phi \cdot \partial_\sigma \Xi.
\end{align*}
Differentiating~(\ref{albatros}) at $\sigma = 0$ yields
$$
\left. \partial_\sigma \Xi \right|_{\sigma = 0} = - (\operatorname{Hess}[ a])^{-1} \nabla_\xi \phi,
$$
which gives, combined with the above, that
$$
\zeta_X''(0) = \left. \partial_\sigma^2 [\psi(\Xi,H,\sigma,X)] \right|_{\sigma=0} = - (\nabla_\xi \phi)^t (\operatorname{Hess} [a])^{-1} \nabla_\xi \phi,
$$
which is by assumption non zero; picking $\epsilon$ small enough, we obtain that $\zeta_X''(\sigma) \neq 0$ for $\sigma<\epsilon$.

Next, we need to estimate  the value of $\sigma_0$ such that $\zeta_X'(\sigma_0) = 0$. Since we already know $\zeta_X''(0)$,
it suffices to estimate $\zeta_X'(0)$. Using the coordinates which were defined in Step 4, and expanding in $\mu$,
\begin{equation*}
\begin{split}
\zeta_X'(0) & = \phi(\Xi(X,0),H(X,0)) \\
& = \underbrace{\phi(\Xi(\bar X(s),0), H(\bar X(s),0))}_{=0} + \underbrace{(\nabla_\eta \phi)^t}_{=0} \nabla_X H (X-\bar X(s)) 
+ (\nabla_\xi \phi)^t \nabla_X \Xi (X - \bar X(s)) \\
& \qquad \qquad \qquad + O((X-\bar X(s))^2) \\
& = - \mu (\nabla_\xi \phi)^t (\operatorname{Hess} [a])^{-1} \nabla_\xi \phi + O(\mu^2),
\end{split}
\end{equation*}
where we used in the last equality that $\nabla_X \Xi(\sigma=0) = - (\operatorname{Hess}[ a])^{-1}$ which follows from differentiating~\eqref{albatros} in $X$. We already saw that 
$$
\zeta_X''(0) = - (\nabla_\xi \phi)^t (\operatorname{Hess} [a])^{-1} \nabla_\xi \phi.
$$
Therefore
$$
\sigma_0 = - \mu + O (\mu^2),
$$
and
$$
\alpha \overset{def}{=} \operatorname{sign}(-\sigma_0) \sqrt{|\zeta(0)-\zeta(\sigma_0)|} = C_0 \mu + O(\mu^2) 
$$
for a constant $C_0$ depending on $\zeta$.

The proof of the theorem follows now from applying the following lemma to~(\ref{loriot}).

\begin{lemma}
\label{chardonneret}
Assume that $\chi$ is a smooth, compactly supported function, and $\zeta$ a smooth function such that $\zeta'''$ is bounded from above, $\zeta''$ is bounded from below and $\zeta'$ is vanishing 
at $\sigma_0$:
$$
\zeta'' \geq c > 0  \quad \mbox{and} \quad \zeta'(\sigma_0) = 0.
$$
Then
$$
\int_{1/t}^\infty \frac{e^{it\zeta(\sigma)}}{\sigma} \chi(\sigma)\,d\sigma = 
\left\{ \begin{array}{ll} \chi(0) e^{it\zeta(0)} \log t + O(1) & \mbox{if $|\alpha|\lesssim \frac{1}{\sqrt t}$} \\
\chi(0) e^{it\zeta(0)} F(\alpha) + O(1) & \mbox{if $\alpha \gtrsim \frac{1}{\sqrt t}$} \\
\chi(0) e^{-it\zeta(0)} \bar F(- \alpha) + O(1) & \mbox{if $\alpha \lesssim - \frac{1}{\sqrt t}$} \\
\end{array} \right.
$$
where
$$
F(z) \overset{def}{=} \int_z^\infty \frac{e^{i\tau}}{\tau}\,d\tau
$$
is such that $F(z) = - \log z + O(1)$, and
$$
 \alpha \overset{def}{=} \operatorname{sign}(-\sigma_0) \sqrt{\zeta(0)-\zeta(\sigma_0)}.
$$
\end{lemma}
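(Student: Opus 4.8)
\noindent
The plan is to separate the contribution of the frequency $\sigma=0$ — where the weight $\frac1\sigma$ is singular and where the lower cut‑off $\frac1t$ sits — from everything else, which should cost only $O(1)$. First I would fix a smooth compactly supported $\chi_1$ equal to $1$ on $\{0\}\cup\operatorname{Supp}\chi$ and write $\chi=\chi(0)\chi_1+(\chi-\chi(0)\chi_1)$. Since $\frac{\chi(\sigma)-\chi(0)\chi_1(\sigma)}{\sigma}$ is smooth and compactly supported (it vanishes at $\sigma=0$) and $\zeta''\ge c$, van der Corput bounds its integral against $e^{it\zeta}$ on $[0,\infty)$ by $O(t^{-1/2})$, while the missing stretch $[0,\tfrac1t]$ is trivially $O(\tfrac1t)$, so this part is $O(1)$. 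There remains $\chi(0)$ times $J:=\int_{1/t}^\infty\frac{e^{it\zeta(\sigma)}}{\sigma}\chi_1(\sigma)\,d\sigma$, and I would factor $e^{it\zeta(0)}$ out, i.e. replace $\zeta$ by $\zeta-\zeta(0)$ inside the integral.

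Next I would normalize the phase. Because $\zeta''\ge c>0$, $\zeta$ is strictly convex with minimum at $\sigma_0$, and $\zeta-\zeta(\sigma_0)$ vanishes to exactly second order there, so $\rho(\sigma):=\operatorname{sign}(\sigma-\sigma_0)\sqrt{\zeta(\sigma)-\zeta(\sigma_0)}$ is a smooth increasing diffeomorphism near $\operatorname{Supp}\chi_1$ with $\rho(\sigma_0)=0$ and — the crucial point — $\rho(0)=\alpha$. Changing variables $\sigma\mapsto\rho$ and writing $\sigma(\rho)=(\rho-\alpha)\,\omega(\rho)$ with $\omega$ smooth and nowhere zero, the amplitude becomes $\frac{\sigma'(\rho)}{\sigma(\rho)}\chi_1(\sigma(\rho))=\frac{\chi_2(\rho)}{\rho-\alpha}+(\text{smooth, compactly supported})$, where $\chi_2$ is smooth, compactly supported and $\equiv 1$ near $\rho=\alpha$. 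Against $e^{it(\rho^2-\alpha^2)}$ the smooth remainder is $O(t^{-1/2})$ by van der Corput, and so is the part of $\frac{\chi_2}{\rho-\alpha}$ supported where $|\rho-\alpha|\gtrsim 1$ (smooth amplitude, non‑degenerate phase $\rho^2$); this leaves a neighbourhood of $\rho=\alpha$. Rescaling $\tau=\sqrt t\,(\rho-\alpha)$ and using $e^{it\rho^2}=e^{it\alpha^2}e^{2i\sqrt t\alpha\tau}e^{i\tau^2}$, so that the $e^{\pm it\alpha^2}$ cancel, one is reduced to
$$
\int_{\tau_0}^{c_0\sqrt t}\frac{e^{i(\tau^2+2\sqrt t\,\alpha\,\tau)}}{\tau}\,d\tau,\qquad \tau_0=\rho'(0)\,t^{-1/2}+O(t^{-3/2})>0.
$$

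Everything now hinges on how the cut‑off scale $\tau_0\sim t^{-1/2}$ compares with $\sqrt t\,|\alpha|$. If $|\alpha|\lesssim t^{-1/2}$, the whole phase is $O(1)$ on $[\tau_0,1]$, so $e^{i(\tau^2+2\sqrt t\alpha\tau)}=1+O(\tau+\tau^2)$ there and that piece equals $\int_{\tau_0}^1\frac{d\tau}{\tau}+O(1)=-\log\tau_0+O(1)$, of order $\log t$; on $[1,c_0\sqrt t]$ the phase derivative $2\tau+2\sqrt t\alpha$ overcomes $\tau$ once $\tau$ exceeds a fixed constant, and integration by parts gives $O(1)$. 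If $\alpha\gtrsim t^{-1/2}$ I would rescale again by $v=2\sqrt t\,\alpha\,\tau$, getting $\int_{v_0}^{v_1}\frac{e^{i(v+v^2/\beta^2)}}{v}\,dv$ with $\beta=2\sqrt t\alpha\gtrsim 1$, $v_0\sim\alpha$, $v_1$ large; on $[v_0,1]$ the correction $v^2/\beta^2$ is $O(1)$, so this is $\int_{v_0}^1\frac{e^{iv}}{v}\,dv+O(1)=F(v_0)+O(1)=F(\alpha)+O(1)$ (using $F(z)=-\log z+O(1)$ and $F(c\alpha)-F(\alpha)=O(1)$), while on $[1,v_1]$ the phase derivative $1+2v/\beta^2\gtrsim\max(1,v/\beta^2)$ lets integration by parts give $O(1)$. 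If $\alpha\lesssim -t^{-1/2}$, the substitution $w=-2\sqrt t\alpha\tau$ turns the phase near $\tau=0$ into $e^{-iw}$ and yields $\bar F(-\alpha)+O(1)$ instead; the rescaled phase now has a stationary point at $\tau=-\sqrt t\alpha>0$ inside the interval, but its contribution is $O((\sqrt t|\alpha|)^{-1})=O(1)$. Restoring $\chi(0)e^{it\zeta(0)}$ in front then assembles the three cases.

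The delicate point — and the mechanism that actually creates the logarithm and the function $F$ — is the lower cut‑off $\sigma=\tfrac1t$: a careless integration by parts over all of $[\tau_0,c_0\sqrt t]$ generates a boundary term of size $\tau_0^{-1}\sim\sqrt t$, far larger than $\log t$, so one must \emph{not} integrate by parts on the initial stretch $[\tau_0,O(1)]$, where the phase is merely $O(1)$, but instead Taylor‑expand the exponential there; it is precisely $\int_{\tau_0}^{O(1)}\frac{d\tau}{\tau}$ (resp. $\int_{v_0}^{O(1)}\frac{e^{\pm iv}}{v}\,dv$) that produces $-\log\tau_0+O(1)\sim\log t$ (resp. $F(\alpha)+O(1)$, $\bar F(-\alpha)+O(1)$). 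The remaining bookkeeping — checking that, when $\alpha<0$, the stationary‑point term and the interface boundary terms stay $O(1)$ — is exactly where the hypothesis $|\alpha|\gtrsim t^{-1/2}$ (equivalently $\sqrt t\,|\sigma_0|\gtrsim 1$) gets used.
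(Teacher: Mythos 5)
Your proposal is correct and follows essentially the same route as the paper: the Morse-type change of variables $\rho=\operatorname{sign}(\sigma-\sigma_0)\sqrt{\zeta(\sigma)-\zeta(\sigma_0)}$ reduces everything to the model integral $\int_{\tau_0}^{\;\cdot}\frac{e^{i(\tau^2+2\sqrt t\,\alpha\tau)}}{\tau}\,d\tau$ with $\tau_0\sim t^{-1/2}$, and the same case analysis ($|\alpha|$ versus $t^{-1/2}$, Taylor expansion of the phase on the initial stretch producing the $\log$ or $F$, integration by parts or van der Corput elsewhere) yields the three asymptotics. If anything, you are slightly more explicit than the paper on two points it glosses over — the harmless replacement $F(c\alpha)=F(\alpha)+O(1)$ coming from the constant in $\tau_0$, and the $O(1)$ stationary-point contribution in the case $\alpha\lesssim -t^{-1/2}$, which the paper omits "for simplicity".
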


\subsection{Proof of Lemma~\ref{chardonneret}} 
\label{macareux}
\underline{Step 1: reducing matters to a model case}.
Let us assume for a moment that we have the following estimate for a model case: fix $\chi \in \mathcal{C}^\infty_0$, then if $0<\epsilon << \frac{1}{\sqrt{t}}$ and $|\alpha| \leq 1$, 
\begin{equation}
\label{colibri}
\int_\epsilon^\infty \frac{e^{it(\alpha+\sigma)^2}}{\sigma} \chi(\sigma) \,d\sigma = \left\{ \begin{array}{ll} - \chi(0) e^{it\alpha^2} \log (\sqrt{t}\epsilon) + O(1) & \mbox{if $|\alpha|\lesssim \frac{1}{\sqrt{t}}$} \\
\chi(0) e^{it\alpha^2} F(2 \epsilon t \alpha) + O(1) & \mbox{if $\alpha\gtrsim \frac{1}{\sqrt{t}}$}\\
\chi(0) e^{-it\alpha^2} \bar F(- 2 \epsilon t \alpha) + O(1) & \mbox{if $\alpha \lesssim - \frac{1}{\sqrt{t}}$}                                                                                   \end{array}
\right.
\end{equation}
and let us show that the lemma follows. First it is possible to assume that $\zeta(\sigma_0)=0$. Next, define the smooth function
$$
\varphi(\sigma) = \operatorname{sign} (\sigma-\sigma_0) \sqrt{\zeta(\sigma)} + \sigma_0
$$
and use it to change the integration variable to $\tau = \varphi(\sigma)$. This gives
\begin{equation}
\label{pingouin1}
\int_{1/t}^\infty \frac{e^{it\zeta(\sigma)}}{\sigma} \chi(\sigma)\,d\sigma = \int_{\varphi(1/t)}^\infty \frac{e^{it(\tau-\sigma_0)^2}}{\varphi^{-1}(\tau)} \chi(\varphi^{-1}(\tau)) (\varphi^{-1})'(\tau) \,d\tau.
\end{equation}
Now observe that $\varphi^{-1} (\tau) = 0$ if $\tau = \tau_0 = \varphi(0) = \operatorname{sign}(-\sigma_0) \sqrt{\zeta(0)} + \sigma_0$. Therefore, one can write 
$\frac{1}{\varphi^{-1}(\tau)} = \frac{1}{\tau - \tau_0} \gamma(\tau)$ for a smooth function $\gamma(\tau)$ such that
$\gamma(\tau_0) = \frac{1}{(\varphi^{-1})'(\tau_0)}$. Still denoting $\gamma(\tau)$ for a smooth function, different from the previous one,
the above can be written
\begin{equation}
\label{pingouin2}
\int_{1/t}^\infty \frac{e^{it\zeta(\sigma)}}{\sigma} \chi(\sigma)\,d\sigma  = \int_{\varphi(1/t)}^\infty \frac{e^{it(\tau-\sigma_0)^2}}{\tau-\tau_0} \gamma(\tau) \,d\tau,
\end{equation}
with $\gamma(\tau_0) = \chi(0)$. A last change of variables gives, still for another smooth function $\gamma$ such that $\gamma(0) = \chi(0)$,
$$
\int_{1/t}^\infty \frac{e^{it\zeta(\sigma)}}{\sigma} \chi(\sigma)\,d\sigma  = \int_{\varphi(1/t)-\varphi(0)}^\infty \frac{e^{it(\tau + \tau_0 -\sigma_0)^2}}{\tau} \gamma(\tau)\,d\tau.
$$
This has the desired form appearing in~(\ref{colibri}) upon setting
\begin{align*}
& \alpha = \tau_0 -\sigma_0 = \operatorname{sign} (-\sigma_0) \sqrt{\zeta(0)} \\
& \epsilon =  \varphi(1/t)-\varphi(0) = \frac{|\zeta'(0)|}{2 \sqrt{\zeta(0)}} \frac{1}{t}
+O \left(\frac{1}{t^2} \right)
\end{align*}
(this last identity can be easily checked by distinguishing the cases $\sigma_0 < 0$, $0< \sigma_0 < \frac{1}{t}$, and $\sigma_0 > \frac{1}{t}$).

\bigskip
\noindent
\underline{Step 2: proof of the model case estimate~\ref{colibri} if $ \sqrt{t} |\alpha| \lesssim 1$}. 
Observing first that the smooth function $\chi$ can be removed if one multiplies the expression by $\chi(0)$, and performing afterwards a change of integration variable, one obtains
\begin{equation}
\label{petrel}
\begin{split}
\int_{\epsilon}^\infty \frac{e^{it(\alpha+\sigma)^2}}{\sigma} \chi(\sigma) \,d\sigma & = \chi(0) e^{it\alpha^2} \int_{\epsilon}^\infty \frac{e^{it(\sigma^2 + 2\alpha\sigma)}}{\sigma} \,d\sigma +O(1) \\
& = \chi(0) e^{it\alpha^2} \int_{\sqrt{t} \epsilon}^\infty \frac{e^{i(s^2 + 2ys)}}{s}\,ds + O(1), 
\end{split}
\end{equation}
where $s = \sqrt{t} \sigma$ and $y = \sqrt{t}\alpha$.
We now introduce a cut-off function $\mu$ which is smooth, equal to $1$ on $B(0,1)$, and $0$ outside of $B(0,2)$. It is easy to see that
\begin{equation*}\begin{split}
\int_{\sqrt{t} \epsilon}^\infty \frac{e^{i(s^2 + 2ys)}}{s}\,ds & = \int_{\sqrt{t} \epsilon}^\infty \frac{e^{i(s^2 + 2ys)}}{s}\mu(s) \,ds + O(1) \\
& = \int_{\sqrt{t} \epsilon}^\infty \frac{e^{i(s^2 + 2ys)}-1}{s}\mu(s) \,ds + \int_{\sqrt{t} \epsilon}^\infty \frac{1}{s}\mu(s) \,ds.
\end{split}
\end{equation*}
A straightforward estimate gives on the one hand
$$
\left| \int_{\sqrt{t} \epsilon}^\infty \frac{e^{i(s^2 + ys)}-1}{s}\mu(s) \,ds \right| \lesssim \left| \int_{\sqrt{t} \epsilon}^\infty (s + |y|) \mu(s) \,ds \right| = O(1),
$$
and on the other hand
$$
\int_{\sqrt{t} \epsilon}^\infty \frac{1}{s}\mu(s) \,ds =- \log(\sqrt{t} \epsilon) + O(1).
$$
Therefore, coming back to~(\ref{petrel}), we get
$$
\int_{\epsilon}^\infty \frac{e^{it(\alpha+\sigma)^2}}{\sigma} \chi(\sigma) \,d\sigma = -\chi(0) e^{it\alpha^2} \log(\sqrt{t} \epsilon) + O(1),
$$
which is the desired estimate.

\bigskip
\noindent
\underline{Step 3: proof of the model case estimate~\ref{colibri} if $\sqrt{t}|\alpha| \gtrsim 1$}. We only deal with $\alpha \geq 0$ for simplicity. Proceeding as in step 2, we obtain
\begin{equation*}
\int_{\epsilon}^\infty \frac{e^{it(\alpha+\sigma)^2}}{\sigma} \chi(\sigma) \,d\sigma 
 = \chi(0) e^{it\alpha^2} \int_{\sqrt{t} \epsilon}^\infty \frac{e^{i(s^2 + 2ys)}-e^{2iys}}{s}\mu(s) \,ds + \chi(0) \int_{\sqrt{t} \epsilon}^\infty \frac{e^{2iys}}{s}\mu(s) \,ds + O(1).
\end{equation*}
Now observe that on the one hand
$$
\left| \int_{\sqrt{t} \epsilon}^\infty \frac{e^{i(s^2 + 2ys)}-e^{i2ys}}{s}\mu(s) \,ds \right| \lesssim \int_{\sqrt{t} \epsilon}^\infty s |\mu(s)|\,ds = O(1)
$$
whereas on the other hand, since $|y|>1$,
$$
\int_{\sqrt{t} \epsilon}^\infty \frac{e^{2iys}}{s}\mu(s) \,ds = \int_{\sqrt{t} \epsilon}^\infty \frac{e^{2iys}}{s}\,ds + O(1)
= \int_{2 y \sqrt{t} \epsilon}^\infty \frac{e^{iv}}{v}\,dv + O(1) = F (2 y\sqrt{t}\epsilon) + O(1).
$$
Combining the two previous equalities gives the desired result:
$$
\int_{\epsilon}^\infty \frac{e^{it(\alpha+\sigma)^2}}{\sigma} \chi(\sigma) \,d\sigma = \chi(0) e^{it\alpha^2} F (2 y\sqrt{t}\epsilon) + O(1).
$$

\section{Asymptotic equivalent for $h=e^{-ita(D)}u$}
\label{sectionequivalenth}

\subsection{Main result}

Recall that
$$
h = e^{-ita(D)} u(t)
$$
so that
$$
\widehat{h}(t,\xi) = \int_0^t \int e^{is\phi(\xi,\eta)} m(\xi,\eta) \widehat{f}(\eta) \widehat{g}(\xi-\eta)\,d\eta\,ds.
$$
A formal computation gives easily that, under generic assumptions (excluding $\phi$ constant for instance), $\widehat{h}$ approaches
$$
\widehat{h_\infty}(\xi) = - \int \frac{1}{i \phi(\xi,\eta)} m(\xi,\eta) \widehat{f}(\eta) \widehat{g}(\xi-\eta)\,d\eta.
$$
as $t \rightarrow \infty$. For $f$ and $g$ in the Schwartz class, the following theorem investigates the smoothness of $\widehat{h_\infty}$ (and consequently, the decay of $h$), and how fast this limiting profile is approached.

\begin{theorem} 
\label{theoremeasymptotiqueh} 
\label{mesange}
Assume that $f$ and $g$ in~(\ref{equationdebase}) belong to the Schwartz class $\mathcal{S}$.
\begin{itemize}

\item[(i)] For $d=2,3$, in the absence of time resonances ($\mathcal{T} \cap \operatorname{Supp} m = \{ 0 \}$), the profile $\widehat{h_\infty}$ is smooth. If (A1) holds,
$$
\left\| \widehat{h}(t,\xi) - \widehat{h_\infty}(\xi) \right\|_\infty \lesssim \frac{1}{t^{d/2}}.
$$

\item[(ii)] For $d=2,3$, in the absence of space resonances ($\mathcal{S} \cap \operatorname{Supp} m = \{ 0 \}$), the profile $\widehat{h_\infty}$ is smooth. If (A1) holds,
$$
\left\| \widehat{h}(t,\xi) - \widehat{h_\infty}(\xi) \right\|_\infty \lesssim \frac{1}{t^N} \qquad \mbox{for any $N$}.
$$

\item[(iii)] If $d=2$, and space-time resonances occur ($\mathcal{R}  \cap \operatorname{Supp} m \neq \{ 0 \}$), the profile $\widehat{h_\infty}$ is smooth except on outcome frequencies, corresponding to 
the $\xi$-projection of the space-time resonant set:
$$
\mathcal{O} \overset{def}{=} \pi_\xi \mathcal{R}.
$$
To describe more precisely this singularity, let, for any given $\xi$, $H(\xi)$ be such that $(\xi,H(\xi)) \in \mathcal{S}$. Then, as $\xi$ approaches $\mathcal{O}$,
\begin{equation}
\label{pinson2}
\widehat{h_\infty}(\xi) \sim \widehat{f}(H(\xi)) \widehat{g}(\xi-H(\xi)) m(\xi,H(\xi))\beta(\xi,\eta) \log |\phi(\xi,H(\xi))|,
\end{equation}
where $\beta$ is a smooth function depending only on $\phi$. Furthermore,
\begin{equation}
\label{pinson1}
\widehat{h}(t,\xi) = \widehat{f}(H(\xi)) \widehat{g}(\xi-H(\xi)) m(\xi,H(\xi)) \beta(\xi,\eta) Z(t,\phi(\xi,H(\xi))) + O(1)
\end{equation}
where
$$
Z(t,u) = \int_{1/t}^1 e^{it \sigma u} \frac{d\sigma}{\sigma} = 
\left\{ \begin{array}{l} \log t + O(tu) \;\;\;\mbox{if $u<<\frac{1}{t}$} \\ \log u + O(1) \;\;\;\mbox{if $u>>\frac{1}{t}$}. \end{array} \right. 
$$

\item[(iv)] If $d=3$, space-time resonances occur ($\mathcal{R}  \cap \operatorname{Supp} m \neq \{ 0 \}$), and (A1) holds, the profile $\widehat{h_\infty}$ is approached at the rate
$$
\left\| \widehat{h}(t,\xi) - \widehat{h_\infty}(\xi) \right\|_\infty \lesssim \frac{1}{\sqrt{t}}.
$$
It is smooth, except on $\mathcal{O}$, where it is only $\mathcal{C}^{1/2-\epsilon}$ for any $\epsilon$. To describe more precisely this singularity, we consider the derivative of $\widehat{h_\infty}$. It satisfies, as $\xi$ approaches $\mathcal{O}$,
\begin{equation}
\label{moineau2}
\nabla_\xi \widehat{h_\infty} (\xi) \sim i \nabla_\xi \phi(\xi,H(\xi)) \widehat{f}(H(\xi)) \widehat{g}(\xi-H(\xi)) m(\xi,H(\xi))\beta(\xi,\eta) \frac{C_{\operatorname{sign} (\phi(\xi,H(\xi))}}{\sqrt{ |\phi(\xi,H(\xi))|}},
\end{equation}
where $C_- = \overline{C_+}$ is a complex-valued constant.
Furthermore,
\begin{equation}
\label{moineau1}
\nabla_\xi \widehat{h}(t,\xi) = \sqrt{t} i \nabla_\xi \phi(\xi,H(\xi)) \widehat{f}(H(\xi)) \widehat{g}(\xi-H(\xi)) m(\xi,H(\xi))\beta(\xi,\eta) Y(t \phi(\xi,H(\xi))
\end{equation}
where
$$
Y(u) \overset{def}{=} \int_0^1 \frac{e^{i\sigma u}}{\sqrt{\sigma}} \,d\sigma = \left\{ \begin{array}{l} 2 + O(u) \qquad \mbox{as $u \to 0$} \\ \frac{C_{\pm}}{\sqrt{u}} \qquad \mbox{as $u \to \pm \infty$}. \end{array} \right.
$$
\end{itemize}
\end{theorem}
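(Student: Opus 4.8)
My plan is to read all four statements off the single representation
$$
\widehat h(t,\xi)=\int_0^t\!\!\int e^{is\phi(\xi,\eta)}m(\xi,\eta)\widehat f(\eta)\widehat g(\xi-\eta)\,d\eta\,ds ,
$$
after splitting the time integral at $s=1$: the part $\int_0^1\,ds$ is a smooth, uniformly bounded function of $\xi$ together with all its $\xi$-derivatives, so it only feeds the $O(1)$ or $O(t^{-N})$ remainders, and everything takes place in $\int_1^\infty\,ds$. For $s\ge1$ the first move is to dispose of the inner integral by stationary phase in $\eta$. Away from $\mathcal S$ the phase $\eta\mapsto\phi(\xi,\eta)$ has no critical point on $\operatorname{Supp}m$, and repeated integration by parts gives $\int e^{is\phi}m\widehat f\widehat g\,d\eta=O_N(s^{-N})$, with the same bound for every $\xi$-derivative (a $\partial_\xi$ produces a factor $is\,\partial_\xi\phi$, harmless against $s^{-N}$). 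Near a space resonance, for $\xi$ in a neighbourhood of $\mathcal O=\pi_\xi\mathcal R$ the unique critical point in $\eta$ is the smooth branch $H(\xi)$ with $\nabla_\eta\phi(\xi,H(\xi))=0$, non-degenerate by (A1); the stationary phase lemma then gives, uniformly for such $\xi$,
$$
\int e^{is\phi(\xi,\eta)}m\widehat f\widehat g\,d\eta
=\beta(\xi,H(\xi))\,m(\xi,H(\xi))\widehat f(H(\xi))\widehat g(\xi-H(\xi))\,\frac{e^{is\phi(\xi,H(\xi))}}{s^{d/2}}+O\!\left(s^{-\frac d2-1}\right),
$$
where $\beta$ is the standard stationary-phase prefactor, a smooth function of $\operatorname{Hess}_\eta\phi$ (hence "of $\phi$"), and the remainder is integrable on $[1,\infty)$, uniformly in $\xi$.

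\emph{Cases (i) and (ii).} Write $\widehat h(t,\xi)=\widehat h_\infty(\xi)-\int_t^\infty\!\int e^{is\phi}m\widehat f\widehat g\,d\eta\,ds$. In the absence of space resonances the inner integral and all its $\xi$-derivatives are $O_N(s^{-N})$, so $\widehat h(t,\cdot)\to\widehat h_\infty$ in $\mathcal C^\infty$ at rate $t^{-N}$ for every $N$, which is (ii). In the absence of time resonances the inner integral equals $\beta\,m\widehat f\widehat g\,s^{-d/2}e^{is\phi(\xi,H(\xi))}+O(s^{-d/2-1})$ with $\phi(\xi,H(\xi))$ bounded away from $0$ on $\operatorname{Supp}m$; one further integration by parts in $s$ turns $\int_t^\infty s^{-d/2}e^{is\phi(\xi,H(\xi))}\,ds$ into $O(t^{-d/2})$, and the same applied to all $\xi$-derivatives gives smoothness of $\widehat h_\infty$ and the rate $t^{-d/2}$, which is (i).

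\emph{Cases (iii) and (iv).} For $\xi$ bounded away from $\mathcal O$ the quantity $u:=\phi(\xi,H(\xi))$ is bounded away from $0$, so the case-(i) analysis applies locally and gives smoothness of $\widehat h_\infty$ there; the content is local near $\mathcal O$. Inserting the $\eta$-expansion into $\int_1^t\,ds$ leaves, up to a smooth $O(1)$,
$$
\widehat h(t,\xi)=\beta(\xi,H(\xi))\,m(\xi,H(\xi))\widehat f(H(\xi))\widehat g(\xi-H(\xi))\int_1^t\frac{e^{is u}}{s^{d/2}}\,ds+O(1),
$$
and the substitution $s=t\sigma$ turns the scalar integral into $t^{1-d/2}\int_{1/t}^1\sigma^{-d/2}e^{it\sigma u}\,d\sigma$. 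For $d=2$ this is exactly $Z(t,u)$; comparing $e^{it\sigma u}$ with $1$ when $|u|\lesssim1/t$ and substituting $v=t\sigma u$ when $|u|\gtrsim1/t$ produces the two regimes of $Z$ in the statement, and $Z(t,u)\to F(u)=\int_u^\infty e^{iv}v^{-1}\,dv=-\log|u|+O(1)$ for $u\ne0$; this is \eqref{pinson1}, and letting $t\to\infty$ off $\mathcal O$ is \eqref{pinson2} (the sign of the logarithm being absorbed into $\beta$). For $d=3$ the integral $\int_1^\infty s^{-3/2}e^{isu}\,ds$ converges absolutely and defines the singular part of $\widehat h_\infty$, with $\bigl|\int_t^\infty s^{-3/2}e^{isu}\,ds\bigr|\le2t^{-1/2}$ uniformly in $u$, the claimed rate; it is smooth in $\xi$ off $\mathcal O$ since $u$ is then bounded away from $0$. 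To pin down the singularity of $\widehat h_\infty$ in case (iv) I differentiate in $\xi$ before passing to the limit: because $\nabla_\eta\phi(\xi,H(\xi))=0$, $\partial_\xi$ applied to $e^{is\phi(\xi,H(\xi))}$ gives $is\,\nabla_\xi\phi(\xi,H(\xi))\,e^{isu}$, the terms in which $\partial_\xi$ hits the amplitude being less singular; this yields $\int_1^t s^{-1/2}e^{isu}\,ds=\sqrt t\,Y(tu)+O(1)$ with $Y(u)=\int_0^1\sigma^{-1/2}e^{i\sigma u}\,d\sigma$, which is \eqref{moineau1}, and $Y(u)\sim C_\pm u^{-1/2}$ as $u\to\pm\infty$ gives \eqref{moineau2}, the factor $\nabla_\xi\phi(\xi,H(\xi))$ being nonzero by (A2). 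Finally, a function whose gradient is $O(\operatorname{dist}(\xi,\mathcal O)^{-1/2})$ near the hypersurface $\mathcal O$ is $\mathcal C^{1/2-\epsilon}$ there, completing (iv).

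\emph{Main obstacle.} The delicate point is the uniformity of the $\eta$-stationary-phase expansion as $\xi\to\mathcal O$: one must check that the error $O(s^{-d/2-1})$ and, for (iv), its $\xi$-derivatives have constants that do not blow up on the resonant set — which is exactly what (A1) secures, via non-degeneracy of $\operatorname{Hess}_\eta\phi$ near $\mathcal O$ and smoothness of the critical branch $H(\xi)$. The rest — cutting $\operatorname{Supp}m$ into a small neighbourhood of $\mathcal R$ and its complement and patching the local picture near $\mathcal O$ with the globally non-resonant estimate, plus the elementary asymptotics of $F$, $Y$, $Z$ — is routine.
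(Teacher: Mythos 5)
Your argument is essentially the paper's: stationary phase in $\eta$ at the non-degenerate critical point $H(\xi)$ (non-degeneracy from (A1)), followed by the rescaling $s=t\sigma$ of the resulting scalar oscillatory integral, reproduces $Z$ and $Y$ and hence \eqref{pinson1}, \eqref{pinson2}, \eqref{moineau1}, \eqref{moineau2}; non-stationary phase in $\eta$ gives (ii), and the $t^{-d/2}$ rate in (i) comes out the same way. The only cosmetic difference is the order of operations in (i): the paper first integrates exactly in $s$, writing $\widehat{h}(t,\xi)-\widehat{h_\infty}(\xi)=\int\frac{e^{it\phi}}{i\phi}m\widehat{f}\widehat{g}\,d\eta$, and then applies stationary phase in $\eta$ with large parameter $t$, whereas you expand in $\eta$ first and then integrate by parts once in $s$; both rely on the same ingredients.

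One step does not work as written: in (i) (and, by reference, for smoothness off $\mathcal{O}$ in (iii)--(iv)) you deduce smoothness of $\widehat{h_\infty}$ from convergence of all $\xi$-derivatives of $\widehat{h}(t,\cdot)$. But each $\partial_\xi$ falling on the phase costs a factor $s$, so after stationary phase the $k$-th $\xi$-derivative of the inner $\eta$-integral behaves like $s^{k-d/2}e^{is\phi(\xi,H(\xi))}$, whose integral over $s\in[t,\infty)$ does not converge (let alone decay like $t^{-d/2}$) once $k\geq d/2$; for instance when $d=2$, $\nabla_\xi\widehat{h}(t,\xi)$ contains a bounded, non-decaying oscillatory term and need not converge pointwise. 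The conclusions are unaffected: in (i) smoothness is immediate from the explicit formula $\widehat{h_\infty}=-\int\frac{m}{i\phi}\widehat{f}\widehat{g}\,d\eta$, since $\phi\neq 0$ on $\operatorname{Supp}m$ (this is the paper's one-line argument), and for $\xi$ away from $\mathcal{O}$ one gets smoothness by integrating the $s$-integral exactly on a small $\eta$-neighbourhood of $H(\xi)$, where $|\phi|\gtrsim 1$, and using non-stationary phase in $\eta$ on the rest of $\operatorname{Supp}m$. With that repair your proof coincides with the paper's.
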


The theorem above gives a precise description of the singularity of $\widehat{h_\infty}$ which appears in the case where $\mathcal{R}$ is not empty. It is roughly speaking of the form
\begin{align*}
& \log |\phi(\xi,H(\xi))| \qquad \mbox{if $d=2$} \\
& |\phi(\xi,H(\xi))|^{1/2} \qquad \mbox{if $d=3$}.
\end{align*}
The function $\phi(\xi,H(\xi))$ vanishes on $\mathcal{O}$, and its derivative for $\xi \in \mathcal{O}$ equals
$$
\nabla_\xi \left[  \phi(\xi,H(\xi)) \right] = \nabla_\xi \phi + \underbrace{\nabla_\eta \phi}_{=0} \nabla_\xi H(\xi) = \nabla_\xi \phi.
$$
Under the assumption (A2), $\nabla_\xi \phi \neq 0$. As is easily seen, this first implies that $\mathcal{O}$ is a smooth $(d-1)$-manifold. Second, the singularity can be thought of as being of the form
\begin{align*}
\left\{ \begin{array}{l} 
\log | \operatorname{dist}(\xi,\mathcal{O}) | \qquad \mbox{if $d=2$} \\
 | \operatorname{dist}(\xi,\mathcal{O}) |^{1/2} \qquad \mbox{if $d=3$}.         
        \end{array}
\right.
\end{align*}
This singularity will determine the decay at space infinity of $h$. Before proceeding, we make another generic assumption, namely that $\mathcal{O}$ has a non-vanishing Gauss curvature. It is then a classical result of harmonic analysis that the above singularities lead to decays at infinity of the type
\begin{align*}
& |h_\infty(x)| \sim \frac{1}{|x|^{3/2}} \qquad \mbox{if $d = 2$} \\
& |h_\infty(x)| \sim \frac{1}{|x|^{5/2}} \qquad \mbox{if $d=3$}
\end{align*}
(more precisely, such a decay is observed in directions normal to the singular surface). This implies that
\begin{align*}
h_\infty(x) \in L^p \quad \mbox{if and only if} \quad \left\{ \begin{array}{l} p > \frac{4}{3} \quad \mbox{if $d=2$} \\ p > \frac{5}{6} \quad \mbox{if $d=3$}
\end{array} \right.
\end{align*}
This has the following implication, which should be particularly relevant for global well-posedness problems: {\it the decay of $u$ as $t \to \infty$ is actually better than the one suggested by the dispersive estimates~(\ref{dispersive}) and the Lebesgue spaces to which $h_\infty$ belongs}. For instance, in dimension 3, we saw in Section~\ref{sectionequivalentu} that $u$ decays like $t^{-3/2}$ as $t \to \infty$. Heuristically, this would follow from the dispersive estimates~\eqref{dispersive} and the fact that $h_\infty$ belongs to $L^1$... except that $h_\infty \notin L^1$! Thus the optimal decay cannot be read off from knowledge of the profile and $L^p - L^q$ dispersive inequalities; Klainerman-Sobolev inequalities instead of $L^p - L^q$ dispersive inequalities would not give the optimal decay either.

\subsection{Proof of Theorem~\ref{theoremeasymptotiqueh} } 

\noindent \underline{Proof of (i).}  The smoothness of $\widehat{h_\infty}$ is obvious. To prove the convergence at the right rate, observe that
$$
\widehat{h}(t,\xi) - \widehat{h_\infty}(\xi) =  \int \frac{e^{it\phi(\xi,\eta)}}{i \phi(\xi,\eta)} m(\xi,\eta) \widehat{f}(\eta) \widehat{g}(\xi-\eta)\,d\eta.
$$
The stationary phase lemma gives the desired decay for the right-hand side.

\bigskip

\noindent \underline{Proof of (ii).} Repeated integrations by parts in $\eta$ show that
$$
\left| \nabla_{\xi}^k \int e^{is\phi(\xi,\eta)} m(\xi,\eta) \widehat{f}(\eta) \widehat{g}(\xi-\eta)\,d\eta \right| \lesssim s^{-N}
$$ 
for any $N$. This immediately gives the desired result.

\bigskip

\noindent \underline{Proof of (iii).} 
The profile $\widehat{h}$ is given by
\begin{equation*}
\begin{split}
\widehat{h}(t,\xi) & = \int_0^t \int e^{is\phi(\xi,\eta)} \widehat{f}(\eta) \widehat{g}(\xi-\eta) m(\xi,\eta) \,d\eta\,ds \\
& = t \int_0^1 \int e^{it \sigma \phi(\xi,\eta)} \widehat{f}(\eta) \widehat{g}(\xi-\eta) m(\xi,\eta) \,d\eta\,d\sigma \\
& = t \int_{0}^{1/t} \dots + t \int_{1/t}^1 \int e^{it \sigma \phi(\xi,\eta)} \widehat{f}(\eta) \widehat{g}(\xi-\eta) m(\xi,\eta) \,d\eta\,d\sigma \\
\end{split}
\end{equation*}
The first term in the last line above is obviously $O(1)$. We turn to the second, which can be analysed by the stationary phase theorem (in $\eta$)
\begin{align*}
\lefteqn{t \int_{1/t}^1 \int e^{it\sigma \phi(\xi,\eta)} \widehat{f}(\eta) \widehat{g}(\xi-\eta) m(\xi,\eta) \,d\eta\,d\sigma} & & \\ 
& & = t \int_{1/t}^1 \frac{1}{t\sigma} \beta(\xi,\eta) e^{it\sigma \phi(\xi,H(\xi))} \widehat{f}(H(\xi)) \widehat{g}(\xi-H(\xi)) m(\xi,H(\xi))\,d\sigma + O(1).
\end{align*}
for a smooth function $\beta$. This is exactly~(\ref{pinson1}), from which~(\ref{pinson2}) follows.

\bigskip

\noindent \underline{Proof of (iv).} The first assertion simply follows from the formulas giving $\widehat{h}$ and $\widehat{h_\infty}$ and the observation that, since (A1) holds, the stationary phase lemma gives
$$
\left| \int e^{is\phi(\xi,\eta)} \widehat{f}(\eta) \widehat{g}(\xi-\eta) m(\xi,\eta) \,d\eta \right| \lesssim \frac{1}{s^{3/2}}.
$$
Next, we turn to the derivative of $\widehat{h}$. It is given by
\begin{align*}
\nabla_\xi \widehat{h}(t,\xi) = & \int_0^t i s \nabla_\xi \phi(\xi,\eta) e^{is\phi(\xi,\eta)} m(\xi,\eta) \widehat{f}(\eta) \widehat{g}(\xi-\eta) \,d\eta \,ds \\
& \qquad \qquad + \int_0^t e^{is\phi(\xi,\eta)} \nabla_\xi [m(\xi,\eta) \widehat{f}(\eta) \widehat{g}(\xi-\eta)] \,d\eta \,ds.
\end{align*}
The second term in the above right-hand side is easily seen to be $O(1)$. Estimating the first one by stationary phase, and recalling that $H(\xi)$ is such that $(\xi,H(\xi)) \in \mathcal{S}$, gives
$$
\nabla_\xi \widehat{h}(t,\xi) = \int_0^t i \nabla_\xi \phi(\xi,H(\xi)) e^{is\phi(\xi,H(\xi))} m(\xi,H(\xi)) \widehat{f}(H(\xi)) \widehat{g}(\xi-H(\xi)) \frac{\beta(\xi,\eta)}{\sqrt{s}} \,ds + O(1)
$$
for a smooth function $\beta$. Changing variables to $\sigma = \frac{s}{t}$, this becomes
$$
\nabla_\xi \widehat{h}(t,\xi) = \sqrt{t} i  \nabla_\xi \phi(\xi,H(\xi)) m(\xi,H(\xi)) \widehat{f}(H(\xi)) \widehat{g}(\xi-H(\xi)) \beta(\xi,H(\xi)) \int_0^1  \frac{e^{it \sigma \phi(\xi,H(\xi))}}{\sqrt{\sigma}} \,d\sigma.
$$
This is exactly~(\ref{moineau1}), from which~(\ref{moineau2}) follows.

\section{Bilinear dispersive bounds}

\label{sectionbounds}

\label{bounds}

\subsection{Statement of the results}

We want to quantify the decay of the data required to prove the optimal decay estimates of Theorem \ref{theoremeasymptotique}.

\begin{theorem} \label{thm:d2}
Assume that $d=2$ and the assumptions (A1), (A2) and (A3) are satisfied. For initial data $(f,g)$, consider $u$ the solution of~(\ref{equationdebase}). 
Then for every $\epsilon>0$, 
$$ \|u(t)\|_{L^\infty} \lesssim t^{-1} \log(t) \|f\|_{L^{2,\frac{3}{2}+\epsilon}} \|g\|_{L^{2,\frac{3}{2}+\epsilon}},$$
and the time-decay is optimal.
\end{theorem}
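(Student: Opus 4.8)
The plan is to turn the estimate into a uniform weighted $L^2$ bound on the Duhamel kernel, and then run on that kernel the stationary-phase machinery of Section~\ref{sectionequivalentu}. Writing out $\widehat f,\widehat g$ in~(\ref{duhamel}) gives $u(t,x)=\iint K_t(x,y,z)f(y)g(z)\,dy\,dz$ with
\[
K_t(x,y,z)=c\,t\int_0^1\!\!\iint e^{it\psi(\xi,\eta,\sigma,x/t)}\,e^{-iy\eta-iz(\xi-\eta)}\,m(\xi,\eta)\,d\xi\,d\eta\,d\sigma,\qquad \sigma=\tfrac st,
\]
$\psi$ being the phase from the proof of Theorem~\ref{theoremeasymptotique}. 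For fixed $x$, $(f,g)\mapsto u(t,x)$ is a bilinear form on $L^{2,s}\times L^{2,s}$ whose norm, with $s=\tfrac32+\epsilon$, equals $\|\langle y\rangle^{-s}\langle z\rangle^{-s}K_t(x,\cdot,\cdot)\|_{L^2_{y,z}}$, so by Cauchy--Schwarz
\[
\|u(t)\|_{L^\infty}\le \|f\|_{L^{2,s}}\|g\|_{L^{2,s}}\ \sup_x\big\|\langle y\rangle^{-s}\langle z\rangle^{-s}K_t(x,\cdot,\cdot)\big\|_{L^2_{y,z}},
\]
and it suffices to bound the supremum by $t^{-1}\log t$. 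The virtue of this reformulation is that the only amplitude still present is the smooth, compactly supported symbol $m$: the finite spatial decay of $f$ and $g$ has been transferred onto the two weights.

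As in the proofs of parts (ii)--(iii) of Theorem~\ref{theoremeasymptotique}, split the $\sigma$-integral at $1/t$ and at a small $\epsilon>0$: $K_t=K_t^I+K_t^{II}+K_t^{III}$. The piece $K_t^I$ ($\sigma\in[0,1/t]$) is the kernel of $e^{ita(D)}\int_0^1 T_m(e^{isb(D)}\cdot,e^{isc(D)}\cdot)\,ds$; using the $L^1\!\to\!L^\infty$ dispersive bound ($\lesssim t^{-d/2}=t^{-1}$), the boundedness $T_m\colon L^{2,s'}\times L^{2,s'}\to L^1$ for $s'>d/2$ (which holds since $s>d/2$), and the boundedness of $e^{isb(D)},e^{isc(D)}$ on $L^{2,s}$ for $s\in[0,1]$ (after inserting the Fourier cutoff implicit in $T_m$), this piece is $O(t^{-1})$, which is admissible. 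The piece $K_t^{III}$ ($\sigma\in[\epsilon,1]$) is treated with Assumption (A3) exactly as in Step~3 of the proof of Theorem~\ref{thm:K2}: one localizes along two non-stationary phases, one of which is reduced to a variant of $K_t^{II}$ and the other of which is estimated directly since $\sigma$ is bounded away from $0$ there. So everything reduces to $K_t^{II}$ ($\sigma\in[1/t,\epsilon]$).

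For $K_t^{II}$, reproduce the reduction in the proof of Theorem~\ref{theoremeasymptotique}(iii), now carrying $y,z$ as parameters: a stationary phase in $\xi$ (legitimate for $\sigma<\epsilon$ by~(\ref{pingouin})), then a degenerate stationary phase in $\eta$ gaining a factor $\sim\frac1{\sigma t}$ since $\operatorname{Hess}_\eta[\psi(\Xi,\cdot)]=\sigma Z$ with $Z$ nondegenerate by (A1), and finally a one-dimensional oscillatory integral in $\sigma$ of the type covered by Lemma~\ref{chardonneret}, which produces the gain $\frac{\log t}t$ (or $\frac{\log\langle\mu\rangle}t$, $\mu$ the normal coordinate to $\mathcal R_X$). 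The parameters enter through the linear phases $e^{-i(y-z)\eta}$, $e^{-iz\xi}$: for $|z|\lesssim t$ and $|y-z|\lesssim t$ they merely translate the stationary points $\Xi$, $H$ by $O(z/t)$, $O((y-z)/t)$, i.e.\ they perturb the phase $\zeta(\sigma)$ and the (smooth, data-free) amplitude smoothly, while for $|z|\gtrsim t$ or $|y-z|\gtrsim t$ they have no critical point, and repeated integration by parts in $\xi$, resp.\ $\eta$ — each step gaining $\langle z\rangle^{-1}$, resp.\ $\langle y-z\rangle^{-1}$, at a cost $\lesssim t$ from the derivatives of $ta(\xi)+s\phi$ — gives rapid decay there. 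One thus obtains, uniformly in $x$, a pointwise bound $|K_t(x,y,z)|\lesssim \frac{\log t}t$ on $\{|z|\lesssim t,\ |y-z|\lesssim t\}$ with rapid decay outside; integrating against $\langle y\rangle^{-2s}\langle z\rangle^{-2s}$ and using that $\int_{\mathbb R^2}\langle w\rangle^{-2s}\,dw<\infty$ (since $2s>2$) yields $\sup_x\|\langle y\rangle^{-s}\langle z\rangle^{-s}K_t\|_{L^2_{y,z}}\lesssim\frac{\log t}t$. Optimality of the time decay is~(\ref{pie}).

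The main obstacle is the core of the $K_t^{II}$ analysis: running the two stationary phases and then Lemma~\ref{chardonneret} \emph{with the $y,z$-dependence tracked uniformly in $x$}, right along $\mathcal R$, where the $\xi$-, $\eta$- and $\sigma$-critical points collide and $\operatorname{Hess}_\eta[\phi]$ carries the degenerating prefactor $\sigma$; in particular the window $\sigma\sim 1/t$, where the $\eta$-stationary phase is no longer effective, must be estimated crudely yet still lose only an $O(1)$ after the $\frac{d\sigma}\sigma$ integration. Pushing the merely finite spatial decay of $f,g$ through all of this is what fixes the weight exponent at $\tfrac32+\epsilon$; this is not claimed to be optimal in dimension two (cf.\ the remark after Theorem~\ref{thm:main}).
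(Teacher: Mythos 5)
Your reduction to a kernel bound is in the spirit of the paper (the time-resonant piece is indeed handled there through the kernel $K$ of \eqref{kiwi}), but the core step is asserted on the basis of an incorrect claim, and this is exactly where the real difficulty lies. After the stationary phase in $\xi$ the reduced phase in $\eta$ is $t\sigma\,\phi(\Xi,\eta)-(y-z)\cdot\eta$ (up to terms independent of $\eta$), so its critical point satisfies $\nabla_\eta\phi(\Xi,\eta)=\frac{y-z}{\sigma t}$: the shift is $O\!\left(\frac{|y-z|}{\sigma t}\right)$, \emph{not} $O\!\left(\frac{|y-z|}{t}\right)$, and for $\sigma$ near the lower limit $1/t$ it is of size $|y-z|$, so the critical point can move far outside $\operatorname{Supp} m$ or cease to exist; in no regime is it a uniformly small perturbation. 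Equally damaging, once you perform the $\xi$-stationary phase first, the factor $e^{-iz\,\Xi(\eta,\sigma,X)}$ becomes part of the $\eta$-amplitude, with $\eta$-derivatives of size $|z|\,|\nabla_\eta\Xi|\sim |z|\sigma\lesssim t\sigma$ on the region $|z|\lesssim t$ you keep — exactly the size of the oscillation $(t\sigma)$ you want to exploit — so the composition of the two stationary phases (and then Lemma~\ref{chardonneret}) is not uniform in $(y,z)$, and the claimed pointwise bound $|K_t|\lesssim \frac{\log t}{t}$ on $\{|z|\lesssim t,\,|y-z|\lesssim t\}$ is not established. This is precisely what the paper's proof of Theorem~\ref{thm:K2} is built to circumvent: the dyadic decomposition in $|\nabla_\eta\phi|\sim 2^j$, the cutoff $\chi\bigl((\nabla_\xi\psi-t^{-1}z)/r\bigr)$ localizing $\xi$ at scale $r$ relative to $z$, and, crucially, the integration by parts in $\eta$ whose worst term carries the factor $(z_p-y_p)$ and is \emph{not} estimated pointwise at all, but as the bilinear operator $\mathcal{E}$ acting on $x_pf$, $x_pg$, bounded in Lemma~\ref{lem:E} by Fourier-side dispersive estimates and interpolation — which is also where the exponent $\frac32+\epsilon$ actually comes from (your accounting would only require $s>1+\epsilon$, and note the paper obtains only $\frac{\log t}{t^2}[1+|y|+|z|]^\epsilon$ for the pointwise part, never a clean uniform bound).

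A second gap: you never separate the space-resonant region from the time-resonant one. Your $\eta$-stationary phase has its critical points on all of $\mathcal{S}\cap\operatorname{Supp} m$, but (A1) guarantees non-degeneracy of $\operatorname{Hess}_\eta[\phi]$ only on $\mathcal{R}$; on $\mathcal{S}$ away from $\mathcal{T}$ no such hypothesis is available, and the paper handles that region by a completely different mechanism: the decomposition $m=m_1+m_2$ along $\Omega_1=\{2|\phi|\geq|\nabla\phi|\}$, exact integration in time (division by $\phi$), and the $TT^*$ bilinear estimates of Lemma~\ref{lem:1} feeding Proposition~\ref{prop:u1}. Without this (or some substitute), the sketch does not cover the full symbol $m$ under the stated hypotheses. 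The pieces you do treat correctly ($\sigma\in[0,1/t]$, $|z|\gg t$ or $|y-z|\gg t$ by non-stationary integration by parts, and the reduction of $\sigma\in[\epsilon_0,1]$ via (A3)) are the easy ones; the uniform-in-$(y,z)$ analysis of $K_t^{II}$ near $\mathcal{R}$, which you flag as ``the main obstacle,'' is the theorem, and the mechanism you propose for it does not work as stated.
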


\begin{remark}
We prove a slightly more precise result: for every $\epsilon>0$ we have
\begin{align*}
\|u(t)\|_{L^\infty} \lesssim & t^{-1} \left[ \|f\|_{L^{2,\frac{3}{2}+\epsilon}} \|g\|_{L^{2,\frac{1}{2}+\epsilon}} + \|f\|_{L^{2,\frac{1}{2}+\epsilon}}\|g\|_{L^{2,\frac{3}{2}+\epsilon}} \right] \\
& + t^{-1}\log(t) \|f\|_{L^{2,1+\epsilon}} \|g\|_{L^{2,1+\epsilon}}.  
\end{align*}
\end{remark}

\begin{corollary} \label{cor:d2} Under the same assumptions, for $\sigma\in(0,1)$ and every $s>\frac{3}{2}\sigma$, we have
$$ \|u(t)\|_{L^\infty} \lesssim t^{-\sigma}  \|f\|_{L^{2,s}} \|g\|_{L^{2,s}}.$$
\end{corollary}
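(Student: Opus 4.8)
The plan is to derive Corollary~\ref{cor:d2} from Theorem~\ref{thm:d2} by interpolating between the decay estimate of that theorem and the trivial ($L^2$-based, no decay) bound. First I would record the two endpoint bounds. On the one hand, the refined statement in the remark following Theorem~\ref{thm:d2} gives, for $s>1$,
$$
\|u(t)\|_{L^\infty} \lesssim t^{-1}\log t \,\|f\|_{L^{2,s}}\|g\|_{L^{2,s}} ,
$$
and in fact, absorbing the logarithm, $\|u(t)\|_{L^\infty}\lesssim t^{-1+\delta}\|f\|_{L^{2,s}}\|g\|_{L^{2,s}}$ for any $\delta>0$ when $s>1$. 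On the other hand, at $\sigma=0$ one has no decay but also essentially no weight: since $m$ is smooth and compactly supported, $T_{me^{is\phi}}$ is bounded on $L^2\times L^2\to L^2$ (indeed $L^2\times L^\infty$) uniformly in $s$, so integrating the Duhamel formula~(\ref{duhamel}) naively and using Bernstein (frequency localization of $m$) to pass from $L^2$ to $L^\infty$ gives
$$
\|u(t)\|_{L^\infty}\lesssim t\,\|f\|_{L^2}\|g\|_{L^2}
$$
(the factor $t$ coming from the time integral $\int_0^t ds$); one may even keep a tiny weight $L^{2,\epsilon}$ on each factor at no real cost.

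Next I would interpolate. Fix $\sigma\in(0,1)$ and $s>\tfrac32\sigma$. Choose $\theta\in(0,1)$ and $\delta>0$ so that the convex combination of the exponents $-1+\delta$ (weight $s_1>1$) and $+1$ (weight $0$) equals $-\sigma$: concretely $\theta(-1+\delta)+(1-\theta)(1) = -\sigma$, i.e.\ $\theta = \tfrac{1+\sigma}{2-\delta}$, which lies in $(0,1)$ for $\delta$ small. The corresponding interpolated weight on each of $f,g$ is $\theta s_1$; as $\sigma\uparrow 1$ we have $\theta\uparrow\tfrac{1}{2-\delta}\cdot 2 \to$ (careful bookkeeping needed) — the point is that by taking $s_1$ close to $1$ and $\delta$ close to $0$, $\theta s_1$ can be made arbitrarily close to $\tfrac32\sigma$ from above, matching the hypothesis $s>\tfrac32\sigma$. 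Because $u$ depends bilinearly on $(f,g)$, the interpolation is carried out by the standard bilinear complex-interpolation trick (analytic family $T_z$, or simply: apply the scalar interpolation inequality $\|h\|_{L^\infty}\le \|h\|_A^\theta\|h\|_B^{1-\theta}$ is \emph{not} available for $L^\infty$ directly, so instead one runs the interpolation on the bilinear map $(f,g)\mapsto u(t)$ between $L^{2,s_1}\times L^{2,s_1}\to L^\infty$ with norm $t^{-1+\delta}$ and $L^{2,0}\times L^{2,0}\to L^\infty$ with norm $t$, using the bilinear version of the Riesz--Thorin / Stein interpolation theorem on the weighted $L^2$ scale $\{L^{2,\beta}\}_\beta$, which is a complex interpolation scale).

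I would then simply read off the conclusion: the bilinear interpolation yields $L^{2,\theta s_1}\times L^{2,\theta s_1}\to L^\infty$ with operator norm $\lesssim (t^{-1+\delta})^\theta (t)^{1-\theta} = t^{-\sigma}$, and since $\theta s_1$ can be taken to be any number exceeding $\tfrac32\sigma$, this is exactly the claimed estimate $\|u(t)\|_{L^\infty}\lesssim t^{-\sigma}\|f\|_{L^{2,s}}\|g\|_{L^{2,s}}$ for every $s>\tfrac32\sigma$.

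The main obstacle is purely bookkeeping: getting the arithmetic of the interpolation exponents to show that the attainable weight $\theta s_1$ can be pushed down to exactly $\tfrac32\sigma^+$ (no better, because of the genuine singularity of $h_\infty$), and making sure the bilinear interpolation is legitimate on the weighted $L^2$ scale — which it is, since $\{L^{2,\beta}(\langle x\rangle^{2\beta}dx)\}$ is an interpolation scale and the map $(f,g)\mapsto u(t)$ is bilinear and continuous at both endpoints with the stated norms. One should also double-check that the crude endpoint bound $\|u(t)\|_{L^\infty}\lesssim t\|f\|_{L^2}\|g\|_{L^2}$ is in fact available with a small weight $L^{2,\epsilon}$ so that the interpolated weight is strictly controlled; this is immediate since enlarging the weight only strengthens the hypothesis.
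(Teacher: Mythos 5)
Your overall strategy --- interpolate the bound of Theorem \ref{thm:d2} against an unweighted ($L^2\times L^2$) bound on the bilinear scale of weighted $L^2$ spaces --- is the same as the paper's, and the interpolation itself on the scale $\{L^{2,\beta}\}$ is unobjectionable. The gap is quantitative, and it is fatal: your zero-weight endpoint is far too lossy. You take $\|u(t)\|_{L^\infty}\lesssim t\,\|f\|_{L^2}\|g\|_{L^2}$, obtained by integrating Duhamel crudely. The paper instead proves
$$\|u(t)\|_{L^\infty}\lesssim \log t\,\|f\|_{L^2}\|g\|_{L^2},$$
by writing $u(t)=\int_0^t e^{i(t-s)a(D)}T_m\bigl(e^{isb(D)}f,e^{isc(D)}g\bigr)\,ds$, using the $L^1\to L^\infty$ dispersive decay $(t-s)^{-1}$ of $e^{i(t-s)a(D)}$ together with $\|T_m\|_{L^2\times L^2\to L^1}$ and the $L^2$-unitarity of the free flows for $s\in(0,t-1)$, and Bernstein (frequency localization of $m$) for $s\in(t-1,t)$. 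With the paper's endpoint, interpolating with parameter $\theta=\sigma$ costs weight $\sigma(\tfrac32+\epsilon)$ and decay $t^{-\sigma}\log t$, which (absorbing the log by nudging $\sigma$ up slightly) gives exactly $s>\tfrac32\sigma$. With your $O(t)$ endpoint, the decay requirement forces $\theta=\tfrac{1+\sigma}{2-\delta}\approx\tfrac{1+\sigma}{2}$, so the weight you can reach is $\approx\tfrac{1+\sigma}{2}s_1$; with the legitimate choice $s_1>\tfrac32$ this is $\ge\tfrac{3(1+\sigma)}{4}$, which is strictly larger than $\tfrac32\sigma$ for every $\sigma<1$, so your bookkeeping cannot close and the claim "$\theta s_1$ can be pushed to $\tfrac32\sigma^+$" is false.

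There is a second, independent problem: your endpoint "decay $t^{-1+\delta}$ with both weights $s_1>1$" misreads the remark after Theorem \ref{thm:d2}. That remark gives $t^{-1}\log t$ only against $\|f\|_{L^{2,1+\epsilon}}\|g\|_{L^{2,1+\epsilon}}$ for one of the two terms; the non-logarithmic term requires weight $\tfrac32+\epsilon$ on one factor (this reflects Theorem \ref{thm:K2}, whose operator $\mathcal{E}$ genuinely needs $s>\tfrac32$). So the symmetric-weight endpoint available from the paper is $s>\tfrac32$, not $s>1$ --- and even if one granted your $s_1>1$ endpoint, the arithmetic above still fails for $\sigma<\tfrac12$, since $\tfrac{1+\sigma}{2}>\tfrac32\sigma$ there. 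To repair the proof, replace your crude endpoint by the $\log t$ unweighted bound described above and interpolate as the paper does.
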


\begin{proof} The statement is obtained by interpolation between Theorem \ref{thm:d2} and the estimate
$$ \|u(t)\|_{L^\infty} \lesssim \log(t)  \|f\|_{L^{2}} \|g\|_{L^{2}},$$
which can be easily obtained as follows
\begin{align*}
 \|u(t)\|_{L^\infty} & \lesssim \int_0^{t-1} (t-s)^{-1} \|T_{m}\|_{L^2 \times L^2 \to L^1} \|e^{isb(D)}f\|_{L^2} \|e^{isc(D)}g\|_{L^2} \, ds \\
 & \hspace{2cm} + \int_{t-1}^1 \|T_{m}\|_{L^2 \times L^2 \to L^1} \|e^{isb(D)}f\|_{L^2} \|e^{isc(D)}g\|_{L^2} \, ds \\
 & \lesssim  \log(t) \|f\|_{L^2}\|g\|_{L^2},
\end{align*}
where we used the dispersive estimates for $s\in(0,t-1)$ and Bernstein inequalities for $s\in(t-1,t)$.
\end{proof}

\begin{theorem} \label{thm:d3}
Assume that $d=3$ and that the assumptions (A1), (A2) and (A3) are satisfied. For initial data $(f,g)$, consider $u$ the solution of~(\ref{equationdebase}). 
Then for every $\epsilon>0$, 
$$ \|u(t)\|_{L^\infty} \lesssim t^{-\frac{3}{2}} \|f\|_{L^{2,\frac{3}{2}+\epsilon}} \|g\|_{L^{2,\frac{3}{2}+\epsilon}},$$
and the time-decay is optimal (since it corresponds to the decay for the linear solution).
\end{theorem}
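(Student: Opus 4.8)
The plan is to adapt the scheme used for Theorem~\ref{thm:d2} and for part $(iii)$ of Theorem~\ref{theoremeasymptotique}, the simplification being that in dimension three every contribution comes with an extra factor $t^{-1/2}$, so that the refined oscillatory estimate in $\sigma$ (Lemma~\ref{chardonneret}) plays no role and no logarithm appears. After decomposing $m$ by smooth cut-offs, one may assume that $\operatorname{Supp} m$ is a small neighbourhood of a point of $\mathcal{R}$, the pieces of $m$ supported away from $\mathcal{T}$ (resp.\ from $\mathcal{S}$) being more favourable, as in Theorem~\ref{theoremeasymptotique}$(i)$ (resp.\ $(ii)$). Then, exactly as in the proof of Theorem~\ref{theoremeasymptotique}$(iii)$, Duhamel's formula and the rescaling $X=x/t$, $\sigma=s/t$ in~(\ref{duhamel}) give
$$
u(t,tX)=t\int_0^1\iint e^{it\psi(\xi,\eta,\sigma,X)}\,m(\xi,\eta)\,\widehat{f}(\eta)\,\widehat{g}(\xi-\eta)\,d\eta\,d\xi\,d\sigma,\qquad \psi=a(\xi)+\sigma\phi(\xi,\eta)+X\xi ,
$$
and one splits $\int_0^1=\int_0^{1/t}+\int_{1/t}^{\epsilon}+\int_{\epsilon}^1=:I+II+III$, with $\epsilon>0$ small enough that $\operatorname{Hess}[a]+\sigma\operatorname{Hess}_\xi[\phi]$ and $\operatorname{Hess}_\eta[\phi]$ remain non-degenerate on $\operatorname{Supp} m$ for $\sigma<\epsilon$ (by~(\ref{pingouin}) and (A1)). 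Using (A3), the term $III$ splits, exactly as in Step~3 of the proof of Theorem~\ref{thm:K2}, into a term of the same type as $II$ and a term that is easily bounded. It therefore remains to bound $I$ and $II$, uniformly in $X$ (for $X$ outside a fixed compact set the relevant critical points leave $\operatorname{Supp} m$ and non-stationary phase gives $O(t^{-N})$), by $t^{-3/2}\|f\|_{L^{2,3/2+\epsilon}}\|g\|_{L^{2,3/2+\epsilon}}$; the weighted data norms enter through the equivalences $\|f\|_{L^{2,s}}\simeq\|\widehat{f}\|_{H^s}$ and $H^{3/2+\epsilon}(\mathbb{R}^3)\hookrightarrow L^\infty\cap\mathcal{C}^{0,\epsilon}$.

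The heart of the matter is the term $II$. There I would apply the stationary phase lemma first in $\xi$ and then in $\eta$, as in Steps~1--2 of the proof of Theorem~\ref{theoremeasymptotique}$(iii)$: the $\xi$-integral produces a factor $t^{-3/2}$, and, since $\operatorname{Hess}_\eta[\psi(\Xi,\eta,\sigma,X)]=\sigma Z(\eta,\sigma,X)$ with $Z$ non-degenerate for $\sigma<\epsilon$, the $\eta$-integral produces a factor $t^{-3/2}\sigma^{-3/2}$. Schematically,
$$
|II|\lesssim t\cdot t^{-3/2}\int_{1/t}^{\epsilon} t^{-3/2}\sigma^{-3/2}\,d\sigma\;\sup|\widehat{f}|\,\sup|\widehat{g}|\;+\;(\text{errors}) ,
$$
and since $\int_{1/t}^{\epsilon}\sigma^{-3/2}\,d\sigma\sim t^{1/2}$, the displayed main term is $\lesssim t^{-2}\cdot t^{1/2}\cdot\|f\|_{L^{2,3/2+\epsilon}}\|g\|_{L^{2,3/2+\epsilon}}=t^{-3/2}\|f\|_{L^{2,3/2+\epsilon}}\|g\|_{L^{2,3/2+\epsilon}}$. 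Contrary to the two-dimensional case, the $\sigma$-integral is here just estimated in absolute value: the weight $\sigma^{-3/2}$ is integrated against the prefactor $t^{-2}$ with room to spare, so that no oscillation in $\sigma$ — in particular no appeal to Lemma~\ref{chardonneret} — is required, and no logarithm arises. The decay $t^{-3/2}$ thus obtained coincides with the linear one, hence is optimal. As for the term $I$: on $[0,1/t]$ one has $\sigma t\le1$, hence no $\eta$-oscillation to exploit, and a stationary phase in $\xi$ alone followed by Cauchy--Schwarz in $\eta$ (using that $\eta\mapsto\Xi(\eta,\sigma,X)-\eta$ is a diffeomorphism with uniformly bounded Jacobian) gives $|I|\lesssim t\cdot t^{-1}\cdot t^{-3/2}\,\|\widehat{f}\|_{L^2}\|\widehat{g}\|_{L^2}\lesssim t^{-3/2}\|f\|_{L^{2,3/2+\epsilon}}\|g\|_{L^{2,3/2+\epsilon}}$; equivalently, $I$ is the linear contribution $e^{ita(D)}T_m(f,g)$ up to a comparable remainder, and one may instead invoke~(\ref{dispersive}) together with $L^{2,3/2+\epsilon}\hookrightarrow L^1$ and a Leibniz-type bound for $T_m$.

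The real obstacle lies in the ``(errors)'' above. The amplitude $\widehat{f}(\eta)\widehat{g}(\xi-\eta)$ is not smooth; it is only of class $H^{3/2+\epsilon}\subset\mathcal{C}^{0,\epsilon}$ in each variable, so the two stationary-phase reductions must be performed quantitatively at this level of regularity rather than as full asymptotic expansions. Most of the error terms are controlled directly from the $\mathcal{C}^{0,\epsilon}$ bound; the delicate one is the remainder $\mathcal{E}(\eta,\sigma,X)$ left by the $\xi$-stationary phase inside $II$, which still has to be integrated in $\eta$. The key point is that $\widehat{g}$ enters $\mathcal{E}$ only through its value $\widehat{g}(\Xi(\eta,\sigma,X)-\eta)$ at the smooth critical point $\Xi$, so that $\eta\mapsto\mathcal{E}(\eta,\sigma,X)$ is itself of class $\mathcal{C}^{0,\epsilon}$, with small norm; the $\eta$-stationary phase may therefore be applied to it as well, and performing the ensuing $\sigma$-integral, which is once again $\sim t^{1/2}$, returns an $O(t^{-3/2}\|f\|_{L^{2,3/2+\epsilon}}\|g\|_{L^{2,3/2+\epsilon}})$ contribution. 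Propagating this $\mathcal{C}^{0,\epsilon}$-bookkeeping uniformly in $X$ through all the terms is the technical core of the argument; I expect it to be genuinely easier than in dimension two, where the corresponding $\sigma$-integral is only logarithmically convergent and thereby forces the sharper analysis of Lemma~\ref{chardonneret}, whereas here the extra half power of decay leaves slack everywhere.
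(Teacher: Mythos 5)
Your treatment of the term $I$, of the region away from $\mathcal{T}$, and the observation that no analogue of Lemma~\ref{chardonneret} is needed in $d=3$ are all reasonable; the gap is at what you yourself call the technical core, and it is not a removable technicality. With $f,g\in L^{2,\frac32+\epsilon}$ the amplitude $\widehat f(\eta)\widehat g(\xi-\eta)m(\xi,\eta)$ is only $H^{3/2+\epsilon}$, hence $\mathcal{C}^{0,\epsilon}$, in each variable. At that regularity a non-degenerate stationary phase in $\xi\in\mathbb{R}^3$ gives only an upper bound of the form $t^{-3/2}\bigl\|\mathcal{F}_\xi[\mathrm{amplitude}]\bigr\|_{L^1}$ (this is exactly the content of Lemma~\ref{lem:oscillatory}); it does not give an expansion ``value at the critical point plus $O(t^{-3/2-\delta})$''. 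Your scheme needs the expansion: the retained term $t^{-3/2}\alpha\,\widehat f(\eta)\widehat g(\Xi-\eta)$ must undergo a second stationary phase in $\eta$ to gain the factor $(t\sigma)^{-3/2}$, and the remainder must be strictly smaller. The structural claim you invoke to control that remainder --- that $\widehat g$ enters it only through its value at the critical point $\Xi(\eta,\sigma,X)$ --- is false: stationary-phase remainders depend on the amplitude globally (through derivatives or an integral representation), and for an amplitude that is merely H\"older-$\epsilon$ in $\xi$ the remainder is generically of the same order as the main term, so it cannot be ``small in $\mathcal{C}^{0,\epsilon}$'' in any sense that survives the $\sigma$-integration. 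If instead you settle for the bound $t^{-3/2}\|\mathcal{F}_\xi[\cdot]\|_{L^1}$, the data has been consumed and the $\eta$-oscillation is lost, and $II$ is then only $O(t^{-1/2})$, far from $t^{-3/2}$. A secondary gap of the same nature: the region away from $\mathcal{S}$ cannot be dispatched ``as in Theorem~\ref{theoremeasymptotique}(ii)'', since the repeated integrations by parts in $\eta$ used there cost one moment of the data each, and $L^{2,\frac32+\epsilon}$ affords only slightly more than one.

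This is precisely why the paper's proof never places the rough data inside an oscillatory integral. It splits $m=m_1+m_2$ according to $2|\phi|\geq|\nabla\phi|$ versus $|\phi|\leq2|\nabla\phi|$; the space-resonant piece $u_1$ is handled by the dispersive estimate combined with the $TT^*$ bilinear bound of Lemma~\ref{lem:1} (Proposition~\ref{prop:u1}); the time-resonant piece $u_2$ is written as a bilinear operator with kernel $K(y,z)$, so that all stationary-phase arguments (Lemma~\ref{lem:oscillatory}, uniform in the extra linear phases $e^{-iy\eta-iz\xi}$) are applied to smooth, dyadically localized symbols, at the price of the decomposition in $|\nabla_\eta\phi|\sim2^j$, the cut-off $\chi\bigl((\nabla_\xi\psi-t^{-1}z)/r\bigr)$, and the weighted error operator $\mathcal{E}$ of Lemma~\ref{lem:E}, which is estimated by interpolating pointwise kernel bounds against Fourier-side bilinear multiplier estimates and dispersive bounds. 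To make your ``$\mathcal{C}^{0,\epsilon}$ bookkeeping'' rigorous you would have to supply quantitative substitutes for exactly these steps, which in effect means redoing the proof of Theorem~\ref{thm:K3}; as it stands, the proposal asserts the crucial estimates rather than proving them, and the asserted mechanism for the error term does not hold.
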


As for Corollary \ref{cor:d2}, we deduce the following:

\begin{corollary} \label{cor:d3} Under the same assumptions, for $\sigma\in[0,\frac{3}{2}]$ and every $s>\sigma$, we have
$$ \|u(t)\|_{L^\infty} \lesssim t^{-\sigma}  \|f\|_{L^{2,s}} \|g\|_{L^{2,s}}.$$
\end{corollary}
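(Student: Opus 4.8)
This is the exact analogue of the proof of Corollary~\ref{cor:d2}, and the plan is again to interpolate Theorem~\ref{thm:d3} against an elementary $L^2\times L^2\to L^\infty$ bound. The key simplification relative to dimension two is that this elementary bound carries \emph{no} logarithmic loss, because in dimension three the linear decay rate $t^{-3/2}$ is integrable in time; as a consequence the interpolation can be run directly, with no $t^{\delta}$-absorption step.

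First I would record the elementary estimate $\|u(t)\|_{L^\infty}\lesssim\|f\|_{L^2}\|g\|_{L^2}$. From Duhamel's formula,
\[
\|u(t)\|_{L^\infty}\lesssim\int_0^t\big\|e^{i(t-s)a(D)}T_m\big(e^{isb(D)}f,e^{isc(D)}g\big)\big\|_{L^\infty}\,ds,
\]
and we split at $s=t-1$. On $[0,t-1]$ we use the dispersive estimate~(\ref{dispersive}) with $p=1$, the Coifman--Meyer bound $\|T_m\|_{L^2\times L^2\to L^1}\lesssim1$ (valid since $m$ is smooth with compact support), and the $L^2$-unitarity of the linear groups; this contributes $\lesssim\int_1^t\tau^{-3/2}\,d\tau\,\|f\|_{L^2}\|g\|_{L^2}\lesssim\|f\|_{L^2}\|g\|_{L^2}$, the convergence $\int_1^\infty\tau^{-3/2}\,d\tau<\infty$ being precisely where $d=3$ enters (in $d=2$ one gets $\log t$ instead). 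On $[t-1,t]$ we use that $T_m(F,G)$ is supported in a fixed frequency ball, so $e^{i(t-s)a(D)}$ acts boundedly on $L^\infty$ for $0\le t-s\le1$, together with $\|T_m(F,G)\|_{L^\infty}\lesssim\|F\|_{L^2}\|G\|_{L^2}$ (estimate the Fourier transform, which is $m$ times the convolution of two $L^2$ functions, in $L^1$); this contributes $O(\|f\|_{L^2}\|g\|_{L^2})$.

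Next I would interpolate. For fixed $t$ the map $(f,g)\mapsto u(t)$ is bilinear, bounded $L^2\times L^2\to L^\infty$ with norm $\lesssim1$ and, by Theorem~\ref{thm:d3}, bounded $L^{2,\frac32+\epsilon}\times L^{2,\frac32+\epsilon}\to L^\infty$ with norm $\lesssim t^{-3/2}$ for each $\epsilon>0$. Since the weighted $L^2$ spaces form a complex interpolation scale, $[L^{2,0},L^{2,s_1}]_\theta=L^{2,\theta s_1}$, bilinear complex interpolation yields, for every $\theta\in[0,1]$,
\[
\|u(t)\|_{L^\infty}\lesssim t^{-\frac{3\theta}{2}}\,\|f\|_{L^{2,\theta(\frac32+\epsilon)}}\,\|g\|_{L^{2,\theta(\frac32+\epsilon)}}.
\]
Given $\sigma\in(0,\frac32)$ and $s>\sigma$, take $\theta=\frac{2\sigma}{3}$ so that $\frac{3\theta}{2}=\sigma$, and then pick $\epsilon>0$ small enough that $\theta(\tfrac32+\epsilon)=\sigma(1+\tfrac{2\epsilon}{3})\le s$ (possible exactly because $s>\sigma$); since $\|\cdot\|_{L^{2,\theta(3/2+\epsilon)}}\le\|\cdot\|_{L^{2,s}}$, this gives the claim. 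The endpoint $\sigma=\frac32$ is Theorem~\ref{thm:d3} itself, and $\sigma=0$ follows from the elementary estimate together with $L^{2,s}\hookrightarrow L^2$ for $s\ge0$.

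There is no serious obstacle here; the only points that merit some care are the frequency-localization arguments underlying the elementary bound on $[t-1,t]$ (to make sense of $\|e^{i(t-s)a(D)}\cdot\|_{L^\infty}$ and of $\|T_m(F,G)\|_{L^\infty}$), and checking that weighted $L^2$ spaces together with $L^\infty$ form an admissible configuration for bilinear complex interpolation — both entirely standard. The one genuinely dimension-sensitive feature, and the reason the argument is cleaner than for Corollary~\ref{cor:d2}, is the absence of the logarithmic factor.
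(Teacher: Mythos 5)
Your argument is correct and is essentially the paper's own: the paper proves Corollary~\ref{cor:d3} exactly ``as for Corollary~\ref{cor:d2}'', i.e.\ by interpolating Theorem~\ref{thm:d3} against the elementary Duhamel bound $\|u(t)\|_{L^\infty}\lesssim \|f\|_{L^2}\|g\|_{L^2}$ (dispersive estimate plus a Bernstein-type argument near $s=t$), which in $d=3$ carries no logarithm since $\int_1^\infty \tau^{-3/2}\,d\tau<\infty$. You have merely made explicit the interpolation bookkeeping ($\theta=\tfrac{2\sigma}{3}$, $\epsilon$ small so that $\theta(\tfrac32+\epsilon)\le s$) and the frequency-localization details that the paper leaves implicit, so there is nothing to correct.
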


\begin{proof}[Proof of theorems \ref{thm:d2} and \ref{thm:d3}]
Recall that the time-resonant set is given by ${\mathcal T}=\{(\xi,\eta), \phi(\xi,\eta)=0\}$, the space-resonant set by ${\mathcal S}=\{(\xi,\eta), \nabla_\eta \phi(\xi,\eta)=0\}$, whereas the space-time resonant set is the intersection of these two $\mathcal{R}= {\mathcal S} \cap {\mathcal T}$.

Define the subsets $\Omega_1$ and $\Omega_2$ of $\mathbb{R}^d \times \mathbb{R}^d$ by
\begin{itemize}
\item $\Omega_1$ is the set where $2 |\phi|\geq |\nabla \phi|$;
\item $\Omega_2$ is the set where $|\phi|\leq 2 |\nabla \phi|$.
\end{itemize}
The set $\Omega_1$ can be thought of as a truncated ``cone'' around the sub-manifold ${\mathcal S}$ with a top given by ${\mathcal R}$; it contains the space-resonances. Similarly for $\Omega_2$ around the sub-manifold ${\mathcal T}$, which contains the time-resonances. From this decomposition, we define two symbols $m_1$ and $m_2$ such that
\begin{itemize}
\item $m_1$, $m_2$ are non-negative and smooth away from $\mathcal{R}$, and add up to $m$;
\item $m_1 = 1$ on $\Omega_2^\complement$ and $m_1 = 0$ on $\Omega_1^\complement$.
\end{itemize}
Finally, we choose $m_1$ and $m_2$ so that they enjoy natural derivative bounds, namely
$$
|\partial_\xi^\alpha \partial_\xi^\eta m_i(\xi,\eta)| \lesssim \frac{1}{\operatorname{dist}((\xi,\eta),\mathcal{R})^{|\alpha|+|\beta|}} \quad \mbox{for $i=1,2$, any $\alpha$ and $\beta$}.
$$
This decomposition gives rise from the smooth symbol $m$ to two symbols $m_i$ and we have
$$ u(t)=u_1(t)+u_2(t)$$
with
$$ \widehat{u_i}(t,\xi) \overset{def}{=}  e^{ita(\xi)} \int_0^t \int e^{is\phi(\xi,\eta)} m_i(\xi,\eta) \widehat{f}(\eta) \widehat{g}(\xi-\eta) \,d\eta\,ds.$$
The next subsections are devoted to separately estimating these two functions. By combining these results (in the case $d=2$, see Proposition \ref{prop:u1} and Theorem \ref{thm:K2} and in the case $d=3$, Proposition \ref{prop:u1} and Theorem \ref{thm:K3}), the theorems follow.
\end{proof}

\subsection{Study of $u_1$ corresponding to space-resonances}

\begin{proposition}[Estimate of $u_1$] \label{prop:u1} Let $d\in\{2,3\}$, then for every $s>\frac{d}{2}$
\begin{equation} \label{eq:u1} \|u_1(t)\|_{L^\infty} \lesssim t^{-\frac{d}{2}}  \|f\|_{L^{2,s}} \|g\|_{L^{2,s}}. \end{equation}
\end{proposition}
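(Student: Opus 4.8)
The plan is to estimate $u_1$ directly from its Duhamel representation, exploiting that on the support of $m_1$ the phase $\phi$ is comparable to $|\nabla_\eta \phi|$ (up to the singular locus $\mathcal{R}$), so the $s$-integral is, in effect, not the dangerous direction; the only genuinely oscillatory variable is $\eta$, and the space resonant set $\mathcal{S}$ is where stationary phase in $\eta$ degenerates. Concretely, I would write
$$
\widehat{u_1}(t,\xi) = e^{ita(\xi)} \int_0^t \int e^{is\phi(\xi,\eta)} m_1(\xi,\eta) \widehat f(\eta)\widehat g(\xi-\eta)\,d\eta\,ds,
$$
pass to $u_1(t,x)$ by inverting the Fourier transform in $\xi$, and combine the $e^{ix\xi + ita(\xi)}$ oscillation with the $e^{is\phi}$ oscillation. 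The target decay $t^{-d/2}$ is exactly the linear dispersive rate, so the natural route is a stationary-phase (or $TT^*$/dispersive) estimate in $\xi$ producing the $t^{-d/2}$ factor, with the $\eta$-integral and the $s$-integral absorbed into constants controlled by the weighted norms of $f$ and $g$.

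The key steps, in order, would be: (1) Split the $s$-integral at $s\sim t$ is not needed here — instead integrate by parts in $s$ using $\frac{1}{i\phi}\partial_s e^{is\phi}=e^{is\phi}$, which is legitimate since $|\phi|\gtrsim |\nabla_\eta\phi|$ on $\mathrm{Supp}\,m_1$ away from $\mathcal{R}$; this replaces $\int_0^t e^{is\phi}ds$ by boundary terms $\frac{e^{it\phi}-1}{i\phi}$ and reduces $u_1$ to a genuinely bilinear Fourier multiplier applied to $e^{itb(D)}f$ and $g$ (plus a term with $g$ replaced by $e^{itc(D)}g$, and one with no extra propagator). (2) Control the symbol $\frac{m_1}{\phi}$: near $\mathcal{R}$ it has a singularity like $\mathrm{dist}(\cdot,\mathcal{R})^{-1}$, but since $\mathcal{R}$ has codimension $d+1$ in $\mathbb{R}^{2d}$ and we only integrate in $\eta\in\mathbb{R}^d$ with $\xi$ fixed, for a.e.\ fixed $\xi$ the slice of $\mathcal{R}$ has codimension $\geq 1$ in $\eta$-space, so $\frac{1}{\phi}$ is locally integrable in $\eta$; one must track dependence on $\xi$ carefully so the singularity is harmless after the $\xi$-stationary phase. (3) Apply the linear dispersive estimate~(\ref{dispersive}) or a direct stationary-phase expansion in $\xi$ to each resulting term, gaining $t^{-d/2}$ (from $e^{i(x\xi+ta(\xi))}$, using~(\ref{pingouin}) — nondegeneracy of $\mathrm{Hess}[a]$). (4) Finally, bound the remaining $\eta$-integral and residual $s$-integral: write $\widehat f(\eta)\widehat g(\xi-\eta)$ and use Cauchy–Schwarz in $\eta$ together with the weighted norms; the weight $s>d/2$ is precisely what is needed so that $\langle\eta\rangle^{-s}\in L^2_\eta$, which is how $\|f\|_{L^{2,s}}\|g\|_{L^{2,s}}$ enters. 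A cleaner packaging may be to use the standard lemma that $\|T_\mu(F,G)\|_{L^1}\lesssim \|\mu\|_{(\text{suitable})}\|F\|_{L^2}\|G\|_{L^2}$ and then $\|u_1\|_{L^\infty}\lesssim \int |t-s|^{-d/2}\|\cdots\|_{L^1}$, but the $1/\phi$ singularity forces a more hands-on treatment near $\mathcal{R}$.

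I expect the main obstacle to be item (2) combined with (4): handling the $\frac{1}{\phi}$ singularity at $\mathcal{R}$ uniformly while still extracting the full $t^{-d/2}$ and the sharp weight $s>d/2$. The naive estimate loses either a power of $t$ or a power of the weight near the resonant set, because the $\xi$-stationary point $\Xi$ can sit close to the $\xi$-projection of $\mathcal{R}$, where $\phi$ is small. The fix will be a dyadic decomposition in $\mathrm{dist}((\xi,\eta),\mathcal{R})$ (or in the size of $\phi$): on the dyadic shell $|\phi|\sim 2^{-k}$ the symbol $\frac{m_1}{\phi}$ has size $2^k$ but the shell has measure $\lesssim 2^{-k}$ in the relevant directions, and on each shell one does stationary phase in $\xi$ with constants depending controllably on $k$; summing the geometric-type series in $k$ recovers the claimed bound. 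One must also be careful that after integrating by parts in $s$, the boundary term at $s=t$ carries $e^{it\phi}$, which partly cancels the $e^{ita(\xi)}$ in the wrong way only on $\mathcal{T}$ — but $\mathrm{Supp}\,m_1$ stays away from $\mathcal{T}$ except near $\mathcal{R}$, so this is consistent with the dyadic scheme. Everything else — the $\eta$-integration, the Cauchy–Schwarz step, the bookkeeping of the two propagator terms — is routine.
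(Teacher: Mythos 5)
Your first moves coincide with the paper's: integrate out the $s$-variable to write $u_1 = T_{m_1/\phi}(e^{itb(D)}f,e^{itc(D)}g) - e^{ita(D)}T_{m_1/\phi}(f,g)$, decompose dyadically according to $|\phi|\sim 2^j$ near $\mathcal{R}$, extract $t^{-d/2}$ from a linear dispersive mechanism, and conclude via $L^{2,s}\subset L^1$ for $s>d/2$. (Two small inaccuracies along the way: for the boundary term at $s=t$ the factor $e^{ita(\xi)}$ cancels identically against $e^{it\phi}$, not ``only on $\mathcal{T}$'', so the decay there comes from $\operatorname{Hess}[c]$, i.e.\ from $e^{itc(D)}g$, not from $\operatorname{Hess}[a]$.) The genuine gap is in your per-shell estimate. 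Your counting — symbol of size $2^{-j}$ on a shell whose measure in the relevant directions is $\sim 2^{j}$ — produces a contribution of size $O(t^{-d/2})$ \emph{per shell}, uniformly in $j$: this is not a geometric series, and after truncating at $|\phi|\gtrsim 1/t$ it yields $t^{-d/2}\log t$, not the stated $t^{-d/2}$ (which matters: in $d=3$ the theorem has no logarithm, and in $d=2$ the logarithm is produced only by the time-resonant piece $u_2$). Moreover, your route of doing stationary phase in $\xi$ with $\widehat f(\eta)\widehat g(\xi-\eta)$ sitting inside the amplitude requires control of derivatives of $\widehat f,\widehat g$ (hence weights $s>\tfrac d2+1$), or else a kernel formulation in which $f,g$ are factored out — and the latter brings you back exactly to the lossy size-times-measure count above.

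What closes the sum in the paper is a quantitative bilinear multiplier estimate that your sketch does not contain: Lemma~\ref{lem:1}, proved by a $TT^*$ argument exploiting simultaneously the size $2^{j(d+1)}$ of $\Omega_1^j$, the $O(2^j)$ measure of its projections onto the $\eta$ and $\xi-\eta$ variables, and the derivative bounds $|\partial^{\alpha}\chi_{\Omega_1^j}|\lesssim 2^{-j|\alpha|}$, giving $\|T_{\chi_{\Omega_1^j}}\|_{L^2\times L^\infty\to L^\infty}+\|T_{\chi_{\Omega_1^j}}\|_{L^2\times L^1\to L^1}\lesssim 2^{j\frac{d+1}{4}}$. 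Feeding one input through the frequency projection $P_{F_j}$ (so that $\|P_{F_j}f\|_{L^2}\lesssim 2^{j/2}\|f\|_{L^1}$) and the other through the linear dispersive estimate then gives a net factor $2^{j\frac{d-1}{4}}$ per shell, which is the geometric gain that makes the dyadic sum converge with the sharp rate and the sharp weight. Without this (or an equivalent mechanism exploiting the narrow support in \emph{both} output frequency projections, not just the measure of the shell), your scheme loses at least a logarithm and cannot reach~\eqref{eq:u1} as stated.
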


\begin{proof}
Integrating out in the Duhamel formula, $u_1$ can be written
$$ u_1(t) = I_t(f,g)-II_t(f,g),$$
with
$$ I_t(f,g) = T_{m_1/\phi}(e^{itb(D)} f, e^{itc(D)}g)$$
and
$$ II_t = e^{ita(D)} T_{m_1/\phi}(f,g).$$
Split the domain $\Omega_1$ into almost disjoint regions
$$ 
\Omega_1 = \cup_j \left[ \Omega_1\cap \{ 2^{j-1} \leq |\phi| < 2^{j+2}\}\right]\overset{def}{=}\cup_j \Omega^j_1 .
$$
We can assume that the union only runs over $j \leq 0$: $\Omega = \cup_{j\leq 0} \Omega^j_1$. Since $\Omega_1$ is a truncated ``cone'' around the sub-manifold ${\mathcal S}$ with top the manifold ${\mathcal R}$ of dimension $d-1$, the  region $\Omega_1^j$ has a volume of order
\begin{equation}
|\Omega_1^j| \lesssim 2^{j(d+1)}. \label{eq:vol1}
\end{equation}
Next, associate to the partition $\Omega^j_1$ a smooth partition of unity $\chi_{\Omega^j_1}$ in a natural way: we require that 
$$
\sum_{j\leq 0} \chi_{\Omega^j_1}=1 \;\;\mbox{on} \;\; \Omega_1, \quad \operatorname{Supp} \chi_{\Omega_1^j} \subset \Omega_1^{j}, \quad \mbox{and} \quad |\partial_\xi^\alpha \partial_\eta^\beta \chi_{\Omega_1^j}(\xi,\eta)| \lesssim 2^{-j(|\alpha|+|\beta|)}
$$  
for any $\alpha$ and $\beta$.
By defining $E_j\subset \R^d$ the projection of $\Omega^j_1$ on the $(\xi-\eta)$-variable and $F_j$ on the $\eta$-variable, we have
$$ |E_j|+|F_j| \lesssim 2^{j}.$$
By the dispersive estimates (\ref{dispersive})
\begin{align*}
 \|I_t(f,g)\|_{L^\infty} & \lesssim \sum_{j \leq 0}  \left\| T_{\chi_{\Omega^j_1} m_1/\phi} (e^{itb(D)} f, e^{itc(D)}g) \right\|_{L^\infty} \\
 & \lesssim \sum_{j \leq 0} \left\| T_{\chi_{\Omega^j_1} m_1/\phi}  \right\|_{L^{2} \times L^{\infty} \to L^\infty} t^{-\frac{d}{2}} \|P_{F_j} f\|_{L^{2}} \| g \|_{L^{1}},
\end{align*}
where $P_{F_j}$ is the Fourier multiplier restricting to frequencies of $F_j$. 
Using Lemma \ref{lem:1}, we know that 
$$ \left\| T_{\chi_{\Omega^j_1} m_1/\phi}  \right\|_{L^{2} \times L^{\infty} \to L^\infty} \lesssim 2^{-j} 2^{j(\frac{d+1}{4})}.$$
Consequently, with $|F_j|\lesssim 2^{j}$ we deduce that 
\begin{align*}
 \|I_t(f,g)\|_{L^\infty}  & \lesssim \sum_{j \leq 0} 2^{j(\frac{d+1}{4})-j+j\frac{1}{2}} t^{-\frac{d}{2}} \|f\|_{L^{1}} \| g \|_{L^{1}} \\
   & \lesssim  t^{-\frac{d}{2}} \|f\|_{L^{2,s}} \| g \|_{L^{2,s}},
\end{align*}
as soon as $s>\frac{d}{2}$ (since $d\geq 2$). \\
Let us now deal with the second term $II_t$: 
\begin{align*}
 \|II_t(f,g)\|_{L^\infty} & \lesssim t^{-d/2} \|T_{m_1/\phi}(f,g) \|_{L^1} \\
 & \lesssim  t^{-d/2} \sum_{j \leq 0}   \|T_{m_1\chi_{\Omega^j_1}/\phi}\|_{L^{2} \times L^{1} \to L^{1}}   \|P_{F_j} f \|_{L^{2}} \| g \|_{L^{1}}.
\end{align*}
We conclude as previously using $|F_j|\lesssim 2^{j}$ and Lemma \ref{lem:1}. So for $s>\frac{d}{2}$, we obtain
\begin{align*}
 \|II_t(f,g)\|_{L^\infty}  & \lesssim  t^{-d/2}  \|f\|_{L^{2,s}} \|g\|_{L^{2,s}}.
 \end{align*}
\end{proof}

\begin{lemma} \label{lem:1} Consider $\chi_{\Omega^j_1}$ as above. It satisfies 
$$ \left\| T_{\chi_{\Omega^j_1}}  \right\|_{L^{2} \times L^{\infty} \to L^\infty} + \left\| T_{\chi_{\Omega^j_1}}  \right\|_{L^{2} \times L^{1} \to L^1} \lesssim  2^{j(\frac{d+1}{4})}.$$ 
\end{lemma}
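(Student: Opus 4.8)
The plan is to estimate the bilinear multiplier norm of $T_{\chi_{\Omega_1^j}}$ directly from the size and geometry of the support of $\chi_{\Omega_1^j}$, using the elementary fact that a bilinear Fourier multiplier whose symbol has small support gives a bounded operator with norm controlled by the measure of that support. The key observations are: first, $\Omega_1^j$ is a thin neighborhood of the space-resonant manifold $\mathcal{S}$, truncated near $\mathcal{R}$, and so its $\xi$-section (for fixed $\eta$, or rather fixed $\xi-\eta$) has small diameter — of order $2^j$ in the directions transverse to $\mathcal{S}$ — while the full set has volume $\lesssim 2^{j(d+1)}$ by \eqref{eq:vol1}. Second, I would exploit the cone structure: along $\mathcal{S}$ the set extends a distance $\sim 1$ (it is a genuine truncated cone of dimension $d$ over a $(d-1)$-dimensional top), but in the $(d-1)$ transverse directions and the $\phi$-direction it is of width $\sim 2^j$. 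The subtlety is that the relevant $L^2\times L^\infty\to L^\infty$ and $L^2\times L^1\to L^1$ norms are controlled not by the full volume but by a one-dimensional "slice" measure paired with an $L^2$ estimate in the complementary variables, which is why the exponent comes out as $2^{j(d+1)/4}$ rather than $2^{j(d+1)/2}$.

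First I would reduce to a model computation. For $T_{\chi}(F,G)$ with $\chi$ supported in a set $W$, one has the representation $T_\chi(F,G)(x)=\int K(x,y,z)F(y)G(z)\,dy\,dz$ with $K(x,y,z)=\widecheck{\chi}(x-y,x-z)$ (suitably interpreted), and hence
$$
\|T_\chi\|_{L^2\times L^\infty\to L^\infty}\lesssim \sup_x \left(\int \left|\widecheck{\chi}(x-y,\,\cdot\,)\right|\,\|\cdot\|_{L^1_z}^{?}\right),
$$
so it is cleaner to argue by duality/Cauchy–Schwarz: write $\|T_\chi(F,G)\|_{L^\infty}\lesssim \|F\|_{L^2}\,\sup_x \| \widecheck{\chi}(x-\cdot,x-z)\|_{L^2_y L^1_z}$ and similarly $\|T_\chi(F,G)\|_{L^1}\lesssim \|F\|_{L^2}\|G\|_{L^1}\,\||\widecheck\chi|\|_{L^1_\xi L^2_\cdots}$-type quantities. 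Using Plancherel in the $L^2$ variable and the trivial $L^1$ bound $\|\widehat h\|_{L^\infty}\le\|h\|_{L^1}$ in the other, these norms reduce to products of the form $|E_j|^{1/2}\cdot|F_j|^{1/2}$ where $E_j,F_j$ are the projections of $\Omega_1^j$ onto the $(\xi-\eta)$- and $\eta$-axes — but this naive bound gives only $2^{j}$, not the claimed $2^{j(d+1)/4}$. The improvement comes from using the derivative bounds $|\partial^\alpha_\xi\partial^\beta_\eta\chi_{\Omega_1^j}|\lesssim 2^{-j(|\alpha|+|\beta|)}$ together with the volume bound $|\Omega_1^j|\lesssim 2^{j(d+1)}$: integrating by parts, $\|\widecheck{\chi_{\Omega_1^j}}\|_{L^1(\mathbb{R}^d\times\mathbb{R}^d)}\lesssim 1$ uniformly in $j$, while $\|\widecheck{\chi_{\Omega_1^j}}\|_{L^2}\lesssim |\Omega_1^j|^{1/2}\lesssim 2^{j(d+1)/2}$ by Plancherel. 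Interpolating, or more precisely splitting the convolution kernel into its values on the "core" where $|x-y|,|x-z|\lesssim 2^{-j}$ and its rapidly-decaying tail, one obtains a bound of the form $2^{j(d+1)/2}$ times the $L^2$ norm of a cutoff living on a set of measure $\sim 2^{-j(d+1)}$... which is self-defeating. The correct route, which I would follow, is the Coifman–Meyer / Tomas–Stein type estimate: for a symbol supported in a set of measure $V$, one has $\|T_\chi\|_{L^2\times L^\infty\to L^\infty}\lesssim V^{1/2}$ only when the support sits in a product set; in general one uses $\|T_\chi\|_{L^2\times L^\infty\to L^\infty}\lesssim \||E_j|^{1/2}\widecheck{\chi}\|$-type interpolation giving $\bigl(|\Omega_1^j|\bigr)^{1/4}\,\bigl(\text{slice}\bigr)^{1/4}$, and plugging in $|\Omega_1^j|\lesssim 2^{j(d+1)}$ together with the transverse width $\sim 2^{j}$ of a slice yields exactly $2^{j(d+1)/4}$.

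The main obstacle I anticipate is getting the geometry of $\Omega_1^j$ right — in particular verifying that, after localizing the symbol to a small ball (as permitted since $\operatorname{Supp}m$ may be taken small) and using assumptions (A1) and (A2), the set $\Omega_1\cap\{|\phi|\sim 2^j\}$ really is a "slab" of dimensions $\sim 1\times\underbrace{2^j\times\cdots\times2^j}_{d-1}\times 2^j$ and not something more degenerate, so that both the volume bound $2^{j(d+1)}$ and the correct distribution of this volume across the $2d$ variables $(\xi,\eta)$ hold. Once this is pinned down, the functional-analytic step is routine: decompose $\chi_{\Omega_1^j}$ along a tiling of $\Omega_1^j$ by essentially cubical boxes of side $2^j$ adapted to these directions, apply the product-set bound on each box (where it is sharp and gives $(\text{box volume})^{1/4}\times(\text{box slice})^{1/4}$), and sum the $\sim 2^{-j(d-1)}$ many almost-orthogonal pieces using the rapid decay of each kernel. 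I would expect the two norms ($L^2\times L^\infty\to L^\infty$ and $L^2\times L^1\to L^1$) to be handled by the identical argument, differing only in which variable carries the $L^1$ versus $L^\infty$ input, so no separate treatment is needed.
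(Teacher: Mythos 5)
Your sketch never actually establishes any inequality that controls $\left\| T_{\chi_{\Omega^j_1}} \right\|_{L^{2} \times L^{\infty} \to L^\infty}$, and that is precisely where the difficulty lies: an $L^\infty$ input has no usable Fourier-side bound, so nothing of the form ``operator norm $\lesssim$ (support measure)$^{\text{power}}$'' comes for free. The paper handles this by passing to the adjoint, $\| T_{\chi_{\Omega^j_1}}\|_{L^{2} \times L^{\infty} \to L^\infty} = \|T_j\|_{L^1\times L^\infty\to L^2}$, and running a $TT^*$/Schur-type argument on the kernel, which reduces everything to $\sup_x \|\widehat{F_x}\|_{L^1}^{1/2}$ with $F_x(\alpha,\beta)=\int \chi_{\Omega^j_1}(\xi-\alpha,\xi)\chi_{\Omega^j_1}(\xi-\beta,\xi)e^{ix\xi}\,d\xi$. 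The exponent $\frac{d+1}{4}$ then comes from Cauchy--Schwarz between $\|F_x\|_{L^2}\lesssim \|F_x\|_{\infty}|\operatorname{Supp}F_x|^{1/2}\lesssim 2^{jd}\cdot 2^{j(d+1)/2}$ — where the support bound uses (A1) to show that the $\alpha$-projection of the set $\{\phi(\xi-\alpha,\xi)=0,\ \nabla_2\phi(\xi-\alpha,\xi)=0\}$ is $(d-1)$-dimensional, so $|\operatorname{Supp}F_x|\lesssim 2^{j(d+1)}$ — and the weighted bound $\||\cdot|^{2d}\widehat{F_x}\|_{L^2}\lesssim 2^{j(1-d)/2}$ coming from the derivative bounds on the symbol. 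None of these ingredients (the dualization, the $TT^*$ reduction and the square root it produces, the two $L^2$ bounds being interpolated, the use of (A1)) appears in your proposal, and the second norm is not ``identical'' either: it is obtained by duality, after checking the dual symbol $\chi_{\Omega_1^j}(\eta-\xi,\eta)$ admits the same treatment.

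In place of this you invoke bounds that are unjustified or false. The claim $\|\mathcal{F}^{-1}\chi_{\Omega^j_1}\|_{L^1}\lesssim 1$ fails: $\Omega^j_1$ is a slab in $\mathbb{R}^{2d}$ of extent $\sim 1$ in $d-1$ directions and $\sim 2^j$ in the remaining $d+1$ directions (your description $1\times (2^j)^{d}$ is dimensionally inconsistent both with $\mathbb{R}^{2d}$ and with the volume \eqref{eq:vol1}), so the derivative bounds $2^{-j(|\alpha|+|\beta|)}$ do not make the inverse Fourier transform uniformly integrable. The asserted ``product-set bound'' yielding $(\text{box volume})^{1/4}(\text{box slice})^{1/4}$ is not a standard fact, and your own arithmetic with it does not give the stated answer: $|\Omega^j_1|^{1/4}$ alone is already $2^{j(d+1)/4}$, so the extra factor $(\text{slice})^{1/4}\sim 2^{j/4}$ changes the exponent. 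Finally the tiling-and-summation step breaks down: for a box of side $2^j$ the genuine bound for $L^2\times L^\infty\to L^\infty$ is $\sim 2^{jd/2}$ (the $L^2$ factor pays the full $\eta$-support of the box), there is no almost-orthogonality available for $L^\infty$-valued estimates, and even an optimistic $\ell^2$ summation over the $\sim 2^{-j(d-1)}$ boxes gives $2^{j/2}$ (an $\ell^1$ sum gives $2^{j(1-d/2)}$), which for $d=2,3$ is strictly weaker than $2^{j(d+1)/4}$ and in particular too weak for the summation over $j\leq 0$ in the proof of Proposition \ref{prop:u1}. So the proposal, as it stands, does not close.
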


\begin{proof}
The dual of the first estimate is the second one, with the symbol $\chi_{\Omega^j_1}(\xi,\eta)$ replaced by $\chi_{\Omega^j_1}(\eta-\xi,\eta)$, and one checks that the same proof as below applies to the symbol $\chi_{\Omega^j_1}(\eta-\xi,\eta)$. 

For this reason, we only prove the first estimate, following ideas in \cite[Proposition 5.3]{BG2}. Let us simplify the notation and write $T_j$ for the first adjoint of $T_{\chi_{\Omega^j_1}}$ such that 
$$ \left\| T_{\chi_{\Omega^j_1}}  \right\|_{L^{2} \times L^{\infty} \to L^\infty} = \left\| T_j  \right\|_{L^{1} \times L^{\infty} \to L^2}.$$
So $T_j$ is the bilinear Fourier multiplier associated to the symbol $(\xi,\eta) \to \chi_{\Omega^j_1}(-\eta,-\xi)$.
In order to estimate the $L^2$-norm of $T_j$, we use a $TT^*$ argument:
$$
\left\| T_j (f,g) \right\|_{L^2}^2 =\iiiint f(x^1) \overline{f}(x^2) g(y^1) \overline{g}(y^2) K_j(x^1-y^1,y^2-x^2,y^1-y^2)\,dx^1\,dx^2\,dy^1\,dy^2$$
where
$$
K_j(a,b,c) = \frac{1}{(2\pi)^d} \iiint \chi_{\Omega^j_1}(\eta,\xi) \chi_{\Omega^j_1}(\zeta,\xi) e^{-i(\eta a + \zeta b + \xi c)} d\eta \, d\zeta \, d\xi.
$$
Thus
\begin{equation*}
\begin{split}
& \left\| T_j (f,g) \right\|_{L^2}^2 \\
& \qquad \leq \|g\|_{L^\infty}^2 \iint |f(x^1)| |f(x^2)| \iint \left| K_j(x^1-y^1,y^2-x^2,y^1-y^2) \right| \,dy^1\,dy^2\,dx^1\,dx^2, \\
& \qquad \lesssim  \left(\sup_{x^1,x^2} \iint \left| K_j(x^1-y^1,y^2-x^2,y^1-y^2)\right| \,dy^1\,dy^2\right) \|g\|_{L^\infty}^2 \|f\|_{L^1}^2.
\end{split}
\end{equation*}
Everything now boils down to estimating
$$
\iint | K_j(x^1-y^1,y^2-x^2,y^1-y^2) | \,dy^1\,dy^2.
$$
A change of variables gives
\begin{align*}
K_j(x^1-y^1,y^2-x^2,y^1-y^2) & = \frac{1}{(2\pi)^{d}} \iint F_{x}(\alpha,\beta) e^{i\alpha(x^1-y^1)} e^{i\beta(y^2-x^2)} \,d\alpha\, d\beta \\
 & =\frac{1}{(2\pi)^{d/2}} \widehat{F_x} (y^1-x^1,x^2-y^2)
\end{align*}
where
\begin{equation}
\label{lapin}
F_{x}(\alpha,\beta) \overset{def}{=} \int \chi_{\Omega^j_1} (\xi-\alpha,\xi) \chi_{\Omega^j_1} (\xi-\beta,\xi) e^{ix\xi} \,d\xi\;\;\;\;\mbox{and}\;\;\;\;x \overset{def}{=} x^2 - x^1.
\end{equation}
Combining the last few lines,
\begin{align}
\left\| T_{\chi_{\Omega^j_1}}  \right\|_{L^{2} \times L^{\infty} \to L^\infty} & = \left\| T_j  \right\|_{L^{1} \times L^{\infty} \to L^2} \nonumber \\ 
 & \lesssim \left(\sup_x \|\widehat{F_x}\|_{L^1}\right)^{\frac{1}{2}}. \label{lievre}
\end{align}
Define 
$$ O^j\overset{def}{=} \{(\alpha,\xi), (\xi-\alpha,\xi)\in \Omega_1^j\}$$.
\begin{itemize}
\item First observe that $O^j$ is included in a $\sim 2^j$ neighborhood of the sub-manifold $\{(\xi,\eta), \nabla_2 \phi(\xi-\alpha,\xi)=0\}$ (here, we denoted $\nabla_2 \phi$ for the derivative of $\phi(\cdot,\cdot)$ with respect to the second variable).
Since 
$$\nabla_\xi \nabla_2 \phi(\xi-\alpha,\xi)=\operatorname{Hess}[b] (\xi) \quad \mbox{and} \quad \nabla_\alpha \nabla_2 \phi(\xi-\alpha,\xi)= \operatorname{Hess}[c](-\alpha)$$ 
and due to the non-degeneracy of the Hessian matrices of $b$ and $c$, this sub-manifold can be parameterized by $\xi$ or $\alpha$. Thus, for $\alpha$ fixed, $\chi_{\Omega^j_1}(\xi-\alpha,\xi)$ is not zero for $\xi$ in a ball of radius $\sim 2^j$. This implies that
\begin{equation}
\label{gelinotte}
\|F_x\|_{L^\infty(\R^{2d})} \lesssim 2^{dj}.
\end{equation}
\item Next, observe that the projection on the $\alpha$ variable of $O^j$ has size $\sim 2^j$. To see this, notice that $O^j$ is contained in a $\sim 2^j$ neighborhood of the set defined by 
\begin{equation}
\label{tourterelle}
\phi(\xi-\alpha,\xi)=0 \quad \mbox{and} \quad \nabla_2 \phi(\xi-\alpha,\xi) = 0.
\end{equation}
The linearization of these two conditions around a point $(\alpha_0,\xi_0)$ is as follows: the point $(\alpha_0,\xi_0)+(d\alpha,d\xi)$ satisfies them if
$$
\nabla_1 \phi(\xi_0-\alpha_0,\xi_0) \cdot(d\xi - d\alpha) = 0 \quad \mbox{and} \quad \operatorname{Hess}[b] (\xi_0) d\xi +  \operatorname{Hess}[c](-\alpha_0) d\alpha = 0,
$$
which is equivalent to
$$
\left\{ \begin{array}{l}
(\operatorname{Hess}[b](\xi_0))^{-1}(\operatorname{Hess}[b] (\xi_0) + \operatorname{Hess}[c](-\alpha_0))d\alpha \in (\nabla_1 \phi(\xi_0-\alpha_0,\xi_0))^\perp \\
\operatorname{Hess}[b] (\xi_0) d\xi + \operatorname{Hess}[c](-\alpha_0) d\alpha = 0.
\end{array} \right.
$$
(notice that $\operatorname{Hess}[b ](\xi_0) + \operatorname{Hess}[c](-\alpha_0) = \operatorname{Hess}_\eta [\phi](\xi_0-\alpha_0,\xi_0)$, which, by (A1), is invertible).
These equations show that the projection of the linearization of the set defined by (\ref{tourterelle}) in the $\alpha$ variable is $(d-1)$ dimensional. By the implicit function theorem, this remains true for the projection of the set (\ref{tourterelle}). Since $O^j$ is a $\sim 2^j$ neighborhood of this set, we obtain that its projection in the $\alpha$ variable has size $\sim 2^j$. Finally, if $(\alpha,\xi)$ and $(\beta,\xi)$ belong to $O^j$, we already saw that $\alpha = \beta + O(2^j)$. Therefore
\begin{equation}
\label{gelinotte2}
|\operatorname{Supp} F_x | \lesssim 2^{j(1+d)}.
\end{equation}
\end{itemize}
The estimates~(\ref{gelinotte}) and~(\ref{gelinotte2}) imply that
$$
\|F_x\|_{L^2(\R^{2d})} \lesssim \|F_x\|_{L^\infty(\R^{2d})} |\operatorname{Supp} F_x |^{1/2} \lesssim 2^{j\frac{3d+1}{2}}.
$$
Using in addition the bound $|\partial_\xi^{\alpha} \partial_\eta^\beta m|\lesssim 2^{-j(|\alpha|+|\beta|)}$ gives
$$
\| |\cdot|^{2d} \widehat{F_x} \|_{L^2(\R^{2d})} = \left\| \partial^{2d}_{\alpha,\beta} F_{x} \right\|_{L^2(\R^{2d})} \lesssim 2^{j\frac{1-d}{2}}
$$
The two last estimates give
$$
\|\widehat{F_x}\|_{L^1(\R^{2d})} \lesssim \| \widehat{F_x} \|_{L^2(\R^{2d})}^{1/2} \| |\cdot|^{2d} \widehat{F_x} \|_{L^2(\R^{2d})}^{1/2} \lesssim 2^{j\frac{d+1}{2}},
$$
from which the desired bound follows.
\end{proof}

\subsection{Study of $u_2$ corresponding to time-resonances}

We now focus on the piece coming from time-resonances:
$$ u_2(t) = \int_0^t  \int_{\R^{2d}}  e^{ix\xi} e^{ita(\xi)} e^{is\phi(\xi,\eta)} m_2(\xi,\eta) \widehat{f}(\eta) \widehat{g}(\xi-\eta) \,d\xi \,d\eta\,ds,$$
which can be written by setting $s=\sigma t$ and $X=\frac{x}{t}$ as
\begin{align*}
 u_2(t) & = t \int_0^1  \int_{\R^{2d}}  e^{it(X\xi +a(\xi)+ \sigma \phi(\xi,\eta))} m_2(\xi,\eta) \widehat{f}(\eta) \widehat{g}(\xi-\eta) \,d\xi \,d\eta\,d\sigma  \\
 & = t \int K(y,z) f(y) g(z) \, dy\, dz,
\end{align*}
where the bilinear kernel $K$ is given by
\begin{equation}
\label{kiwi}
K(y,z)\overset{def}{=} \frac{1}{(2\pi)^d} \int_0^1  \int_{\R^{2d}}  e^{it(X\xi +a(\xi)+ \sigma \phi(\xi,\eta))} m_2(\xi,\eta) e^{-iy\eta} e^{-iz(\xi-\eta)} \,d\xi \,d\eta\,d\sigma.\end{equation}

\begin{theorem} \label{thm:K2} For $d=2$, there exists a decomposition of the kernel $K=\widetilde{K}+L$ such that
\begin{itemize}
\item the kernel $\widetilde{K}(y,z)$ is (uniformly with respect to $(y,z)$) bounded by
$$ |\widetilde{K}(y,z)| \lesssim \frac{\log(t)}{t^2}[1+|y|+|z|]^\epsilon $$
for every $\epsilon>0$;
\item the bilinear operator ${\mathcal E}$ associated to the kernel $L$ satisfies
$$ \|{\mathcal E}\|_{L^{2,s} \times L^{2,s} \to L^\infty} \lesssim t^{-2},$$
for every $s>\frac{3}{2}$.
\end{itemize}
Consequently,
$$ \|u_2(t)\|_{L^\infty} \lesssim \frac{\log(t)}{t}  \|f\|_{L^{2,s}} \|g\|_{L^{2,s}} $$  %+ \|f\|_{L^{2,1+\epsilon}}\|g\|_{L^{2,2+\epsilon}} \right] $$
for every $s>\frac{3}{2}$.
\end{theorem}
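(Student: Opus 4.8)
The plan is to first record how Theorem~\ref{thm:K2} follows from the claimed decomposition $K=\widetilde K+L$, and then to build $\widetilde K$ and $L$ by running, at the level of the kernel \eqref{kiwi}, the same three-fold stationary phase as in the proof of Theorem~\ref{theoremeasymptotique}~$(iii)$.

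Granting the decomposition, the ``Consequently'' is immediate. Since $u_2(t)=t\int K(y,z)f(y)g(z)\,dy\,dz$, the $L$-part contributes $t\,\mathcal E(f,g)$, bounded in $L^\infty$ by $t\cdot t^{-2}\|f\|_{L^{2,s}}\|g\|_{L^{2,s}}=t^{-1}\|f\|_{L^{2,s}}\|g\|_{L^{2,s}}$. For the $\widetilde K$-part, the pointwise bound and Fubini give, for every $\epsilon>0$,
$$
\Big|t\int \widetilde K(y,z)f(y)g(z)\,dy\,dz\Big|\lesssim \frac{\log t}{t}\Big(\int\langle y\rangle^\epsilon|f(y)|\,dy\Big)\Big(\int\langle z\rangle^\epsilon|g(z)|\,dz\Big),
$$
and by Cauchy--Schwarz $\int\langle y\rangle^\epsilon|f(y)|\,dy\le\|\langle\cdot\rangle^{\epsilon-s}\|_{L^2(\R^2)}\|f\|_{L^{2,s}}$, which is finite whenever $s>1+\epsilon$; taking $\epsilon$ small this holds for every $s>\tfrac32$. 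Adding the two contributions yields $\|u_2(t)\|_{L^\infty}\lesssim t^{-1}\log t\,\|f\|_{L^{2,s}}\|g\|_{L^{2,s}}$.

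To build the decomposition I would put $K(y,z)$ in the form of a $\sigma$-integral of a two-frequency oscillatory integral with phase $(x-z)\xi+t\,a(\xi)+t\sigma\,\phi(\xi,\eta)-(y-z)\eta$ and amplitude $m_2$ — this is exactly the integral representing $u$ in Theorem~\ref{theoremeasymptotique}~$(iii)$ with $\widehat f(\eta)\widehat g(\xi-\eta)$ formally replaced by $e^{-iy\eta}e^{-iz(\xi-\eta)}$ — and split $\int_0^1=\int_0^{1/t}+\int_{1/t}^\epsilon+\int_\epsilon^1$. The $\int_\epsilon^1$ piece is sent to $L$: using (A3) it reduces, as for the large-$\sigma$ region, to a copy of the middle piece plus a term estimated by a $TT^*$/volume argument of the type of Lemma~\ref{lem:1}. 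The $\int_0^{1/t}$ piece, of length $t^{-1}$, is also sent to $L$: one stationary phase in $\xi$ (legitimate for $\sigma$ small by \eqref{pingouin}) gains a factor $t^{-1}$ on its main term, while the stationary-phase remainders and the region near $\mathcal R$ — where $|\partial^\alpha m_2|\lesssim\operatorname{dist}(\cdot,\mathcal R)^{-|\alpha|}$ — are controlled by the dyadic decomposition in $\operatorname{dist}(\cdot,\mathcal R)$ and the $TT^*$ estimates of Proposition~\ref{prop:u1}; the resulting operator has norm $\lesssim t^{-2}$. For the middle piece I would follow the three steps of the proof of Theorem~\ref{theoremeasymptotique}~$(iii)$: stationary phase in $\xi$ about $\Xi$ (factor $t^{-1}$, remainder $O(t^{-2})$ into $L$); then stationary phase in $\eta$ about $H$, which however is valid only for $\sigma\gtrsim |y-z|/t$ since $H$ solves $\nabla_\eta\phi(\Xi,\eta)=(y-z)/(t\sigma)$ — in the complementary range $\sigma\lesssim|y-z|/t$ the $\eta$-phase is non-stationary with gradient $\gtrsim|y-z|$, so integration by parts gives rapid decay in $|y-z|$ and a bound $O(t^{-2})$; in the main range, $\eta$-stationary phase gives a factor $(t\sigma)^{-1}$ (remainder $O((t\sigma)^{-2})$, hence $O(t^{-2})$ after the $\sigma$-integration, into $L$). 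The surviving $\sigma$-integral is then $\int_{\sim\langle y-z\rangle/t}^{\epsilon}\tfrac{1}{\sigma}\,\beta\,e^{it\zeta_X(\sigma)}\,d\sigma$, which — crudely, or via a variant of Lemma~\ref{chardonneret} — is $\lesssim\log(t/\langle y-z\rangle)\lesssim\log t+\log\langle y-z\rangle$; multiplied by the accumulated $t^{-1}\cdot t^{-1}$ and using $\log\langle y-z\rangle\lesssim_\epsilon(1+|y|+|z|)^\epsilon$, this is precisely $\widetilde K$, with $|\widetilde K(y,z)|\lesssim\frac{\log t}{t^2}(1+|y|+|z|)^\epsilon$.

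Thus the polynomial factor $(1+|y|+|z|)^\epsilon$ enters only through the lower endpoint $\sim\langle y-z\rangle/t$ of the $\sigma$-integral, which reflects that the $\eta$-stationary phase only sees a critical point once $\sigma\gtrsim|y-z|/t$ (and the $x$-dependence never causes a loss, since $|x-z|\gtrsim t$ forces a non-stationary $\xi$-phase and rapid decay in $|x-z|$). I expect the main obstacle to be making all of the error terms rigorous uniformly in the data parameters $(y,z)$ and, above all, across the singular set $\mathcal R$: there, as in Proposition~\ref{prop:u1} and Lemma~\ref{lem:1}, one decomposes dyadically in $\operatorname{dist}(\cdot,\mathcal R)$, uses that the bad neighbourhood of scale $2^k$ has volume $\lesssim 2^{k(d+1)}$ and that the $\xi$-stationary phase is effective once $t\,2^{2k}\gtrsim 1$, balances the two scales and sums, and estimates the pieces that require the oscillation of $e^{-iy\eta}$ and $e^{-iz(\xi-\eta)}$ by $TT^*$, putting them into $L$ with norm $\lesssim t^{-2}$. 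Keeping simultaneously the optimal $t$-power ($t^{-2}$, up to the unavoidable $\log t$) and a $(y,z)$-loss no worse than $(1+|y|+|z|)^\epsilon$ throughout this bookkeeping is the delicate part; assumption (A3) is what confines the $\sigma\in(\epsilon,1)$ contribution to the cases already treated.
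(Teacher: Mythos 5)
Your reduction of the theorem to the kernel decomposition, and your identification of where the $\log t$ and the $(1+|y|+|z|)^\epsilon$ loss should come from (the $\sigma$-integral $\int \frac{d\sigma}{\sigma}$ truncated below at $\sim \langle y-z\rangle /t$, after one stationary phase in $\xi$ and one in $\eta$ relative to the full phase $(x-z)\xi+ta(\xi)+t\sigma\phi-(y-z)\eta$), are sound, and your dichotomy at $\sigma\sim|y-z|/t$ is a legitimate alternative to what the paper does. But there is a genuine gap: every stationary-phase and integration-by-parts step in your middle piece is quantified as if the amplitude were uniformly smooth, whereas here the symbol is $m_2$, whose derivatives blow up like $\operatorname{dist}((\xi,\eta),\mathcal{R})^{-|\alpha|}$. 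The paper's proof of Theorem~\ref{thm:K2} consists almost entirely of handling exactly this: it never performs a stationary phase in $\eta$ at all, but decomposes dyadically in $|\partial_\eta\phi|\sim 2^j$, cuts off $\xi$ at scale $r(j)=2^j$ around the stationary point, cuts off $\sigma$ at $\rho(t,j)=(t2^{2j})^{-1}$, and proves the per-$j$ Lemmas~\ref{lem:u_3}, \ref{lem:K_11}, \ref{lem:K_12}, \ref{lem:K_2} whose powers of $2^j$ are what make the sums close to $t^{-2}\log t$. In your scheme the analogous bookkeeping (validity of the $\eta$-stationary phase only when $t\sigma 2^{2j}\gtrsim1$ with the moving critical point confined to $|\nabla_\eta\phi|\sim|y-z|/(t\sigma)$, the non-stationary IBP gaining only $(|y-z|2^j)^{-1}$ and hence useless when $|y-z|\lesssim 2^{-j}$, the small-$j$ and small-$\sigma$ regions handled by volume bounds) is precisely the content of the proof, and you defer it wholesale to ``balance the two scales and sums''; your middle-piece error estimates ($O(t^{-2})$ remainders, rapid decay in $|y-z|$) are not available as stated without it.

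The second concrete problem is the operator part $L$. The pieces you propose to ``estimate by $TT^*$, putting them into $L$ with norm $\lesssim t^{-2}$'' are exactly where the paper needs its Lemma~\ref{lem:E}, which is not a Lemma~\ref{lem:1}/Proposition~\ref{prop:u1}-type argument: in the paper a single integration by parts in $\eta$ (against $e^{it\sigma\phi}$ only) produces a factor $(z_p-y_p)$, which is transferred onto the data as $\mathcal{E}_j(f,g)=i\bigl(T_{R^j_{1,2}}(x_pf,g)+T_{R^j_{1,2}}(f,x_pg)\bigr)$ and then estimated by interpolating a pointwise kernel bound against Fourier-side $L^{2,\gamma}\times L^{2,\gamma}\to L^1$ bounds for symbols supported near $\mathcal{S}$ combined with the dispersive estimate; this interpolation is what produces $t^{-2}$ with only $s>\frac32$ (a purely pointwise treatment of that term costs a full power of $|y|+|z|$ and would force $s>2$, while a bare $TT^*$-plus-dispersive bound of the Proposition~\ref{prop:u1} type tops out at $t^{-d/2}=t^{-1}$). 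Your full-phase $\eta$-analysis might avoid generating the $(y-z)$ factor altogether and thus bypass Lemma~\ref{lem:E}, but then the burden shifts entirely to the uniform-in-$j$ stationary-phase bookkeeping above, which you have not carried out; as written, the claim that the residual pieces have operator norm $t^{-2}$ is asserted, not proved. (Minor point: the $\sigma\in[\epsilon_0,1]$ piece under (A3) is handled in the paper by a joint $(\xi,\eta)$ stationary phase giving $t^{-d}=t^{-2}$ on the part where $\operatorname{Hess}_{(\xi,\eta)}$ is non-degenerate, not by a volume/$TT^*$ argument.)
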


\begin{proof} \underline{Step 1: decomposition of $K$.} We first distinguish between times close to zero, and times away from zero. For a constant $\epsilon_0>0$ which will be taken sufficiently small, we can write, starting from the definition~(\ref{kiwi})
\begin{equation*}
\begin{split}
K(y,z) & = \frac{1}{(2\pi)^d} \int_0^{\epsilon_0} \int_{\R^{2d}}  \dots \,d\xi \,d\eta\,d\sigma + \frac{1}{(2\pi)^d} \int_{\epsilon_0}^1 \int_{\R^{2d}}\dots \,d\xi \,d\eta\,d\sigma \\
& \overset{def}{=} K_{[0,\epsilon_0]}(y,z) + K_{[\epsilon_0,1]}(y,z).
\end{split}
\end{equation*}
Next, recall that the symbol $m_2$ is supported in $\Omega_2$, which is essentially a cone whose top is a sub-manifold of dimension $d-1$. As in the previous section, we split $\Omega_2$ into almost disjoint sub-sets $\Omega^j_2$ defined by $\Omega^j_2 \overset{def}{=} \Omega_2\cap \{ 2^{j-1} \leq |\partial_\eta \phi| <2^{j+2} \}$, and define an associated smooth partition of unity $\chi_{\Omega^j_2}$ such that
$$
\sum_{j \leq 0} \chi_{\Omega^j_2}=1 \;\;\mbox{on} \;\; \Omega_2, \quad \operatorname{Supp} \chi_{\Omega_2^j} \subset \Omega_2^{j}, \quad \mbox{and} \quad |\partial_\xi^\alpha \partial_\eta^\beta \chi_{\Omega_2^j}(\xi,\eta)| \lesssim 2^{-j(|\alpha|+|\beta|)}.
$$
That procedure gives rise to a decomposition of the kernel
$$ K_{[0,\epsilon_0]} = \sum_{j\leq 0} K^j,$$
with
\begin{align*}
 K^j(y,z) & \overset{def}{=} \frac{1}{(2\pi)^d} \int_0^{\epsilon_0}  \int_{\R^{2d}}  e^{it(X\xi +a(\xi)+ \sigma \phi(\xi,\eta))} (m_2 \chi_{\Omega^j_2})(\xi,\eta) e^{-iy\eta} e^{-iz(\xi-\eta)} \,d\xi \,d\eta\,d\sigma \\
 &  = \frac{1}{(2\pi)^d} \int_0^{\epsilon_0}  \int_{\R^{2d}}  e^{it\psi(\xi,\eta,\sigma)} (m_2\chi_{\Omega^j_2})(\xi,\eta) e^{-i(y-z)\eta-iz\xi} \,d\xi \,d\eta\,d\sigma,
\end{align*}
where we recall that $\psi(\xi,\eta,\sigma)= X\xi +a(\xi)+ \sigma \phi(\xi,\eta)$. This phase function satisfies $\nabla_\xi \psi(\xi,\eta,\sigma,X) = X+\nabla_\xi a(\xi)+\sigma\nabla_\xi \phi(\xi,\eta)$ and $\textrm{Hess}_\xi [\psi](\xi,\eta,\sigma,X) = \textrm{Hess}[a](\xi)+\sigma \textrm{Hess}_\xi [\phi(\xi,\eta)]$. Since $a$ is assumed to have a non-degenerate Hessian matrix everywhere, we can take $\epsilon_0$ sufficiently small so that, for $\sigma \in [0,\epsilon_0]$, the Hessian $\textrm{Hess}_\xi [\psi](\xi,\eta,\sigma)$ is non-degenerate. As a consequence, for $\eta,\sigma$ fixed the equation $\nabla_\xi \psi(\xi,\eta,\sigma)=0$ has at most one solution $\xi\overset{def}{=}\Xi(\eta,\sigma,X)$. Introduce a small parameter $r = r(j) \geq 2^j$ (which will be fixed later) and split the kernel $K^j$ as
$$ K^j = K^j_1+K^j_2$$
with
\begin{align*}
& K^j_1(y,z)\overset{def}{=} \\
& \quad \frac{1}{(2\pi)^d} \int_0^{\epsilon_0}  \int_{\R^{2d}}  e^{it\psi(\xi,\eta,\sigma)} (m_2\chi_{\Omega^j_2})(\xi,\eta) e^{-i(y-z)\eta-iz\xi}  \chi\left(\frac{\nabla_\xi \psi - t^{-1}z}{r}\right)\,d\xi \,d\eta\,d\sigma
\end{align*}
and
\begin{equation}\label{eq:K2} 
\begin{split}
& K^j_2(y,z)\overset{def}{=} \\
&\quad \frac{1}{(2\pi)^d}  \int_0^{\epsilon_0}  \int_{\R^{2d}}  e^{it\psi(\xi,\eta,\sigma)} (m_2\chi_{\Omega^j_2})(\xi,\eta) e^{-i(y-z)\eta-iz\xi}  \left[1-\chi\left(\frac{\nabla_\xi \psi -t^{-1}z}{r}\right)\right]\,d\xi \,d\eta\,d\sigma.
\end{split}
\end{equation}
Here $\chi$ is a smooth function, supported on $[-2,2]^d$ and equal to $1$ on $[-1,1]^d$. Moreover, the quantity $\nabla_\xi \psi$ is equivalent to $(\operatorname{Hess}_\xi [\psi](\Xi)) (\xi-\Xi)$ so the smooth cutoff in $K^j_1$ may be thought of as a localization of the frequency $\xi$ to a ball of radius $\sim r$. For another small parameter $\rho = \rho(t,j) <\epsilon_0$, decompose the kernel $K^j_1$ into $K^j_{1,1}+K^j_{1,2}$ with
\begin{align*}
 \lefteqn{K^j_{1,1}(y,z)\overset{def}{=}} & & \\ 
 &  &\frac{1}{(2\pi)^d} \int_0^{\rho}  \int_{\R^{2d}}  e^{it\psi(\xi,\eta,\sigma)} (m_2\chi_{\Omega^j_2})(\xi,\eta) e^{-i(y-z)\eta-iz\xi}  \chi\left(\frac{\nabla_\xi \psi -t^{-1}z}{r}\right) \,d\xi \,d\eta\,d\sigma
\end{align*}
and
\begin{align*}
 \lefteqn{K^j_{1,2}(y,z)\overset{def}{=}} & & \\
 & &\frac{1}{(2\pi)^d}  \int_\rho^{\epsilon_0}  \int_{\R^{2d}}  e^{it\psi(\xi,\eta,\sigma)} (m_2\chi_{\Omega^j_2})(\xi,\eta) e^{-i(y-z)\eta-iz\xi}  \chi\left(\frac{\nabla_\xi \psi -t^{-1}z}{r}\right) \,d\xi \,d\eta\,d\sigma.
\end{align*}
To estimate the kernel $K^j_{1,2}$, we use an integration by parts in $\eta$, using $i\sigma t \nabla_\eta \phi e^{it\psi} = \nabla_\eta e^{it\sigma \psi}$ (due to the definition of $\psi$). This gives:
\begin{align*}
K^j_{1,2}\overset{def}{=} \overline{K}^j_{1,2} + i(z_p-y_p) R^j_{1,2}
\end{align*}
with
\begin{align*} 
 \lefteqn{\overline{K}^j_{1,2}(y,z) \overset{def}{=}} & & \\
 & & \frac{1}{(2\pi)^d}  \int_\rho^{\epsilon_0}  \int_{\R^{2d}}  e^{it\psi(\xi,\eta,\sigma)} \partial_{\eta_p}\left[\frac{m_2 \chi_{\Omega^j_2}}{it\sigma \partial_{\eta_p} \phi} \, \chi\left(\frac{\nabla_\xi \psi -t^{-1}z}{r}\right) \right]  e^{-i(y-z)\eta-iz\xi} \,d\xi \,d\eta\,d\sigma, 
\end{align*}
and
\begin{align*} 
 \lefteqn{R^j_{1,2}(y,z) \overset{def}{=}} & & \\
 & &\frac{1}{(2\pi)^d}  \int_\rho^{\epsilon_0}  \int_{\R^{2d}}  e^{it\psi(\xi,\eta,\sigma)} \left[\frac{m_2 \chi_{\Omega^j_2}}{it\sigma \partial_{\eta_p} \phi} \, \chi\left(\frac{\nabla_\xi \psi -t^{-1}z}{r}\right)  \right]  e^{-i(y-z)\eta-iz\xi} \,d\xi \,d\eta\,d\sigma, 
\end{align*}
where we choose the coordinate $\eta_p$ such that $|\nabla_\eta \phi| \simeq |\partial_{\eta_p} \phi| \simeq 2^j$ (which is possible up to some other truncations).

\bigskip 
We finally set $r(j) = 2^j$, $\rho(t,j) = (t2^{2j})^{-1}$, 
$$ 
\widetilde{K} \overset{def}{=} \sum_{2^j\leq (\epsilon_0 t)^{-1/2}} K^j + \sum_{2^j\geq (\epsilon_0 t)^{-1/2}} \left[ K^j_{1,1} + \overline{K}^j_{1,2} + K^j_2 \right] + K_{[\epsilon_0,1]},
$$
and
$$
{\mathcal E} \overset{def}{=} \sum_{2^j\geq (\epsilon_0 t)^{-1/2}} {\mathcal E}_j \quad \mbox{with} \quad
{\mathcal E}_j(f,g)\overset{def}{=} i \left(T_{R^j_{1,2}}(x_pf,g) + T_{R^j_{1,2}}(f,x_p g)\right).
$$
Here $p\in\{1,...,d\}$ is a coordinate chosen as indicated above, $x_pf$ stands for the function $x\to x_p f(x)$ and $T_{R^j_{1,2}}$ is the bilinear operator associated to the bilinear kernel.

\bigskip

\noindent \underline{Step 2: estimating $\widetilde{K}$ and $\mathcal{E}$.}
We postpone the estimates of the various pieces in the decomposition of $K_{[0,\epsilon_0]}$ to the next subsection (Lemmas \ref{lem:u_3}, \ref{lem:K_11}, \ref{lem:K_12}, \ref{lem:E} and \ref{lem:K_2}), and show here how to conclude from them. Recall that $d=2$, $r=2^j$, $\rho=(t2^{2j})^{-1}$; we choose further $N=2$.

First, the desired estimate on $\mathcal{E}$ follows from Lemma~\eqref{lem:E}, which gives
$$
\|\mathcal{E}\|_{L^{2,s} \times L^{2,s} \rightarrow L^\infty} \leq \sum_{(\epsilon_0 t)^{-1} \leq 2^{2j} \leq 1} \|\mathcal{E}_j\|_{L^{2,s} \times L^{2,s} \rightarrow L^\infty} \lesssim \sum_{2^j \leq 1} t^{-1-\frac{d}{2}} 2^{j\nu} \lesssim t^{-1-\frac{d}{2}}.
$$ 
Next, Lemma \ref{lem:u_3} gives 
\begin{equation}
\label{martinpecheur1}
\begin{split}
\sum_{2^{2j} \leq (\epsilon_0 t)^{-1}} |K^j_1(y,z)|+|K^j_2(y,z)| & \lesssim \sum_{2^j \leq t^{-1}} t^{-1} 2^{jd} +  \sum_{t^{-1} \leq 2^j \leq (\epsilon_0 t)^{-\frac{1}{2}}} t^{-1} 2^{dj} \langle \log(t2^{j}) \rangle \\
 & \lesssim t^{-(d+1)} + t^{-1} t^{-\frac{d}{2}} \log(t) \lesssim t^{-2} \log(t).
\end{split}
\end{equation}
If $2^{2j}\geq (\epsilon_0 t)^{-1}$, then $\rho=(t2^{2j})^{-1}\leq \epsilon_0$ and  Lemmas \ref{lem:K_11} and \ref{lem:K_12} yield for every $\epsilon>0$
\begin{align*}
|K^j_{1,1}(y,z)| + |\overline{K}^j_{1,2}(y,z)| & \lesssim \frac{\rho r 2^j}{t} + \frac{1}{t^{2+\epsilon}} \frac{1}{(\rho 2^j)^\epsilon} [2^{-j} + |y|+|z|]^{\epsilon} \\
 & \lesssim \frac{1}{t^2} [1 + 2^j(|y|+|z|)]^{\epsilon}.
\end{align*}
Summing over $j$ such $(\epsilon_0 t)^{-1} \leq 2^{2j} \lesssim 1$ gives an extra $\log(t)$ quantity and hence
\begin{equation}
\label{martinpecheur2}
\sum_{(\epsilon_0 t)^{-1} \leq 2^{2j} \leq 1}|K^j_{1,1}(y,z)| + |\overline{K}^j_{1,2}(y,z)| \lesssim \frac{1}{t^2}\left[ \log(t) + (|y|+|z|)^{\epsilon}\right].
\end{equation}
For $K^j_2$, using Lemma \ref{lem:K_2} (with $N=2$) gives 
\begin{align}
\label{martinpecheur3}
\sum_{(\epsilon_0 t)^{-1} \leq 2^{2j} \leq 1} |K^j_2(y,z)| \lesssim  \sum_{(\epsilon_0 t)^{-1} \leq 2^{2j} \leq 1} \frac{1}{t^2} \lesssim \log(t) t^{-2}.
\end{align}
Combining~\eqref{martinpecheur1}, \eqref{martinpecheur2} and \eqref{martinpecheur3} gives the desired estimate on $\widetilde{K}$.

\bigskip

\noindent \underline{Step 3: estimating $K_{[\epsilon_0,1]}$.}
It remains to estimate $K_{[\epsilon_0,1]}$. By the assumption (A3), the interval $[\epsilon_0,1]$ can be split into $[\epsilon_0,1] = I \cup J$, where I and J are finite unions of intervals such that: on $I$, $\operatorname{Hess}_{(\xi,\eta)} [a(\xi)+\sigma \phi(\xi,\eta)]$ is not degenerate, whereas on $J$, $\operatorname{Hess}_\xi [ a(\xi)+\sigma \phi(\xi,\eta)]$ is not degenerate. Split accordingly
\begin{equation*}
\begin{split}
K_{[\epsilon_0,1]}(y,z) & = \frac{1}{(2\pi)^d} \int_I \int_{\R^{2d}}  \dots \,d\xi \,d\eta\,d\sigma + \frac{1}{(2\pi)^d} \int_J \int_{\R^{2d}}\dots \,d\xi \,d\eta\,d\sigma \\
& \overset{def}{=} K_I(y,z) + K_J(y,z).
\end{split}
\end{equation*}
Consider first $K_I(y,z)$. It is given by
$$
K_I(y,z)\overset{def}{=} \frac{1}{(2\pi)^d} \int_I \int_{\R^{2d}}  e^{it(X\xi +a(\xi)+ \sigma \phi(\xi,\eta))} m_2(\xi,\eta) e^{-iy\eta} e^{-iz(\xi-\eta)} \,d\xi \,d\eta\,d\sigma.
$$
Since $\operatorname{Hess}_{(\xi,\eta)} [a(\xi)+\sigma \phi(\xi,\eta)]$ is not degenerate, the stationary phase lemma in the $(\xi,\eta)$ variables gives
$$
|K_I(y,z)| \lesssim \int_I \frac{1}{t^{2}} \,d\sigma \lesssim \frac{1}{t^2}.
$$
Finally, there remains $K_J$. It can be dealt with exactly as $K_{[0,\epsilon_0]}$, but in a simpler way, since the singularity at $\sigma=0$ is absent. We therefore skip this term.
\end{proof}

\medskip

Let us now give a $3$-dimensional version of Theorem \ref{thm:K2}:
\begin{theorem} \label{thm:K3} For $d=3$, there exists a decomposition of the kernel $K=\widetilde{K}+L$ such that
\begin{itemize}
\item the kernel $\widetilde{K}(y,z)$ is (uniformly with respect to $(y,z)$) bounded by
$$ |\widetilde{K}(y,z)| \lesssim t^{-\frac{5}{2}} [1+|y|+|z|]^\epsilon $$
for every $\epsilon>0$;
\item the bilinear operator ${\mathcal E}$ associated to the kernel $L$ satisfies
$$ \|{\mathcal E}\|_{L^{2,s} \times L^{2,s} \to L^\infty} \lesssim t^{-\frac{5}{2}},$$
for every $s>\frac{3}{2}$.
\end{itemize}
Consequently,
$$ \|u_2(t)\|_{L^\infty} \lesssim t^{-\frac{3}{2}}  \|f\|_{L^{2,s}} \|g\|_{L^{2,s}} $$ 
for every $s>\frac{3}{2}$.
\end{theorem}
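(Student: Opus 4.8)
The plan is to reproduce the proof of Theorem~\ref{thm:K2} with $d=3$ in place of $d=2$; the only change of substance is that every stationary-phase integration now gains an extra factor $t^{-1/2}$, and this is exactly what turns the (logarithmically divergent) dyadic sums of the two-dimensional argument into strictly geometric ones. Concretely, Step~1 is kept unchanged: starting from the kernel $K$ of~\eqref{kiwi}, split $K=K_{[0,\epsilon_0]}+K_{[\epsilon_0,1]}$; decompose $\Omega_2=\bigcup_{j\le 0}\Omega_2^j$ according to $|\partial_\eta\phi|\sim 2^j$, with the associated partition of unity $\chi_{\Omega_2^j}$; for each $j$ write $K^j=K_1^j+K_2^j$, where the cutoff $\chi\big((\nabla_\xi\psi-t^{-1}z)/r\big)$ with $r=r(j)=2^j$ localizes $\xi$ to a ball of radius $\sim 2^j$ about the unique $\xi$-critical point $\Xi(\eta,\sigma,X)$ (non-degeneracy of $\operatorname{Hess}_\xi[\psi]=\operatorname{Hess}[a]+\sigma\operatorname{Hess}_\xi[\phi]$ on $[0,\epsilon_0]$, from~\eqref{pingouin}, does not see the dimension); then split $K_1^j=K_{1,1}^j+K_{1,2}^j$ at $\sigma=\rho(t,j)=(t2^{2j})^{-1}$; and integrate by parts $N$ times (say $N=3$) in $\eta$ on $K_{1,2}^j$ via $i\sigma t\,\partial_{\eta_p}\phi\,e^{it\psi}=\partial_{\eta_p}e^{it\psi}$ with $|\partial_{\eta_p}\phi|\sim 2^j$, obtaining $\overline K_{1,2}^j+i(z_p-y_p)R_{1,2}^j$ and hence $\mathcal{E}=\sum_j\mathcal{E}_j$. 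One then defines $\widetilde K$ and $\mathcal{E}$ exactly as in the two-dimensional case.

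Step~2 is carried out by invoking the $d=3$ cases of the auxiliary estimates (Lemmas~\ref{lem:u_3}, \ref{lem:K_11}, \ref{lem:K_12}, \ref{lem:E}, \ref{lem:K_2}); their proofs go through once the dimensional data for $d=3$ are inserted (notably the volume bound $|\Omega_2^j|\lesssim 2^{j(d+1)}$ and the fact that the projections of $\Omega_2^j$ onto the $\eta$- and the $(\xi-\eta)$-variables have size $\sim 2^j$, both because $\Omega_2^j$ is a $\sim 2^j$-neighbourhood of a $(d-1)$-manifold). The small-$j$ contribution $\sum_{2^{2j}\le(\epsilon_0 t)^{-1}}\big(|K_1^j(y,z)|+|K_2^j(y,z)|\big)$ is then bounded, as in~\eqref{martinpecheur1}, by $t^{-(d+1)}+t^{-1-d/2}=t^{-4}+t^{-5/2}\lesssim t^{-5/2}$; in contrast with dimension two, the per-$j$ bound carries no logarithmic factor, so that the series $\sum_{2^j\le(\epsilon_0 t)^{-1/2}}2^{dj}$ is genuinely convergent and no $\log t$ is produced (the borderline $\int\sigma^{-1}\,d\sigma$ responsible for the $\log t$ of~\eqref{martinpecheur1} being, in dimension three, traded for a convergent integral). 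For $2^{2j}\ge(\epsilon_0 t)^{-1}$ one has $\rho=(t2^{2j})^{-1}\le\epsilon_0$, and Lemmas~\ref{lem:K_11}--\ref{lem:K_12} give $|K_{1,1}^j(y,z)|+|\overline K_{1,2}^j(y,z)|\lesssim t^{-5/2}[1+2^j(|y|+|z|)]^\epsilon$ for every $\epsilon>0$, which summed over the finitely many admissible $j$ (again with no extra logarithm, unlike~\eqref{martinpecheur2}) yields $t^{-5/2}[1+|y|+|z|]^\epsilon$; Lemma~\ref{lem:K_2} gives $\sum_j|K_2^j(y,z)|\lesssim t^{-5/2}$; and Lemma~\ref{lem:E} gives $\|\mathcal{E}\|_{L^{2,s}\times L^{2,s}\to L^\infty}\le\sum_j\|\mathcal{E}_j\|_{L^{2,s}\times L^{2,s}\to L^\infty}\lesssim\sum_{2^j\le 1}t^{-1-d/2}2^{j\nu}\lesssim t^{-5/2}$. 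Step~3 is literally unchanged: split $[\epsilon_0,1]=I\cup J$ by (A3), bound $K_I$ by stationary phase in the full variable $(\xi,\eta)\in\mathbb{R}^6$, getting $|K_I(y,z)|\lesssim\int_I t^{-3}\,d\sigma\lesssim t^{-3}\lesssim t^{-5/2}$, and treat $K_J$ like $K_{[0,\epsilon_0]}$ but without the singularity at $\sigma=0$. Collecting everything gives $|\widetilde K(y,z)|\lesssim t^{-5/2}[1+|y|+|z|]^\epsilon$ and $\|\mathcal{E}\|_{L^{2,s}\times L^{2,s}\to L^\infty}\lesssim t^{-5/2}$.

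The $L^\infty$ bound on $u_2$ then follows as in Theorem~\ref{thm:K2}: from $u_2(t)=t\iint K(y,z)f(y)g(z)\,dy\,dz$ and $K=\widetilde K+L$ one has $\|u_2(t)\|_{L^\infty}\le t\,\sup_x\iint|\widetilde K(y,z)|\,|f(y)|\,|g(z)|\,dy\,dz+\|\mathcal{E}(f,g)\|_{L^\infty}$, and since $[1+|y|+|z|]^\epsilon\lesssim\langle y\rangle^\epsilon\langle z\rangle^\epsilon$ and $\|\langle y\rangle^\epsilon h\|_{L^1}\lesssim\|h\|_{L^{2,s}}$ whenever $s>\frac{3}{2}+\epsilon$, choosing $\epsilon<s-\frac{3}{2}$ gives $\|u_2(t)\|_{L^\infty}\lesssim\big(t\cdot t^{-5/2}+t^{-5/2}\big)\|f\|_{L^{2,s}}\|g\|_{L^{2,s}}\lesssim t^{-3/2}\|f\|_{L^{2,s}}\|g\|_{L^{2,s}}$ for every $s>\frac{3}{2}$, as claimed. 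The one genuinely delicate point --- the main obstacle --- is the dimensional bookkeeping of Step~2: one must verify that every dyadic sum over $j$ which was only logarithmically convergent in the $d=2$ proof (the small-$j$ sum and the sum over $(\epsilon_0 t)^{-1}\le 2^{2j}\le 1$) becomes geometrically convergent once the extra $t^{-1/2}$ from three-dimensional stationary phase in $\eta$ is inserted, and that the choices $r(j)=2^j$, $\rho(t,j)=(t2^{2j})^{-1}$ and $N=3$ stay compatible with the hypotheses of the $d=3$ forms of Lemmas~\ref{lem:u_3}--\ref{lem:K_2}; this is routine but has to be followed exponent by exponent.
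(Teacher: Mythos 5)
There is a genuine gap, and it sits exactly where you wave it away: the small-$j$ regime $2^{2j}\le(\epsilon_0 t)^{-1}$. You bound $\sum_j\big(|K^j_1|+|K^j_2|\big)$ there ``as in~\eqref{martinpecheur1}'' via Lemma~\ref{lem:u_3} and assert that in $d=3$ ``the per-$j$ bound carries no logarithmic factor.'' That is false: the factor $\langle\log(t2^j)\rangle$ in Lemma~\ref{lem:u_3} does not come from stationary phase in $\eta$ (no stationary phase is performed in that lemma at all), but from the crude bound $\int_{\Omega_2^j}\min\big(1,(t|\phi|)^{-1}\big)\,d\xi\,d\eta$, where the dyadic levels $t^{-1}\le|\phi|\lesssim 2^j$ each contribute $t^{-1}2^{dj}$; this mechanism is dimension-independent, so in $d=3$ the per-$j$ bound is still $t^{-1}2^{3j}\langle\log(t2^j)\rangle$. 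Summing over $t^{-1}\le 2^j\le(\epsilon_0 t)^{-1/2}$ the sum is dominated by its top term and gives $\sim t^{-5/2}\log t$, not $t^{-5/2}$. After multiplying by $t$ this yields only $\|u_2(t)\|_{L^\infty}\lesssim t^{-3/2}\log t\,\|f\|_{L^{2,s}}\|g\|_{L^{2,s}}$, which misses the stated (and optimal, linear-rate) bound. So the claim that one can keep the $d=2$ parameters $r(j)=2^j$, $\rho=(t2^{2j})^{-1}$ verbatim and let the extra $t^{-1/2}$ absorb all logarithms does not survive the ``exponent by exponent'' check you yourself flag as the delicate point.

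This is precisely why the paper's proof of Theorem~\ref{thm:K3} is not a literal transcription of the $d=2$ argument: it takes $r(j)=2^{\alpha j}$ with $\alpha\in(\tfrac12,1)$ close to $1$, $\rho(t,j)=\min\big(2^{j\mu}(t2^{j(1+2\alpha)})^{-1},\epsilon_0\big)$ with a small $\mu>0$, and $N=\tfrac52$, and it never invokes Lemma~\ref{lem:u_3}: the small-$j$ range (there, $\widetilde\rho\ge\epsilon_0$) is handled by Lemma~\ref{lem:K_11} with $\rho=\epsilon_0$ (using $K^j_1=K^j_{1,1}$ in that case), giving the log-free bound $\epsilon_0\,r^{d-1}2^j t^{-3/2}\lesssim t^{-5/2}2^{j\mu}$, while $K^j_2$ is summed over \emph{all} $j\le0$ via Lemma~\ref{lem:K_2}, where the choice $\alpha<1$ produces the geometric factor $2^{\frac32(1-\alpha)j}$ that prevents a logarithmic divergence (with your $\alpha=1$ and a single $N$, either the $t$-power or the $j$-sum fails at the endpoint). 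Your treatment of $K^j_{1,1}$, $\overline K^j_{1,2}$, $\mathcal{E}$ and $K_{[\epsilon_0,1]}$ in the large-$j$ range is fine, and in fact your parameters could be rescued by replacing Lemma~\ref{lem:u_3} in the small-$j$ range with Lemma~\ref{lem:K_11} (at $\rho=\epsilon_0$) for $K^j_1$ and Lemma~\ref{lem:K_2} with a smaller exponent (e.g.\ $N=2$) for $K^j_2$, but as written the small-$j$ step is wrong and the final estimate loses a $\log t$.
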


\begin{proof} We perform exactly the same decomposition as in the step 1 of the proof of Theorem~\ref{thm:K2}. We set then $r(j)=2^{\alpha j}$ (for some $\alpha\in(\frac{1}{2},1)$ sufficiently close to $1$), $N=\frac{5}{2}$,
$$
\widetilde{\rho}(t,j)=2^{j\mu} (t2^{j(1+2\alpha)})^{-1} \quad \mbox{and} \quad \rho(t,j) = \min(\widetilde{\rho},\epsilon_0)
$$ 
(for a sufficiently small parameter $\mu>0$).

Define then
$$ 
\widetilde{K} \overset{def}{=} \sum_{j\leq 0,\ \widetilde{\rho} \geq \epsilon_0} K^j + \sum_{j \leq 0,\ \widetilde{\rho} \leq \epsilon_0} \left[ K^j_{1,1} + \overline{K}^j_{1,2} + K^j_2 \right] + K_{[\epsilon_0,1]},
$$
and
$$
{\mathcal E} \overset{def}{=} \sum_{j \leq 0,\ \widetilde{\rho} \leq \epsilon_0} {\mathcal E}_j \quad \mbox{with} \quad
{\mathcal E}_j(f,g)\overset{def}{=} i \left(T_{R^j_{1,2}}(x_pf,g) + T_{R^j_{1,2}}(f,x_p g)\right).
$$

The operator $K_{[\epsilon_0,1]}$ can be treated exactly as in Theorem~\ref{thm:K2}, so we skip this argument. As for $\mathcal{E}$, it is controlled by Lemma \ref{lem:E} provided $\alpha$ is chosen sufficiently close to $1$:
$$
\| \mathcal{E} \|_{L^{2,s} \times L^{2,s} \rightarrow L^\infty} \lesssim \sum_{j \leq 0,\ \widetilde{\rho} \leq \epsilon_0} \| \mathcal{E}_j \|_{L^{2,s} \times L^{2,s} \rightarrow L^\infty} \lesssim \sum_{j \leq 0} t^{-5/2} 2^{\frac{2}{3}(\alpha-1) j} 2^{j \nu} \lesssim t^{-5/2}.
$$
Thus, it only remains to deal with $\widetilde{K}$, to which we now turn. First, using that for $\rho=\epsilon_0$, $K^j_1 = K^j_{1,1}$, Lemma \ref{lem:K_11} gives
\begin{align*}
\sum_{j\leq 0,\ \widetilde{\rho} \geq \epsilon_0}  |K^j_{1}(y,z)| \lesssim \sum_{j\leq 0,\ \widetilde{\rho} \geq \epsilon_0} \frac{2^{j(1+2\alpha)}}{t^{3/2}} \lesssim t^{-\frac{5}{2}} \sum_{j\leq 0,\ \widetilde{\rho} \geq \epsilon_0} 2^{j\mu} \lesssim t^{-\frac{5}{2}}.
\end{align*}
Next, still using Lemma \ref{lem:K_11}, we obtain 
\begin{align*}
\sum_{j\leq 0,\ \widetilde \rho\leq \epsilon_0} |K^j_{1,1}(y,z)| & \lesssim \sum_{j\leq 0,\ \widetilde \rho\leq \epsilon_0} \frac{\rho  2^{j(1+2\alpha)}}{t^{\frac{3}{2}}} \lesssim t^{-\frac{5}{2}} \sum_{j\leq 0} 2^{j \mu} \lesssim t^{-\frac{5}{2}},
\end{align*}
whereas Lemma \ref{lem:K_12} yields
\begin{align*}
\sum_{j\leq 0,\ \widetilde \rho\leq \epsilon_0} & |\overline{K}^j_{1,2}(y,z)| \lesssim \sum_{j\leq 0,\ \widetilde \rho\leq \epsilon_0} \frac{2^{j(2\alpha-1-\epsilon \mu +2\epsilon\alpha)}}{t^{\frac{5}{2}}} [2^{-j} + |y|+|z|]^{\epsilon} \\
& \lesssim t^{-\frac{5}{2}} \sum_{j\leq 0,\ \widetilde \rho\leq \epsilon_0} 2^{j(2\alpha-1-\epsilon \mu +2\epsilon\alpha-\epsilon)} + t^{-\frac{5}{2}}[|y|+|z|]^{\epsilon} \sum_{j\leq 0,\ \widetilde \rho\leq \epsilon_0} 2^{j(2\alpha-1-\epsilon \mu +2\epsilon\alpha)}  \\
 & \lesssim t^{-\frac{5}{2}}[1+(|y|+|z|)^{\epsilon}],
 \end{align*}
by choosing $\mu$ sufficiently small. For $K^j_2$, we use Lemma \ref{lem:K_2} (with $N=\frac{5}{2}$) to get
\begin{align*}
\sum_{j\leq 0} |K^j_2(y,z)| \lesssim  & \sum_{j\leq 0} (t2^{j(1+\alpha)})^{-\frac{5}{2}} 2^{j(4+\alpha)} \lesssim t^{-\frac{5}{2}},
\end{align*}
since $\alpha<1$.

\mb Putting all these estimates together gives $|\widetilde{K}(y,z)| \lesssim t^{-\frac{5}{2}} \left[ 1 + (|y|+|z|)^{1+\epsilon}\right]$ uniformly in $(y,z)$ for every $\epsilon>0$, which is the desired estimate.
\end{proof}

\subsection{Intermediate estimates}

First, we start by recalling the following useful stationary phase result : 

\begin{lemma} \label{lem:oscillatory}
Let $K$ be a compact subset of $\mathbb{R}^n$. Then if $F$ is compactly supported in $K$, $\lambda$ is smooth and has a unique stationary point at $0$ with a non-degenerate Hessian, then uniformly in $\theta \in \mathbb{R}^n$,
$$
\left|\int_{\R^n} e^{it \lambda(x)} e^{ix\theta} F(x) dx\right| \lesssim t^{-\frac{n}{2}} \|\widehat{F}\|_{L^1}.
$$ 
\end{lemma}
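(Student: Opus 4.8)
The plan is to reduce the estimate to the classical stationary phase lemma by Fourier-expanding the amplitude $F$. Fix a cut-off $\chi\in\mathcal{C}_0^\infty(\mathbb{R}^n)$ equal to $1$ on a neighbourhood of $K$ (so that $F=\chi F$), chosen with support small enough that $\operatorname{Hess}[\lambda]$ is non-degenerate throughout $\operatorname{Supp}\chi$; this is possible since $\operatorname{Hess}[\lambda](0)$ is non-degenerate (and in the applications of this lemma in the paper the Hessian is in fact non-degenerate everywhere, by the standing assumption \eqref{pingouin} and (A3)). Writing $F(x)=\frac{1}{(2\pi)^{n/2}}\int\widehat{F}(\zeta)e^{ix\zeta}\,d\zeta$ and using Fubini,
$$
\int_{\mathbb{R}^n}e^{it\lambda(x)}e^{ix\theta}F(x)\,dx
=\frac{1}{(2\pi)^{n/2}}\int\widehat{F}(\zeta)\left[\int_{\mathbb{R}^n}e^{it\lambda(x)+ix(\theta+\zeta)}\chi(x)\,dx\right]d\zeta,
$$
so, integrating against $|\widehat{F}(\zeta)|$, it suffices to prove the uniform bound
$$
\sup_{\vartheta\in\mathbb{R}^n}\left|\int_{\mathbb{R}^n}e^{it\lambda(x)+ix\vartheta}\chi(x)\,dx\right|\lesssim t^{-n/2}.
$$

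For this model integral we may assume $t\geq1$ (for $t\leq1$ it is bounded by $\|\chi\|_{L^1}\lesssim1\leq t^{-n/2}$). Set $A:=1+2\sup_{\operatorname{Supp}\chi}|\nabla\lambda|$ and distinguish two regimes. If $|\vartheta|>At$, then on $\operatorname{Supp}\chi$ the gradient of the full phase obeys $|t\nabla\lambda(x)+\vartheta|\geq|\vartheta|-t\sup|\nabla\lambda|\geq\tfrac12|\vartheta|\gtrsim t+|\vartheta|$, so repeated integration by parts (non-stationary phase) gives a bound $O_N\big((t+|\vartheta|)^{-N}\big)\lesssim t^{-n/2}$ for $N\geq n/2$. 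If $|\vartheta|\leq At$, write $\vartheta=t\varrho$ with $|\varrho|\leq A$; the phase is $t\lambda_\varrho$ with $\lambda_\varrho(x):=\lambda(x)+x\cdot\varrho$, and $\operatorname{Hess}[\lambda_\varrho]=\operatorname{Hess}[\lambda]$ is non-degenerate with $|\det\operatorname{Hess}[\lambda_\varrho]|$ bounded below on $\operatorname{Supp}\chi$, uniformly in $\varrho$. The stationary phase lemma, in the form valid for amplitudes of compact support whose phase has Hessian bounded away from $0$ on that support (see e.g. H\"ormander, \emph{The Analysis of Linear Partial Differential Operators}, or Stein, \emph{Harmonic Analysis}), then yields $\big|\int e^{it\lambda_\varrho}\chi\,dx\big|\lesssim t^{-n/2}$, with a constant depending only on $\operatorname{Supp}\chi$, finitely many derivatives of $\lambda$ (the term $x\cdot\varrho$ being linear with $|\varrho|\leq A$), the lower bound on the Hessian, and $\|\chi\|_{\mathcal{C}^N}$. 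This is uniform in $\varrho$, hence in $\vartheta$; combining the two regimes yields the model bound, and the lemma follows.

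The only delicate point is precisely this uniformity in $\vartheta$: in the regime $|\vartheta|\sim t$ the critical point of $\lambda_\varrho$ can move inside $\operatorname{Supp}\chi$ and exit through its boundary, so one cannot localize around a fixed interior critical point. This is why we invoke the robust version of the stationary phase lemma, whose hypothesis is a lower bound on $\det\operatorname{Hess}$ over the whole (compact) support of the amplitude rather than the existence of an interior critical point; that version is insensitive to the location of the critical point and gives a constant depending only on the quantities listed above. Equivalently, one may subordinate $\chi$ to a finite partition of unity into small balls on which $\nabla\lambda$ is injective: on each ball either $\lambda_\varrho$ has a single non-degenerate critical point (ordinary stationary phase) or the phase gradient is bounded below (non-stationary phase), both cases being uniform in $\varrho$.
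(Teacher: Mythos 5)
The paper states this lemma without proof (it is ``recalled'' as a standard stationary phase fact), so there is nothing in the text to compare your argument with line by line; what you write is the standard proof and, in substance, it is correct: insert a cutoff, expand $F$ by Fourier inversion so that matters reduce to a bound on $\int e^{it\lambda(x)+ix\vartheta}\chi(x)\,dx$ uniform in $\vartheta$, dispose of $|\vartheta|\gtrsim t$ by non-stationary phase, and for $|\vartheta|\lesssim t$ absorb the linear term into the phase and apply stationary phase with constants uniform over the bounded family of shifts (H\"ormander's Theorem 7.7.5, whose constants do not require the critical point to lie in the support of the amplitude --- which is exactly the uniformity issue you rightly single out).

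One point needs repair, though it concerns the (loose) statement of the lemma as much as your argument. You say $\chi$ can be ``chosen with support small enough that $\operatorname{Hess}[\lambda]$ is non-degenerate throughout $\operatorname{Supp} \chi$''; this is not possible, since $\chi$ must equal $1$ on a neighborhood of $K$, so $\operatorname{Supp}\chi \supset K$ and you do not get to shrink it. Under the hypotheses as literally written (non-degeneracy of the Hessian only at the unique critical point $0$), the uniform-in-$\theta$ estimate can genuinely fail: in dimension $1$, if at some $x_1 \in K$ one has $\lambda'(x_1)=c \neq 0$ but $\lambda''(x_1)=\lambda'''(x_1)=0$, then the choice $\theta=-tc$ makes the full phase stationary to fourth order at $x_1$, and for $F$ supported near $x_1$ with $F(x_1)\neq 0$ the integral is of size $t^{-1/4} \gg t^{-1/2}$ while $\|\widehat{F}\|_{L^1}$ is a fixed constant. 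So the hypothesis actually needed, and the one under which your proof works verbatim, is that $\operatorname{Hess}[\lambda]$ is non-degenerate, with a uniform lower bound on $|\operatorname{det}|$, on a neighborhood of $K$; as you observe, this is what holds in every application in the paper, where the phase is $\psi = X\xi + a(\xi)+\sigma\phi$ and $\operatorname{Hess}_\xi[\psi]$ is non-degenerate on the support of the symbol for $\sigma<\epsilon_0$ thanks to (\ref{pingouin}). With that reading, the only remaining imprecision is in your closing dichotomy (``either $\lambda_\varrho$ has a single non-degenerate critical point in the ball or the phase gradient is bounded below on it''): a critical point lying just outside a ball leaves the gradient small on that ball, so the dichotomy should be phrased with slightly enlarged balls (critical point within distance $\delta$ of the ball versus at distance $\geq \delta$ from it), after which both cases are indeed uniform in $\varrho$ --- or one simply relies, as you also do, on the version of stationary phase that is insensitive to the location of the critical point.
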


\begin{lemma} \label{lem:u_3} For $2^j\leq t^{-\frac{1}{2}}$, the kernels $K^j_1$ and $K^j_2$ satisfy the following easy estimates:  
$$ \left|K^j_1(y,z)\right|+\left|K^j_2(y,z)\right| \lesssim \frac{2^{dj}}{t} {\bf 1}_{2^{j}\leq t^{-1}} + \frac{2^{dj}}{t} \langle \log(t2^{j}) \rangle {\bf 1}_{t^{-1} \leq 2^{j}}.$$
\end{lemma}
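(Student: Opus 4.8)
The plan is to estimate $K^j_1$ and $K^j_2$ directly from their oscillatory-integral formulas, using only a crude volume count together with a single integration by parts in $\sigma$; this is why the lemma is ``easy''. I would write both kernels uniformly as
\[
\frac{1}{(2\pi)^d}\int_0^{\epsilon_0}\iint e^{it\psi(\xi,\eta,\sigma)}\, a^{\pm}(\xi,\eta,\sigma)\, e^{-i(y-z)\eta-iz\xi}\,d\xi\,d\eta\,d\sigma ,
\]
with $a^{+}=m_2\chi_{\Omega_2^j}\,\chi\big(\tfrac{\nabla_\xi\psi-t^{-1}z}{r}\big)$ for $K^j_1$ and $a^{-}=m_2\chi_{\Omega_2^j}\,\big[1-\chi\big(\tfrac{\nabla_\xi\psi-t^{-1}z}{r}\big)\big]$ for $K^j_2$. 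The amplitudes are bounded by $1$ and supported in $\Omega_2^j$, on which, by construction and by (A1)--(A2) together with the smallness of $\operatorname{Supp} m$, one has $|\nabla_\eta\phi|\sim 2^j$, $|\nabla_\xi\phi|\sim 1$ and $\operatorname{Hess}_\eta\phi$ non-degenerate. Everything below will be uniform in $(y,z)$ and in the suppressed parameter $X=x/t$.

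The only geometric input needed is the volume bound $\big|\Omega_2^j\cap\{|\phi|\sim 2^k\}\big|\lesssim 2^{dj}2^k$ for $2^k\lesssim 2^j$. To see it, I would fix $\xi$: non-degeneracy of $\operatorname{Hess}_\eta\phi$ confines $\{\eta:|\nabla_\eta\phi(\xi,\eta)|\sim 2^j\}$ to a ball of radius $\sim 2^j$ about the point $\eta_*(\xi)$ where $\nabla_\eta\phi(\xi,\cdot)$ vanishes, on which $\phi(\xi,\eta)=\phi(\xi,\eta_*(\xi))+O(2^{2j})$; since $\xi\mapsto\phi(\xi,\eta_*(\xi))$ has gradient $\nabla_\xi\phi\neq 0$, the $\xi$'s for which this ball meets $\{|\phi|\sim 2^k\}$ fill a set of measure $\lesssim 2^k$, and integrating the $\eta$-slices gives the claim. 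In particular $|\Omega_2^j|\lesssim 2^{(d+1)j}$. Consequently, when $2^j\le t^{-1}$ we have $|\phi|\le 2|\nabla_\eta\phi|\lesssim 2^j\le t^{-1}$ throughout $\Omega_2^j$, and the trivial bound already gives $|K^j_1|+|K^j_2|\lesssim|\Omega_2^j|\lesssim 2^{(d+1)j}=2^{dj}2^j\le 2^{dj}/t$, the first term of the lemma.

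For $t^{-1}\le 2^j\le t^{-1/2}$ I would split $\Omega_2^j$ (via a smooth partition of $\phi$) into a ``core'' $\{|\phi|\le t^{-1}\}$ and the dyadic shells $\{|\phi|\sim 2^k\}$, $t^{-1}\lesssim 2^k\lesssim 2^j$, of which there are $\sim\log(t2^j)$. The core has volume $\lesssim 2^{dj}/t$ by the bound above, hence contributes $\lesssim 2^{dj}/t$. On each shell, where $\phi\neq 0$, I would integrate by parts once in $\sigma$ using $\tfrac{1}{it\phi}\partial_\sigma e^{it\psi}=e^{it\psi}$ (legitimate because $\phi$ does not depend on $\sigma$). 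The boundary contributions at $\sigma=0,\epsilon_0$ are $\lesssim(t|\phi|)^{-1}\sim(t2^k)^{-1}$ pointwise on the shell, so they integrate to $\lesssim(t2^k)^{-1}\,|\Omega_2^j\cap\{|\phi|\sim 2^k\}|\lesssim 2^{dj}/t$. In the interior term the $\sigma$-derivative falls only on $\chi\big(\tfrac{\nabla_\xi\psi-t^{-1}z}{r}\big)$, giving $\chi'(\cdot)\,\tfrac{\nabla_\xi\phi}{r}$: this has size $\lesssim r^{-1}$ and, for fixed $(\xi,\eta)$, is supported on a $\sigma$-set of measure $\lesssim r$, since $\sigma\mapsto\nabla_\xi\psi-t^{-1}z$ is affine with velocity $\nabla_\xi\phi$ of size $\sim 1$. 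Together with the $\sigma$-free factor $(t|\phi|)^{-1}\sim(t2^k)^{-1}$ this bounds the $\sigma$-integral by $\lesssim r^{-1}\cdot r\cdot(t2^k)^{-1}=(t2^k)^{-1}$, and integration over the shell once more yields $\lesssim 2^{dj}/t$, crucially with no leftover dependence on $r$. Summing the $\sim\log(t2^j)$ shells and adding the core gives $|K^j_1|+|K^j_2|\lesssim 2^{dj}\langle\log(t2^j)\rangle/t$.

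The main obstacle, such as it is, is the shell volume count: it is where (A1) (non-degeneracy of $\operatorname{Hess}_\eta\phi$, localizing the $\eta$-slices to a $2^j$-ball) and (A2) (non-vanishing of $\nabla_\xi\phi$, producing the factor $2^k$ from the level sets of $\phi$) actually enter. Everything else is routine integration-by-parts bookkeeping; in particular the logarithm is nothing but the number of dyadic shells $t^{-1}\lesssim 2^k\lesssim 2^j$, and one checks the two regimes overlap harmlessly at $2^j\sim t^{-1}$, where $\langle\log(t2^j)\rangle\sim 1$.
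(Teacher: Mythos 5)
Your proof is correct and follows essentially the same route as the paper's: bound the $\sigma$-integral pointwise by $\min\big(1,(t|\phi|)^{-1}\big)$ and combine this with the dyadic volume count $|\Omega_2^j\cap\{|\phi|\sim 2^k\}|\lesssim 2^{dj+k}$, summing the $\sim\log(t2^j)$ shells between $t^{-1}$ and $2^j$. If anything you are more careful than the paper, which integrates out $\sigma$ explicitly while glossing over the $\sigma$-dependence of the cutoff $\chi\big(\frac{\nabla_\xi\psi-t^{-1}z}{r}\big)$ (your integration by parts in $\sigma$ handles exactly this) and states the shell volume bound without proof (your slicing argument is valid precisely in the range $2^k\gtrsim t^{-1}\geq 2^{2j}$ where it is invoked, thanks to the hypothesis $2^j\leq t^{-1/2}$).
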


\begin{proof}
We only deal with $K^j_1$, since the exact same proof applies to $K^j_2$. We recall that 
\begin{align*}
& K^j_1(y,z) \\
& \ \ = \frac{1}{(2\pi)^d} \int_0^{\epsilon_0}  \int_{\R^{2d}}  e^{i t\left(X\xi +a(\xi)+ \sigma \phi(\xi,\eta)\right)} (m_2 \chi_{\Omega^j_2}) e^{-i(y-z)\eta-iz\xi} \chi\left(\frac{\nabla_\xi \psi -t^{-1}z}{r}\right)\,d\xi \,d\eta\,d\sigma \\ 
& \ \ =  \frac{1}{(2\pi)^d}  \int_{\R^{2d}}  e^{it\left(X\xi +a(\xi)\right)} \frac{e^{i\epsilon_0 t\phi(\xi,\eta)}-1}{it\phi(\xi,\eta)} (m_2 \chi_{\Omega^j_2}) e^{-i(y-z)\eta-iz\xi}  \chi\left(\frac{\nabla_\xi \psi -t^{-1}z}{r}\right)\,d\xi \,d\eta,
\end{align*}
so that
\begin{align*} \left|K^j_1(y,z)\right| & \lesssim \int_{\R^{2d}}  \left|\frac{e^{i\epsilon_0 t\phi(\xi,\eta)}-1}{t\phi(\xi,\eta)}\right| \chi_{\Omega^j_2}(\xi,\eta) \,d\xi \,d\eta \\
 & \lesssim \int_{\Omega^j_2} \max(1, t|\phi(\xi,\eta)|)^{-1} \, d\xi d\eta.
 \end{align*}
Let $\Omega_{k,j}=\{ (\xi,\eta)\in \Omega^j_2,\ 2^k \leq |\phi|< 2^{k+1}\}$. Then $|\Omega_{k,j}| \lesssim 2^{k+dj}$. Hence,
\begin{align*}  \int_{\Omega^j_2} \max(1, t|\phi(\xi,\eta)|)^{-1} \, d\xi\, d\eta & \lesssim \sum_{2^k \leq t^{-1},\ 2^k \lesssim 2^j}   2^{k+dj} + \sum_{t^{-1} \leq 2^k \lesssim 2^j } t^{-1}2^{dj} \end{align*}
which gives the desired bound.
\end{proof}

\begin{lemma} \label{lem:K_11} For $d\in\{2,3\}$, $\rho \leq \epsilon_0$ and $r\geq 2^j$, the kernel $K^j_{1,1}$ satisfies the following estimate: 
$$ \left|K^j_{1,1}(y,z)\right| \lesssim \frac{\rho r^{d-1} 2^j}{t^{\frac{d}{2}}}.$$
\end{lemma}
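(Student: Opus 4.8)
The plan is to bound the $\sigma$-integral in $K^j_{1,1}$ trivially by $\rho$ times a supremum, reduce the inner $(\xi,\eta)$-integral to a one-dimensional-in-$\xi$ stationary phase, and then use the two cut-offs $\chi_{\Omega^j_2}$ and $\chi\big((\nabla_\xi\psi-t^{-1}z)/r\big)$ to control the remaining $\eta$-integration. Stationary phase in $\xi$ is legitimate because $\operatorname{Hess}_\xi[\psi]=\operatorname{Hess}[a]+\sigma\operatorname{Hess}_\xi[\phi]$ is non-degenerate for $\sigma\le\rho\le\epsilon_0$, by~(\ref{pingouin}) and the choice of $\epsilon_0$.

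Concretely, I would fix $\sigma\in[0,\rho]$ and $\eta$ and write the $\xi$-integral as $\int_{\mathbb{R}^d}e^{it\psi(\xi,\eta,\sigma)}e^{-iz\xi}G_{\eta,\sigma,z}(\xi)\,d\xi$ with $G_{\eta,\sigma,z}(\xi)=(m_2\chi_{\Omega^j_2})(\xi,\eta)\,\chi\big(\tfrac{\nabla_\xi\psi(\xi,\eta,\sigma)-t^{-1}z}{r}\big)$. Applying Lemma~\ref{lem:oscillatory} with the linear factor $e^{-iz\xi}$ in the role of $e^{ix\theta}$ (and the stationary point translated to the origin) gives that this integral is $\lesssim t^{-d/2}\|\widehat{G_{\eta,\sigma,z}}\|_{L^1(\mathbb{R}^d)}$, uniformly in $z$ and $\eta$. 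I would then bound $\|\widehat{G_{\eta,\sigma,z}}\|_{L^1}$ by the interpolation $\|\widehat G\|_{L^1}\lesssim\|\widehat G\|_{L^2}^{1/2}\,\||\xi|^d\widehat G\|_{L^2}^{1/2}=\|G\|_{L^2}^{1/2}\,\|\partial_\xi^d G\|_{L^2}^{1/2}$ used already in the proof of Lemma~\ref{lem:1}, together with the observations that on $\Omega^j_2$ one has $|\nabla_\eta\phi|\sim2^j$, so by non-degeneracy of $\operatorname{Hess}[c]$ the slice $\{\xi:(\xi,\eta)\in\Omega^j_2\}$ lies in a ball of radius $\sim2^j$ and hence has measure $\lesssim2^{jd}$, while every $\xi$-derivative of $G_{\eta,\sigma,z}$ costs a factor $\lesssim2^{-j}$ ($\chi_{\Omega^j_2}$ contributes $2^{-j}$, $m_2$ contributes $\operatorname{dist}(\cdot,\mathcal{R})^{-1}\lesssim2^{-j}$ on $\Omega^j_2$, and $\chi(\cdot/r)$ contributes $r^{-1}\le2^{-j}$ since $r\ge2^j$). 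These give $\|G\|_{L^2}\lesssim2^{jd/2}$, $\|\partial_\xi^d G\|_{L^2}\lesssim2^{-jd/2}$, hence $\|\widehat{G_{\eta,\sigma,z}}\|_{L^1}\lesssim1$.

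It remains to bound the measure of the set $A_\sigma$ of $\eta$ for which $G_{\eta,\sigma,z}\not\equiv0$, which is where the cut-off $\chi\big((\nabla_\xi\psi-t^{-1}z)/r\big)$ enters. Let $\xi^*(\eta)$ be the zero of $\nabla_\eta\phi(\cdot,\eta)$ in our small domain (well defined since $\operatorname{Hess}[c]$, and $\operatorname{Hess}_\eta[\phi]$ by A1, are non-degenerate). On the one hand $A_\sigma\subset\pi_\eta(\Omega^j_2)$, a tube of width $\sim2^j$ around the $(d-1)$-dimensional manifold $\pi_\eta\mathcal{R}$ — that $\pi_\eta|_{\mathcal{R}}$ is an immersion follows, exactly as in Step~3 of the proof of Theorem~\ref{theoremeasymptotique}, from A1, A2 and non-degeneracy of $\operatorname{Hess}[c]$. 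On the other hand, nonemptiness of the support of $G_{\eta,\sigma,z}$ forces the ball of radius $\sim r$ around $(\nabla_\xi\psi(\cdot,\eta,\sigma))^{-1}(t^{-1}z)$ to meet the ball of radius $\sim2^j$ around $\xi^*(\eta)$, i.e. $|\Theta_\sigma(\eta)|\lesssim r$ where $\Theta_\sigma(\eta):=\nabla_\xi\psi(\xi^*(\eta),\eta,\sigma)-t^{-1}z$; since the differential of $\Theta_\sigma$ equals $\operatorname{Hess}[a](\xi^*(\eta))\,(\operatorname{Hess}[c])^{-1}\operatorname{Hess}_\eta[\phi]+O(\sigma)$, invertible for $\epsilon_0$ small, this confines $\eta$ to a set of measure $\lesssim r^d$, uniformly in $\sigma$ (the set only translates as $\sigma$ varies). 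The intersection of a ball of radius $r$ with a tube of width $2^j\le r$ has measure $\lesssim r^{d-1}2^j$, so $|A_\sigma|\lesssim r^{d-1}2^j$. Combining the three estimates, $|K^j_{1,1}(y,z)|\lesssim\int_0^\rho|A_\sigma|\,t^{-d/2}\,d\sigma\lesssim\rho\,r^{d-1}2^j\,t^{-d/2}$.

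I expect the main obstacle to be this last step: one must verify that the joint support constraint from $\chi_{\Omega^j_2}$ and $\chi(\cdot/r)$ pins $\eta$ down to a set of measure only $\lesssim r^{d-1}2^j$ — one power of $r$ fewer than the ambient dimension, with the deficit supplied by the tube width $2^j$ — and that this holds uniformly in $\sigma\in[0,\rho]$ even though the stationary point (equivalently, the two ball centers above) moves with $\sigma$. The other ingredients, namely non-degeneracy of $\operatorname{Hess}_\xi[\psi]$ for $\sigma\le\epsilon_0$, the slice volume bound, and $\|\widehat{G}\|_{L^1}\lesssim1$, are routine consequences of~(\ref{pingouin}), A1, A2 and the derivative bounds built into $m_2$ and $\chi_{\Omega^j_2}$.
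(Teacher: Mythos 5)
Your proposal is correct and takes essentially the same route as the paper's proof: a uniform stationary phase in $\xi$ (with $\|\widehat{G}\|_{L^1}\lesssim 1$ coming from the $\xi$-support of measure $\sim 2^{jd}$ and the $2^{-j}$-per-derivative bounds on $m_2\chi_{\Omega^j_2}$ and $\chi(\cdot/r)$), a bound $\lesssim 2^j r^{d-1}$ on the measure of the $\eta$-support obtained by combining the $\sim 2^j$-proximity of $\Omega^j_2$ to $\mathcal{R}$ with the radius-$\sim r$ confinement forced by the cutoff $\chi\bigl((\nabla_\xi\psi-t^{-1}z)/r\bigr)$, and a trivial integration in $\sigma$ over $[0,\rho]$. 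The only differences are cosmetic: the paper confines the support to an $r$-ball via the joint map $\Lambda(\xi,\eta)=(\nabla_\eta\phi,\nabla_\xi\psi)$ and a linearization of $(\phi,\nabla_\eta\phi)$ around $\mathcal{R}$, whereas you work with the reduced map $\Theta_\sigma(\eta)$ through $\xi^*(\eta)$ and the projection $\pi_\eta\mathcal{R}$, and the paper gets the Fourier $L^1$ bound from pointwise decay of $\mathcal{F}_\xi$ rather than your $L^2$ interpolation.
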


\begin{proof}
We set
$$ A_j(\eta,\sigma,y,z) \overset{def}{=}  \int_{\R^{d}} a_j(\xi,\eta,\sigma,y,z)\,d\xi $$
with
$$
a_j(\xi,\eta,\sigma,y,z) \overset{def}{=} e^{it\psi(\xi,\eta,\sigma)} m_2(\xi,\eta) \chi_{\Omega^j_2}(\xi,\eta) e^{-i(y-z)\eta - iz\xi}  \chi\left(\frac{\nabla_\xi \psi -t^{-1}z}{r}\right)
$$
so that $\displaystyle K^j_{1,1} = \int_{0}^\rho \int_{\mathbb{R}^d} A_j d\eta\, d\sigma.$
For every fixed $(\eta,\sigma)$, we have
$$
\left| \operatorname{Supp} \chi_{\Omega^j_2}(\cdot,\eta) \right| \lesssim 2^{dj}
$$
(which follows from the fact that $\Omega^j_2$ is in a $\sim 2^j$ neighborhood of $\mathcal{S} = \{ \nabla_\eta \phi = 0 \}$, and $\nabla_\xi \nabla_\eta \phi = - \operatorname{Hess} [c] \ (\xi-\eta)$ is not degenerate) and
\begin{equation}
\label{grive}
\left| \nabla_\xi^\alpha \left( m_2(\xi,\eta) \chi_{\Omega^j_2}(\xi,\eta)\chi\left(\frac{\nabla_\xi \psi -t^{-1}z}{r}\right)\right) \right| \lesssim 2^{-|\alpha| j}.
\end{equation}
The two previous inequalities imply
$$ \left|{\mathcal F}_{\xi} \left[m_2(\xi,\eta) \chi_{\Omega^j}(\xi,\eta)\chi\left(\frac{\nabla_\xi \psi -t^{-1}z}{r}\right)\right](a)\right|\lesssim 2^{jd} (1+2^{j} |a|)^{-M}$$
for every integer $M\geq 1$. Hence,
\begin{equation} \left\|{\mathcal F}_{\xi} \left[m_2(\xi,\eta) \chi_{\Omega^j}(\xi,\eta)\chi\left(\frac{\nabla_\xi \psi -t^{-1}z}{r}\right)\right]\right\|_{L^1} \lesssim 1. \label{eq:fo} \end{equation}
Observe now that the phase $\psi$ satisfies $\operatorname{Hess}_\xi [\psi] = \operatorname{Hess}_\xi[a] + \sigma \operatorname{Hess}_\xi [\phi]$; thus it is not degenerate in the $\xi$ variable for $\sigma<\epsilon_0$ provided $\epsilon_0$ is taken sufficiently small, which we ensure. Therefore, Lemma \ref{lem:oscillatory} gives
$$ \left| A_j(\eta,\sigma,y,z)\right| \lesssim t^{-\frac{d}{2}},$$
uniformly in $\eta,\sigma,y,z$.

We claim that $\operatorname{Supp}_\eta A_j$  (for fixed, $\sigma,x,y,z)$) has a measure less than $r^{d-1} 2^j$. Integrating the bound $|A_j| \lesssim t^{-d/2}$ over $(\eta,\sigma) \in \operatorname{Supp}_\eta A_j \times [0,\rho]$ would then give the bound 
$$
|K_{1,1}^j| \lesssim  t^{-d/2} | \operatorname{Supp}_\eta A_j | \rho \lesssim
\frac{2^j r^{d-1} \rho}{t^{d/2}},
$$
which is the desired result.

We now check this claim.

\begin{itemize}
\item Consider first the map
$$ \Lambda\overset{def}{:}(\xi,\eta)\in\R^{2d} \mapsto ( \nabla_\eta \phi(\xi,\eta) , \nabla_\xi \psi(\xi,\eta,\sigma)).$$
A computation gives
$$ D \Lambda(\xi,\eta) = \left(\begin{array}{cc}
                                -\operatorname{Hess}[c](\xi-\eta) &  \operatorname{Hess}_\eta [\phi](\xi,\eta) \\
                                 \operatorname{Hess}_\xi [\psi](\xi,\eta,\sigma) & -\sigma \operatorname{Hess}[c](\xi-\eta)
                               \end{array} \right) $$
which, by the assumption (A1) is invertible for sufficiently small $\sigma$ provided $\epsilon_0$ is chosen sufficiently small. Observe that the support in $(\xi,\eta)$ of $a_j$ is contained in a region where $|\nabla_\eta \phi| \lesssim 2^j \leq r$ and $|\nabla_\xi \psi - t^{-1} z| \lesssim r$. Since $\Lambda$ is a diffeomorphism, it means that the support of $a_j$ in $(\xi,\eta)$ is contained in a ball of radius $\lesssim r$.
\item Next, observe that the support of $a_j$ is contained in a $\sim 2^j$ neighborhood of $\mathcal{R}$. Consider a point $(\xi_0,\eta_0)$ of $\mathcal{R}$, and let $(\xi,\eta) = (\xi_0,\eta_0) + (d\xi,d\eta)$. At the linearized level, $(\xi,\eta)$ is in a $2^j$ neighborhood of $\mathcal{R}$ if and only if
$$
|\nabla_\xi \phi \cdot d\xi| \lesssim 2^j \quad \mbox{and} \quad |\operatorname{Hess}_\eta [\phi] d\eta - \operatorname{Hess} [c] d\xi| \lesssim 2^j.
$$
This implies that $\operatorname{dist}(d\eta,E)\lesssim 2^j$, where $E$ is the $(d-1)$-dimensional linear space $(\operatorname{Hess}_\eta [\phi])^{-1} \operatorname{Hess} [c] (\nabla_\xi \phi)^\perp$. At the nonlinear level, this means that on the support of $a_j$, the variable $\eta$ is restricted to a $\sim 2^j$ neighborhood of a $(d-1)$ dimensional manifold.
\end{itemize}
Combining the two previous points gives the desired estimate:
$$
| \operatorname{Supp}_\eta A_j(\eta,\sigma,y,z) | \lesssim 2^j r^{d-1}.
$$
\end{proof}

\begin{lemma} \label{lem:K_12} For $d\in\{2,3\}$, the kernel $\overline{K}^j_{1,2}$ satisfies the following estimate: for every $\epsilon>0$
$$ \left|\overline{K}^j_{1,2}(y,z)\right| \lesssim \frac{r^{d-1}}{t^{\frac{d}{2}+1+\epsilon}} \frac{1}{(\rho 2^j)^\epsilon} 2^{-j} [2^{-j} + |y|+|z|]^{\epsilon}.$$
\end{lemma}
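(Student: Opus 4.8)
The plan is to follow the proof of Lemma~\ref{lem:K_11} — stationary phase in $\xi$, then crude integration in $\eta$ and $\sigma$ — the only new feature of $\overline{K}^j_{1,2}$ being the factor $\frac{1}{it\sigma\partial_{\eta_p}\phi}$, which introduces a $\sigma^{-1}$ singularity in $\sigma$ that must be absorbed using the oscillation in $\eta$ carried by the weight $e^{-i(y-z)\eta}$.

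\emph{Step 1 (stationary phase in $\xi$).} As in Lemma~\ref{lem:K_11}, for $\sigma<\epsilon_0$ the $\xi$-phase $t\psi(\xi,\eta,\sigma)-z\xi$ has a unique critical point $\Xi=\Xi(\eta,\sigma,z/t)$ with non-degenerate Hessian (by~\eqref{pingouin}), near which $\chi\big(\frac{\nabla_\xi\psi-t^{-1}z}{r}\big)$ localises $\xi$; moreover the $\xi$-amplitude
$$a_j:=\partial_{\eta_p}\!\Big[\tfrac{m_2\chi_{\Omega^j_2}}{it\sigma\partial_{\eta_p}\phi}\,\chi(\cdots)\Big]$$
is supported in a $\xi$-ball of radius $\sim 2^j$, has size $\lesssim\frac{2^{-2j}}{t\sigma}$ (the worst term being $\partial_{\eta_p}$ falling on $\frac{1}{\partial_{\eta_p}\phi}$, since $|\partial_{\eta_p}\phi|\sim 2^j$ on $\Omega^j_2$), and obeys $|\partial_\xi^\alpha a_j|\lesssim\frac{2^{-2j}}{t\sigma}2^{-j|\alpha|}$ (using $r\ge 2^j$ and $\operatorname{dist}(\cdot,\mathcal R)\sim 2^j$ on $\Omega^j_2$), so that $\|\mathcal F_\xi a_j\|_{L^1}\lesssim\frac{2^{-2j}}{t\sigma}$. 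Lemma~\ref{lem:oscillatory} then reduces $\overline K^j_{1,2}$ to $\int_\rho^{\epsilon_0}\!\int_\eta e^{i\Theta}\widetilde G\,d\eta\,d\sigma$ with $|\widetilde G|\lesssim\frac{t^{-d/2}2^{-2j}}{t\sigma}$, $\eta$-support of measure $\lesssim 2^j r^{d-1}$ (exactly as in Lemma~\ref{lem:K_11}), and — the crucial point — $\nabla_\eta\Theta=t\sigma\nabla_\eta\phi(\Xi,\eta)-(y-z)$, with $|\nabla_\eta\phi|\sim 2^j$ on the support.

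\emph{Step 2 (taming the $\sigma$-integral).} Bounded crudely, $\int_\rho^{\epsilon_0}\frac{d\sigma}{\sigma}$ costs a logarithm. Setting $\sigma_*:=\frac{|y-z|}{t2^j}$, split $[\rho,\epsilon_0]$ into the ``resonant window'' $[\tfrac12\sigma_*,2\sigma_*]\cap[\rho,\epsilon_0]$ and its complement. On the window (width $\lesssim\sigma_*$, where $\sigma\sim\sigma_*$) the crude bound $\int|\widetilde G|\,d\eta\,d\sigma\lesssim 2^jr^{d-1}\cdot\frac{t^{-d/2}2^{-2j}}{t}$ already gives $\frac{r^{d-1}2^{-j}}{t^{d/2+1}}$, since $\int_{\sigma_*/2}^{2\sigma_*}\frac{d\sigma}\sigma=O(1)$. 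Off the window $|\nabla_\eta\Theta|\gtrsim\max(t\sigma 2^j,|y-z|)$, so I integrate by parts $N$ times in $\eta$, each step producing a factor $\lesssim\frac{2^{-j}}{\max(t\sigma 2^j,|y-z|)}$ (also when the derivative falls on $\frac1{\nabla_\eta\Theta}$, since $\frac{t\sigma}{|\nabla_\eta\Theta|}\lesssim 2^{-j}$ there). On the part where $t\sigma 2^j$ dominates this leaves $\int\sigma^{-1-N}d\sigma\lesssim(\sigma_{\mathrm{low}})^{-N}$ with $\sigma_{\mathrm{low}}$ equal to $\rho$ or $\sim\sigma_*$; here the choices $r=2^j,\ \rho=(t2^{2j})^{-1}$ (for $d=2$) and $r=2^{\alpha j},\ \rho=2^{j\mu}(t2^{j(1+2\alpha)})^{-1}$ (for $d=3$, $\alpha$ near $1$) are tuned exactly so that $t2^{2j}\rho\ge1$ and $t2^{2j}\sigma_*=2^j|y-z|=:u\ge1$ on the relevant range, whence the surviving factor $(t2^{2j}\sigma_{\mathrm{low}})^{-N}\le1$. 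On the part where $|y-z|$ dominates one gets instead $u^{-N}\log\tfrac{\sigma_1}{\rho}$ with $\sigma_1\sim\sigma_*$ and $\log\tfrac{\sigma_1}{\rho}\le\log u$; since $u\ge1$ whenever that interval is nonempty, $u^{-N}$ absorbs the logarithm for $N$ large. Collecting the pieces, $|\overline K^j_{1,2}|\lesssim\frac{r^{d-1}2^{-j}}{t^{d/2+1}}$ up to a residual $\log u\lesssim_\epsilon u^\epsilon$; as $u=2^j|y-z|\le 2^j(2^{-j}+|y|+|z|)$ and $\frac{1}{t^\epsilon(\rho 2^j)^\epsilon}=2^{j\epsilon}$ in $d=2$ (with the analogous identity in $d=3$), this is exactly the stated bound (written with these slightly generous $\epsilon$-losses because they are what makes the $j$-summation close in Theorem~\ref{thm:K3}).

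The main obstacle I expect is precisely this last bookkeeping: making the $\sigma$-decomposition, the number $N$ of integrations by parts, and (in three dimensions) the exponents $(\alpha,\mu)$ mutually compatible, so that the naive $\log\rho^{-1}$ is genuinely cancelled by $\int\sigma^{-1-N}d\sigma$ at the endpoint $\sigma=\rho$ and never re-enters through the $|y-z|$-dominated regime. The non-degeneracy ingredients needed ($\operatorname{Hess}_\xi\psi$ via~\eqref{pingouin}, and $|\partial_{\eta_p}\phi|\sim2^j$, $\operatorname{dist}(\cdot,\mathcal R)\sim2^j$ on $\Omega^j_2$) are routine and already underlie Lemma~\ref{lem:K_11}.
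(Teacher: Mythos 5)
Your route is genuinely different from the paper's: you try to exploit the cancellation between $t\sigma\nabla_\eta\phi$ and $y-z$ by a window decomposition in $\sigma$ around $\sigma_*=|y-z|/(t2^j)$ and repeated integration by parts in $\eta$ after the stationary phase in $\xi$. The paper does something much cruder: it bounds the integrand pointwise in $\sigma$ in two ways --- the bound $\frac{r^{d-1}2^{-j}}{t^{d/2+1}}\frac{1}{\sigma}$ (your "crude" estimate, giving \eqref{eq:int1} with a $|\log\rho|$), and, after one more integration by parts in $\eta$ performed on the explicit kernel (so that the derivative hitting $e^{-i(y-z)\eta}$ simply pulls out the raw factor $2^{-j}[2^{-j}+|y|+|z|]$), the bound $\frac{r^{d-1}2^{-j}}{t^{d/2+1}}\frac{2^{-j}[2^{-j}+|y|+|z|]}{t\sigma^2}$ giving \eqref{eq:int2} --- and then takes the geometric mean of the two before integrating in $\sigma$. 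No cancellation, no window, no expansion in $\eta$ is needed. As written, your argument has a genuine problem at the end: note that the stated bound equals $\frac{r^{d-1}2^{-j}}{t^{d/2+1}}\bigl[\frac{2^{-j}+|y|+|z|}{t\rho 2^j}\bigr]^\epsilon$. Your Step 2, after discarding the factors $(t2^{2j}\sigma_{\mathrm{low}})^{-N}\le 1$ and $u^{-N}$, yields $\frac{r^{d-1}2^{-j}}{t^{d/2+1}}(1+\log_+ u)$ with $u=2^j|y-z|$. The identity $t^{-\epsilon}(\rho2^j)^{-\epsilon}=2^{j\epsilon}$ is special to the $d=2$ choice $\rho=(t2^{2j})^{-1}$; there is no "analogous identity" in $d=3$, where $t\rho2^j=2^{-j(2\alpha-\mu)}$, so the stated bound carries the extra factor $\bigl[\frac{2^{-j}+|y|+|z|}{2^{-j(2\alpha-\mu)}}\bigr]^\epsilon$, which is $\ll1$ whenever $|y|+|z|\ll 2^{-j(2\alpha-\mu)}$ (e.g.\ at $y=z=0$ it equals $2^{j\epsilon(2\alpha-\mu-1)}$). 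Hence for $d=3$ your final inequality is strictly weaker than the lemma and the last sentence of your proof is false. (Your weaker bound would in fact still close the sum in Theorem \ref{thm:K3}, but it is not the statement to be proved.) Within your scheme the fix is to keep the gains $(t2^{2j}\rho)^{-N}$ and $u^{-N}$ you threw away, which dominate the missing factor; or simply to switch to the two-bound interpolation above.

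A second, more technical gap: integrating by parts $N$ times in $\eta$ \emph{after} the stationary phase in $\xi$ requires writing the $\xi$-integral as $e^{i\Theta(\eta,\sigma)}\widetilde G(\eta,\sigma)$ with $|\partial_\eta^\beta\widetilde G|\lesssim \frac{t^{-d/2}2^{-2j}}{t\sigma}2^{-j|\beta|}$ and with a remainder (whose natural expansion parameter is $t2^{2j}$, not $t$) also under control in $\eta$. Lemma \ref{lem:oscillatory} gives only a modulus bound, and the $\eta$-derivatives of the unfactored integral are of size $t\sigma2^j$ times larger, so this reduction cannot be quoted; it needs a stationary-phase expansion with parameters, plus a separate (crude) treatment of the remainder, and implicitly the regime $t2^{2j}\gtrsim1$. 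This is standard machinery but it is precisely what the paper's proof is designed to avoid by doing all integrations by parts on the explicit kernel before any stationary phase.
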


\begin{proof}
Recall that
\begin{align*}
\overline{K}^j_{1,2} (y,z) = \frac{1}{(2\pi)^d} \int_\rho^{\epsilon_0}  &\int_{\R^{2d}}   e^{it\psi(\xi,\eta,\sigma)}  \\
 &   \partial_{\eta_p}\left[\frac{1}{it\sigma \partial_{\eta_p} \phi} m_2\chi_{\Omega^j} \chi\left(\frac{\nabla_\xi \psi -t^{-1}z}{r}\right) \right]e^{-i(y-z)\eta-iz\xi} \,d\xi \,d\eta\,d\sigma, 
\end{align*}
where we chose the coordinate $\eta_p$ such that $|\nabla_\eta \phi| \simeq |\partial_{\eta_p} \phi| \simeq 2^j$ (which is possible up to some other truncations). Replacing the estimate~\ref{grive} by
\begin{equation}
\left| \nabla_\xi^\alpha \, \partial_{\eta_p} \left[ \frac{1}{t\sigma \partial_{\eta_p} \phi(\xi,\eta)} m_2(\xi,\eta) \chi_{\Omega^j_2}(\xi,\eta)\chi\left(\frac{\nabla_\xi \psi -t^{-1}z}{r}\right)\right] \right| \lesssim \frac{2^{-(|\alpha|+2) j}}{t \sigma},
\end{equation}
we can proceed as in the proof of Lemma~\ref{lem:K_11}, use the stationary phase Lemma \ref{lem:oscillatory} to estimate the oscillatory integral in $\xi$ and get
\begin{equation} |{\overline K}^j_{1,2}(y,z)| \lesssim \int_\rho^{\epsilon_0} \frac{r^{d-1}2^j}{t^{\frac{d}{2}+1} 2^{2j}} \frac{d\sigma}{\sigma} \simeq \frac{r^{d-1}}{t^{\frac{d}{2}+1}} |\log(\rho)| 2^{-j}. \label{eq:int1} \end{equation}
A second integration by parts in $\eta$ using the identity $\frac{1}{i\sigma t \partial_{\eta_p} \phi} \partial_{\eta_p} e^{it \sigma \phi}$ gives by the same token
\begin{equation} |{\overline K}^j_{1,2}(y,z)| \lesssim 2^{-j} [2^{-j} + |y|+|z|] \int_\rho^{\epsilon_0} \frac{r^{d-1}2^j}{t^{\frac{d}{2}+2} 2^{2j}} \frac{d\sigma}{\sigma^2} \simeq \frac{r^{d-1}}{t^{\frac{d}{2}+2}} \frac{1}{\rho 2^j} 2^{-j} [2^{-j} + |y|+|z|]. \label{eq:int2} \end{equation}
Interpolating between (\ref{eq:int1}) and (\ref{eq:int2}) gives for every $\epsilon>0$ 
\begin{align*}
 |{\overline K}^j_{1,2}(y,z)| & \lesssim 2^{-j}[2^{-j} + |y|+|z|]^{\epsilon} \int_\rho^{\epsilon_0} \frac{r^{d-1}2^j}{t^{\frac{d}{2}+1+\epsilon} 2^{(1+\epsilon)j}} \frac{d\sigma}{\sigma^{1+\epsilon}} \\
 & \lesssim \frac{r^{d-1}}{t^{\frac{d}{2}+1+\epsilon}} \frac{1}{(\rho 2^j)^\epsilon} 2^{-j} [2^{-j} + |y|+|z|]^{\epsilon}.
\end{align*}
\end{proof}

\begin{lemma} \label{lem:E} For $d\in\{2,3\}$, as soon as $t\rho\geq 2^{ja}$ (for some parameter $a < 0$), the bilinear operator ${\mathcal E}_j$ satisfies: for every $s>\frac{3}{2}$ there exists $\nu>0$ such that
$$ \|{\mathcal E}_j \|_{L^{2,s} \times L^{2,s} \to L^\infty} \lesssim t^{-1-\frac{d}{2}} (r2^{-j})^{\frac{d-1}{d}} 2^{j\nu} .$$
\end{lemma}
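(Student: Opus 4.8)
\noindent\textbf{Proof plan for Lemma~\ref{lem:E}.} The plan is to reduce the operator bound to a weighted $L^2$ estimate on the bilinear kernel $R^j_{1,2}$, dispose of the extra factor $x_p$ by returning it to a frequency derivative, and then interpolate between a crude stationary‑phase estimate and a $TT^\ast$‑type estimate à la Lemma~\ref{lem:1}. Keep the dependence on $X=x/t$ explicit by writing $R^j_{1,2}(x;y,z)$, and denote by
$$
\Psi(\xi,\eta,\sigma,x,y,z)\overset{def}{=} t\psi(\xi,\eta,\sigma)-(y-z)\cdot\eta-z\cdot\xi
$$
the total phase occurring in $R^j_{1,2}$. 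Since $\mathcal E_j(f,g)(x)=i\int (y_p\pm z_p)\,R^j_{1,2}(x;y,z)\,f(y)g(z)\,dy\,dz$, a single application of the Cauchy--Schwarz inequality in $(y,z)$ gives
$$
\|\mathcal E_j(f,g)\|_{L^\infty}\lesssim \|f\|_{L^{2,s}}\|g\|_{L^{2,s}}\,\sup_{x}\left\|\langle y\rangle^{-s}\langle z\rangle^{-s}\,(y_p\pm z_p)\,R^j_{1,2}(x;\cdot,\cdot)\right\|_{L^2_{y,z}}.
$$
Here it is used that $s>\tfrac32\ge\tfrac d2$, so that $\langle y\rangle^{-2s}$ and $\langle z\rangle^{-2s}$ are integrable over $\mathbb R^d$ (with room to spare when $d=2$, and exactly at threshold when $d=3$); this is precisely where the hypothesis $s>\tfrac32$ will be consumed. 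Everything thus reduces to bounding the weighted $L^2$ norm of the kernel, uniformly in $x$.

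The next step is to remove the weights $y_p$ and $z_p$ by rewriting them via the phase. Differentiating $\Psi$ yields the elementary identities $z_p=t(\nabla_\xi\psi)_p-\partial_{\xi_p}\Psi$ and $y_p=t\sigma(\nabla_\eta\phi)_p+z_p-\partial_{\eta_p}\Psi$. Inserting these into $\mathcal E_j$ and integrating by parts in $\xi_p$, respectively $\eta_p$, the terms $\partial_{\xi_p}\Psi$ and $\partial_{\eta_p}\Psi$ become derivatives falling on the (smooth, $2^{-j}$‑controlled) amplitude and on $\widehat f,\widehat g$, while the factor $t\sigma(\nabla_\eta\phi)_p$ cancels exactly the singular denominator $\tfrac{1}{it\sigma\,\partial_{\eta_p}\phi}$ carried by $R^j_{1,2}$ (recall $|\nabla_\eta\phi|\simeq|\partial_{\eta_p}\phi|\simeq 2^j$ on $\Omega^j_2$). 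In this way $\mathcal E_j$ is expressed as a sum of bilinear operators whose symbols no longer carry the $\sigma^{-1}$ singularity, so the $\sigma$‑integral over $[\rho,\epsilon_0]$ contributes only $O(1)$ rather than $O(|\log\rho|)$, together with one residual piece carrying the factor $(\nabla_\xi\psi)_p$; on the support of the cutoff $\chi\!\big(\tfrac{\nabla_\xi\psi-t^{-1}z}{r}\big)$ one has $\nabla_\xi\psi=t^{-1}z+O(r)$, and this residual piece is split accordingly, the $t^{-1}z$‑part being reabsorbed as one further $z_p$ weight (controlled again by the Cauchy--Schwarz above, using $s>\tfrac32$) and the $O(r)$‑part estimated directly.

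For each of the resulting ``clean'' bilinear pieces I would prove two bounds and interpolate. First a crude one: estimate the $\xi$‑integral by the stationary phase Lemma~\ref{lem:oscillatory} (legitimate since $\operatorname{Hess}_\xi[\psi]=\operatorname{Hess}[a]+\sigma\operatorname{Hess}_\xi[\phi]$ is non‑degenerate for $\sigma\le\epsilon_0$), gaining $t^{-d/2}$; then use, exactly as in Lemma~\ref{lem:K_11}, that the $\eta$‑support of the amplitude has measure $\lesssim 2^j r^{d-1}$ and that the $\sigma$‑range has length $O(1)$; this produces a pointwise kernel bound of the form $\sim t^{-1-d/2}\,2^j r^{d-1}$ (the extra $t^{-1}$ being the gain released by the integration by parts). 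Second a refined one: run the $TT^\ast$/Plancherel argument of Lemma~\ref{lem:1}, after resolving the $z$‑dependence of $\chi\!\big(\tfrac{\nabla_\xi\psi-t^{-1}z}{r}\big)$ into a harmless Fourier integral (it only shifts the $z$‑frequency by $O(1/(rt))$), and using that the resulting kernel is, in the conjugate variables to $(y,z)$, supported on a set of measure $\lesssim 2^{jd}$; this yields an $L^2_{y,z}$ bound with a better power of $2^j$ but no gain in $r$. Interpolating the two bounds with weights $\tfrac1d$ and $\tfrac{d-1}d$ turns the crude $r^{d-1}$ into $(r^{d-1})^{1/d}=r^{(d-1)/d}$ accompanied by the compensating $2^{-j(d-1)/d}$, giving the asserted factor $t^{-1-d/2}(r2^{-j})^{(d-1)/d}$; collecting the residual powers of $2^j$ left over from the various integrations by parts (each producing a positive power of $2^j$) furnishes some $\nu>0$, and summability in $j\le0$ then follows as in the proofs of Theorems~\ref{thm:K2} and~\ref{thm:K3}.

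The main obstacle I expect is the $z_p$ weight. Unlike $y_p$, which is tamed by the smallness $\lesssim 2^j$ of the $\eta$‑support (equivalently, by rapid decay of the kernel in $y$ obtained from non‑stationary phase in $\eta$), the factor $z_p$ cannot simply be integrated by parts away, because the $\xi$‑phase $t\psi-z\cdot\xi$ is genuinely stationary on the support of $\chi$. The delicate bookkeeping is therefore to trade $z_p$ against the $\chi$‑cutoff so that it costs only a bounded number of $t^{-1}z=\nabla_\xi\psi+O(r)$ factors — each absorbed by one power of the weight, which is exactly why $s>\tfrac32$ (and not merely $s\ge1$) is needed — and then to check that the interpolation exponents line up to produce precisely $(r2^{-j})^{(d-1)/d}2^{j\nu}$ with $\nu>0$.
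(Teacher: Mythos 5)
You have two genuine gaps, and they sit at the heart of the argument. First, the weight bookkeeping at your very first step is incompatible with the claimed $s>\frac32$. After your Cauchy--Schwarz reduction, the quantity to control is $\sup_x\bigl\|\langle y\rangle^{-s}\langle z\rangle^{-s}(y_p+z_p)\,R^j_{1,2}(x;\cdot,\cdot)\bigr\|_{L^2_{y,z}}$, and since the best pointwise information on $R^j_{1,2}$ is boundedness up to a $(|y|+|z|)^\epsilon$ growth (as in Lemma~\ref{lem:K_12}), finiteness of this norm forces $2(s-1-\epsilon)>d$, i.e. $s>2$ when $d=2$ and $s>\frac52$ when $d=3$, not $s>\frac32$. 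This is precisely why the paper does not treat $\mathcal{E}_j$ by a weighted kernel bound alone: it proves an operator estimate on $T_{R^j_{1,2}}$ that costs only weight $>\frac12$, by interpolating the pointwise-kernel bound \eqref{eq:E1} (which needs weight $>\frac d2$) against the Fourier-side bound \eqref{eq:E2} (which needs only weight $>0$), and then spends one full weight on the factor $x_p$. Nothing in your scheme ever produces an estimate cheaper than $\frac d2$ in weight; in particular the piece where the $t^{-1}z$ part of $\nabla_\xi\psi$ is ``reabsorbed as one further $z_p$ weight'' is circular -- it returns you to the object you started from and again demands $s>1+\frac d2$.

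Second, the attempt to remove the weights through the phase identities loses the decisive power of $t$. In the term where $t\sigma\,\partial_{\eta_p}\phi$ cancels the denominator $\frac{1}{it\sigma\partial_{\eta_p}\phi}$, the resulting kernel is, up to a constant, exactly $K^j_{1,2}$ \emph{before} the integration by parts; Lemma~\ref{lem:oscillatory} plus the $\eta$-support count then give only $t^{-\frac d2}\,2^j r^{d-1}$, with no additional $t^{-1}$: the ``$t^{-1}$ released by the integration by parts'' that you invoke is precisely the factor you have just cancelled, and near the time-resonant set ($\phi$ vanishing) there is no non-stationary phase in $\sigma$ to recover it. For $d=2$, $r=2^j$, this is $t^{-1}2^{2j}$, a full power of $t$ short of the needed $t^{-2}2^{j\nu}$, and after summation in $j$ it would destroy the conclusion of Theorem~\ref{thm:K2}. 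Your ``refined'' bound is also not an adequate substitute for \eqref{eq:E2}: the paper's route resolves the cutoff $\chi\bigl(\frac{\nabla_\xi\psi-t^{-1}z}{r}\bigr)$ in a parameter $\theta$, proves an $L^2\times L^2\to L^1$ bound for the frozen-time multipliers $T_{m_{s,j,\theta}}$ via near-orthogonality of a covering of $\Omega^j_2$ by $2^j$-balls, applies the linear dispersive estimate to $e^{i(t-s)a(D)}$, and integrates in $s$ to produce $(t\rho)^{-\gamma}$; note that your argument never uses the hypothesis $t\rho\geq 2^{ja}$, which is essential in the final interpolation to turn the factors $(t\rho)^{-\gamma}$ and $(\rho 2^j)^{-\epsilon}$ into the harmless $2^{j\nu}$.
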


\begin{proof}
Recall that 
$$ {\mathcal E}_j(f,g) := i \left( T_{R^j_{1,2}}(x_pf,g) + T_{R^j_{1,2}}(f,x_p g)\right),$$
with the kernel 
\begin{align*} 
 \lefteqn{R^j_{1,2}(y,z) \overset{def}{=}} & & \\
 & & \frac{1}{(2\pi)^d} \int_\rho^{\epsilon_0}  \int_{\R^{2d}}  e^{it\psi(\xi,\eta,\sigma)} \left[\frac{m_2 \chi_{\Omega^j_2}}{it\sigma \partial_{\eta_p} \phi} \, \chi\left(\frac{\nabla_\xi \psi -t^{-1}z}{r}\right)  \right]  e^{-i(y-z)\eta-iz\xi} \,d\xi \,d\eta\,d\sigma, 
\end{align*}
where we chose the coordinate $\eta_p$ such that $|\nabla_\eta \phi| \simeq |\partial_{\eta_p} \phi| \simeq 2^j$.
First, an argument similar to the proof of Lemma \ref{lem:K_12} yields that $R^j_{1,2}$ satisfies the pointwise bound
$$ \left|R^j_{1,2}(y,z) \right| \lesssim \frac{r^{d-1}}{t^{\frac{d}{2}+1+\epsilon}} \frac{1}{(\rho 2^j)^\epsilon} [2^{-j} + |y|+|z|]^{\epsilon},$$
for every $\epsilon>0$.
This means that for every $\epsilon>0$ and $s>\frac{d}{2}+\epsilon$,
\begin{align} \| T_{R^j_{1,2}} \|_{L^{2,s} \times L^{2,s} \to L^\infty} \lesssim \frac{r^{d-1}}{t^{\frac{d}{2}+1+\epsilon}} \frac{1}{(\rho 2^{j})^\epsilon} . \label{eq:E1} \end{align}
We will now derive another estimate by viewing the problem in Fourier space, and finally interpolate between the two estimates. More specifically, writing first
$$
\chi \left(\frac{\nabla_\xi \psi -t^{-1}z}{r} \right) = \frac{1}{(2\pi)^{d/2}} \int_{\mathbb{R}^d} r^d \widehat{\chi}(r \theta) e^{i\theta (\nabla_\xi a + \sigma \nabla_\xi \phi + t^{-1} x - t^{-1} z)}\,d\theta,
$$
one obtains the formula for $T_{R^j_{1,2}}$
\begin{equation}
\label{rougequeue}
\begin{split}
& T_{R^j_{1,2}}(f,g)(x) =\\
& \qquad \quad \frac{1}{(2\pi)^{d/2}} \int_{\R^d} r^d \widehat{\chi}(r \theta) \left[ e^{i t^{-1} \theta x} t^{-1} \int_{\rho t}^{\epsilon_0 t} T_{m_{s,j,\theta}} ( e^{isb(D)} f, e^{isc(D)} e^{-i t^{-1} \theta \cdot} g)\, ds\right]\, d\theta,
\end{split}
\end{equation}
with the symbol
$$ 
m_{s,j,\theta}(\xi,\eta) \overset{def}{=} e^{i \theta (\nabla_\xi a + \sigma \nabla_\xi \phi)} \frac{1}{is \partial_{\eta_p} \phi(\xi,\eta)} (m_2 \chi_{\Omega^j_2})(\xi,\eta).
$$
Then this symbol has the following regularity (uniformly in $s,\theta$):
$$ \left|\nabla_\eta^\alpha \nabla_\xi^\beta m_{s,j,\theta}(\xi,\eta)\right| \lesssim \frac{1}{s2^j} 2^{-j(|\alpha|+|\beta|)}$$
since $r\geq 2^j$ and one can assume $t|\theta| \lesssim r^{-1}$ due to the fast decay of $\widehat{\chi}$. Moreover $m_{s,j,\theta}$ is supported on $\Omega^j_2$, which is included into a $2^j$-neighborhood of the translated $d$-dimensional manifold ${\mathcal S}:=\{ \nabla_\eta \phi=0\}$. This sub-manifold may be parametrized by the $\eta$-variable or $(\xi-\eta)$-variable (indeed $\operatorname{Hess}_\eta [\phi]$ and $(\nabla_\xi+\nabla_\eta) \nabla_\eta \phi$ are non-degenerate on ${\mathcal S}$). So $\Omega^j_2$ may be covered by a collection of $2^j$-balls whose projections on $\eta$ and $\xi-\eta$ are almost disjoint. Each of these elementary bilinear operators (obtained by restricting the symbol on one of the balls) is bounded from $L^2 \times L^2$ to $L^1$ with a norm $\lesssim \frac{1}{s2^j}$. By the Cauchy-Schwartz inequality and orthogonality between the elementary operators, this same bound can be obtained for $T_{m_{s,j,\theta}}$ (we refer the reader to \cite{BG2} for such an argument in the one dimensional case). In other words, uniformly in $\theta$:
\begin{align} \|T_{m_{s,j,\theta}} \|_{L^2 \times L^2 \to L^1} \lesssim \frac{1}{s2^j}. \label{eq:221} \end{align}
Similarly, one integration by parts in $\eta$ in
\begin{align*} 
\lefteqn{T_{m_{s,j,\theta}} ( e^{isb(D)} f, e^{isc(D)} e^{-i t^{-1} \theta \cdot} g)(x)=} & & \\
 & &  \int\int  e^{ix\xi} m_{s,j,\theta}(\xi,\eta) e^{is(b(\eta)+c(\xi-\eta))}\widehat{f}(\eta) \widehat{g}(\xi-\eta+t^{-1} \theta) d\eta d\xi,
\end{align*}
yields the following bound
\begin{align*} \|T_{m_{s,j,\theta}} ( e^{isb(D)} \cdot , e^{isc(D)} e^{-i t^{-1} \theta} \cdot ) \|_{L^{2,1} \times L^{2,1} \to L^1} \lesssim \frac{1}{s^2 2^{3j}} \end{align*}
since on $\Omega^j_2$, $|\nabla_\eta \phi(\xi-\theta,\eta)| \simeq 2^j$.
So by interpolation with (\ref{eq:221}): for every $\gamma>0$ (uniformly with respect to $s,\theta$)
\begin{align*} \|T_{m_{s,j,\theta}} \|_{L^{2,\gamma} \times L^{2,\gamma} \to L^1} \lesssim \frac{1}{s^{1+\gamma} 2^{(1+2 \gamma)j}}.\end{align*}
Now coming back to the operator $T_{R^j_{1,2}}$ via the formula~\eqref{rougequeue}, the dispersive estimates on the propagators give
\begin{align}
 \|T_{R^j_{1,2}}\|_{L^{2,\gamma} \times L^{2,\gamma} \to L^\infty} & \lesssim  t^{-1} \int_{\rho t}^{\epsilon_0 t} (t-s)^{-\frac{d}{2}} \frac{1}{s^{1+\gamma} 2^{(1+2 \gamma)j}}  ds \nonumber \\
 & \lesssim t^{-1-\frac{d}{2}}  2^{-(1+2 \gamma)j} (t\rho)^{-\gamma}. \label{eq:E2}
\end{align}
By interpolation between (\ref{eq:E1}) and (\ref{eq:E2}) and using the assumption that $t\rho\geq 2^{ja}$ for some parameter $a<0$, we deduce that for every $s>\frac{1}{2}$ there exists $\nu>0$ with
\begin{align}
 \|T_{R^j_{1,2}}\|_{L^{2,s} \times L^{2,s} \to L^\infty} & \lesssim  t^{-1-\frac{d}{2}}  (r2^{-j})^{\frac{d-1}{d}} 2^{j\nu},
\end{align}
which allows us to conclude.
\end{proof}

\begin{lemma} \label{lem:K_2} Assume $d \in \{2,3\}$. The kernel $K^j_2$ satisfies the following estimate: for any $N\geq 2$,
$$ \left|K^j_2(y,z)\right| \lesssim (t2^{j}r)^{1-N} \frac{2^{dj}}{t} = (t2^{j}r)^{-N} 2^{j(d+1)}r.$$
\end{lemma}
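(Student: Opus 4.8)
The term $K^j_2$ is precisely the piece on which the phase is non-stationary in $\xi$: by construction, on $\operatorname{Supp}\big[1-\chi\big((\nabla_\xi\psi-t^{-1}z)/r\big)\big]$ one has $|\nabla_\xi\psi-t^{-1}z|\gtrsim r$, so the $\xi$-gradient of the oscillatory phase $t\psi(\xi,\eta,\sigma)-z\cdot\xi$ appearing in~\eqref{eq:K2} has size $|t\nabla_\xi\psi-z|\gtrsim tr$. The plan is therefore to integrate by parts $N$ times in $\xi$ with the vector field $\dfrac{1}{i\,|t\nabla_\xi\psi-z|^{2}}(t\nabla_\xi\psi-z)\cdot\nabla_\xi$, and then --- this is the one slightly delicate point --- to carry out the surviving $\sigma$-integration \emph{before} the $(\xi,\eta)$-integration.

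Each integration by parts contributes a factor $|t\nabla_\xi\psi-z|^{-1}\lesssim(tr)^{-1}$ and sends one $\xi$-derivative onto the amplitude $m_2\chi_{\Omega^j_2}$, onto the cutoff, or onto the vector field itself. On $\Omega^j_2$ one has $\operatorname{dist}((\xi,\eta),\mathcal R)\gtrsim 2^j$, so $m_2\chi_{\Omega^j_2}$ and its $\xi$-derivatives scale like $2^{-j}$; a derivative hitting the cutoff brings down $\tfrac1r\operatorname{Hess}_\xi\psi$, which is $O(1/r)\le O(2^{-j})$ because $r\ge 2^j$; and a derivative on the vector field gains $t\,|\operatorname{Hess}_\xi\psi|/|t\nabla_\xi\psi-z|^{2}\lesssim(tr^{2})^{-1}\le 2^{-j}(tr)^{-1}$ --- here only $\operatorname{Hess}_\xi\psi=\operatorname{Hess}[a]+\sigma\operatorname{Hess}_\xi\phi=O(1)$ for $\sigma\le\epsilon_0$ is used, no non-degeneracy. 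Thus $N$ integrations by parts leave an amplitude bounded, uniformly in all variables, by $2^{-jN}\,|t\nabla_\xi\psi-z|^{-N}$ on the set where $(\xi,\eta)\in\Omega^j_2$ and $|t\nabla_\xi\psi-z|\gtrsim tr$.

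It then remains to integrate $2^{-jN}\,|t\nabla_\xi\psi-z|^{-N}$ over that set. Performing the $\sigma$-integral first, fix $(\xi,\eta)\in\Omega^j_2$ and note that $\sigma\mapsto t\nabla_\xi\psi(\xi,\eta,\sigma)-z=W_0+t\sigma\,\nabla_\xi\phi(\xi,\eta)$ is an affine path with velocity $t\nabla_\xi\phi$, whose norm is $\gtrsim 1$ on $\operatorname{Supp}m$ by assumption~(A2) (taking $\operatorname{Supp}m$ a small neighbourhood of $\mathcal R$). After the substitution $\tau=t\,|\nabla_\xi\phi|\,\sigma$ the inner integral becomes a one-dimensional integral of the form $\int_{\mathbb R}\mathbf 1_{\{|W_0+\tau e|\gtrsim\ell\}}\,|W_0+\tau e|^{-N}\,d\tau$ with $\ell\sim tr$ and $|e|=1$; a short case analysis (depending on whether the distance from $W_0$ to the line $\mathbb R e$ is $\gtrsim\ell$ or $\lesssim\ell$) shows this is $\lesssim\ell^{\,1-N}$, uniformly in $W_0$, provided $N>1$ (which holds since $N\ge 2$). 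Hence
$$
\int_0^{\epsilon_0}\frac{\mathbf 1_{\{|t\nabla_\xi\psi-z|\gtrsim tr\}}}{|t\nabla_\xi\psi(\xi,\eta,\sigma)-z|^{N}}\,d\sigma\ \lesssim\ \frac{(tr)^{1-N}}{t},
$$
and integrating this over $(\xi,\eta)\in\Omega^j_2$, whose measure is $\lesssim 2^{j(d+1)}$, yields
$$
|K^j_2(y,z)|\ \lesssim\ 2^{-jN}\,\frac{(tr)^{1-N}}{t}\,2^{j(d+1)}\ =\ (t2^{j}r)^{1-N}\frac{2^{dj}}{t},
$$
which is the asserted bound, its equality with $(t2^{j}r)^{-N}2^{j(d+1)}r$ being immediate.

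The step I expect to be the real obstacle --- everything else being routine bookkeeping of derivative bounds and the volume count $|\Omega^j_2|\lesssim 2^{j(d+1)}$ --- is the one just used in the third paragraph. A naive estimate that replaces the weight $|t\nabla_\xi\psi-z|^{-N}$ by its supremum $(tr)^{-N}$ and multiplies by the full volume $\epsilon_0\,|\Omega^j_2|$ loses a factor $\epsilon_0/r\ge 1$ and is genuinely too weak for Theorems~\ref{thm:K2}--\ref{thm:K3}. What rescues the estimate is that the $\sigma$-dependence of the phase is affine with non-vanishing speed $\sim t$ (this is exactly where~(A2) enters), so the $\sigma$-integral of the weight supplies an extra $\tfrac1t$ --- equivalently the missing power of $r$ --- over the crude count. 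One must also keep in mind that the cutoff $\chi\big((\nabla_\xi\psi-t^{-1}z)/r\big)$ itself depends on $\sigma$, which is why the $\sigma$-integration has to be postponed until after the integrations by parts in $\xi$.
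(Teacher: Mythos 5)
Your proof is correct and follows essentially the same route as the paper: $N$ non-stationary-phase integrations by parts in $\xi$ on the support of $1-\chi$, the derivative bookkeeping $2^{-j}$ per derivative using $r\geq 2^j$, then the key step of integrating in $\sigma$ first, exploiting via (A2) that $\sigma\mapsto\nabla_\xi\psi-t^{-1}z$ is affine with speed $|\nabla_\xi\phi|\gtrsim 1$ to gain $r^{1-N}$ rather than the crude $r^{-N}$, and finally the volume bound $|\Omega^j_2|\lesssim 2^{j(d+1)}$. The only cosmetic difference is that you integrate by parts with the full gradient vector field while the paper picks a single coordinate $\xi_q$ with $|\partial_{\xi_q}(\psi-t^{-1}z\cdot\xi)|\sim|\nabla_\xi(\psi-t^{-1}z\cdot\xi)|$.
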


\begin{proof} Recall first that
\begin{equation*}
\begin{split}
& K^j_2(y,z)= \\
&\quad \frac{1}{(2\pi)^d}  \int_0^{\epsilon_0}  \int_{\R^{2d}}  e^{it\psi(\xi,\eta,\sigma)} (m_2\chi_{\Omega^j_2})(\xi,\eta) e^{-i(y-z)\eta-iz\xi}  \left[1-\chi\left(\frac{\nabla_\xi \psi -t^{-1}z}{r}\right)\right]\,d\xi \,d\eta\,d\sigma.
\end{split}
\end{equation*}
To estimate $K^j_2$, we will integrate by parts in $\xi$ using the identity 
$$
\frac{1}{it (\partial_{\xi_q} \psi - t^{-1} z_q)} \partial_{\xi_q} e^{it(\psi - t^{-1}z\xi)} = e^{it(\psi - t^{-1}z\xi)},
$$
where we take the coordinate $q$ such that $|\partial_{\xi_q} (\psi - t^{-1}z\xi)| \sim |\nabla_\xi (\psi - t^{-1}z\xi)|$ (introducing cutoff functions if need be).
Using the estimate
$$ \left|\nabla_\xi^\alpha \left[ m_2(\xi,\eta) \chi_{\Omega^j_2}(\xi,\eta) e^{-i(y-z)\eta}  \left[1-\chi\left(\frac{\nabla_\xi \psi-t^{-1}z}{r}\right)\right]\right] \right| \lesssim 2^{-j|\alpha|},$$
and the fact that $r \geq 2^j$, we get after $N$ integration by parts the bound
\begin{equation}
\label{bruant}
\left|K^j_2(y,z)\right| \lesssim \int_0^{\epsilon_0}  \int_{\R^{2d}}  \left(\frac{1}{t|\nabla_\xi \psi - t^{-1}z| 2^{j}} \right)^N {\bf 1}_{\Omega^j_2}(\xi,\eta) {\bf 1}_{|\nabla_\xi \psi-t^{-1}z|\geq r} \,d\xi \,d\eta\,d\sigma.
\end{equation}
Now observe that, for $(\xi,\eta)$ fixed, 
\begin{equation}
\label{bergeronnette}
\int_0^{\epsilon_0} \frac{1}{|\nabla_\xi \psi - t^{-1}z|^N} {\bf 1}_{|\nabla_\xi \psi-t^{-1}z|\geq r}\,d\sigma \lesssim r^{1-N}.
\end{equation}
Indeed, $\nabla_\xi \psi - t^{-1}z = \nabla_\xi a + \sigma \nabla_\xi \phi + X - t^{1}{z}$. By the assumption (A2), $\nabla_\xi \phi$ is not zero in a neighbourhood of $\mathcal{R}$. Therefore, for $(\xi,\eta)$ fixed, $\nabla_\xi \psi - t^{-1}z = \sigma u + v$, with $u,v \in \mathbb{R}^d$ and $|u|\gtrsim 1$. It is now easy to show that 
$$
\int_0^{\epsilon_0} \frac{1}{|\sigma u + v|^N} 
{\bf 1}_{|\sigma u + v|\geq r}\,d\sigma \lesssim r^{1-N},
$$
hence the inequality~(\ref{bergeronnette}).

Coming back to~(\ref{bruant}), this gives
\begin{equation*}
\left|K^j_2(y,z)\right| \lesssim \int_{\R^{2d}} {\bf 1}_{\Omega^j_2}(\xi,\eta) 
\frac{1}{(t2^j)^N} r^{1-N} \,d\xi \,d\eta \lesssim (t2^{j})^{-N} r^{1-N} |\Omega^j_2| \lesssim (t2^{j}r)^{1-N} \frac{2^{dj}}{t}.
\end{equation*}
\end{proof}

\section{Application to water waves}

\label{sectionww}

\subsection{A toy model}

Consider the water waves system in dimension 3, with infinite depth, gravity and surface tension. To be more specific: consider an irrotational fluid, governed by Euler's equation,
with a free surface; assume that the only forces acting on the fluid are the gravity $g$ and surface tension $\sigma$.
The free surface is given by a graph parameterized by the height function $h$: it is $\{ (x,h(x)), x \in \mathbb{R}^2 \}$. The domain occupied by the fluid
is infinitely deep: it is given by $\{ (x,z) \in \mathbb{R}^2 \times \mathbb{R} \;\mbox{such that}\; z \leq h(x) \}$. Since the fluid is irrotational and incompressible, its 
velocity is given by the gradient of a harmonic function: $v = \nabla \widetilde{\psi}$. Finally, $\widetilde{\psi}$ is fully determined by its value on the free surface, which we denote
by $\psi(x) \overset{def}{=} \widetilde{\psi}(x,h(x))$.

We will describe this system by the ``Zakharov variables'' $(h,\psi)$. Our interest lies in stability of the equilibrium state given by $(h,\psi) = (0,0)$. It was established
in~\cite{GMS1}~\cite{Wu} in the case $g>0$, $\sigma = 0$, and in~\cite{GMS2} in the case $g=0$, $\sigma>0$; to be more specific, it is proved in these papers that, for data sufficiently close
to $(0,0)$ in a sufficiently strong topology, the resulting solution is global, and scatters in an $L^2$-type space as $t \rightarrow \infty$. 

The case $g,\sigma>0$ is considerably more difficult, for two main reasons: absence of scaling invariance (which is clear), and more intricate resonant structure (which will be described shortly).
Our aim in this section is to initiate the study of the case $g,\sigma>0$, at a fairly heuristic level. To be more precise, we will study the second iterate (in an iterative resolution scheme)
of a toy model; this toy model is obtained by dropping in the expansion of the water waves equation around $(0,0)$ all terms which are cubic or of higher order. 
Upon setting $\Lambda = |D|$, it reads (see Sulem and Sulem~\cite{SuSu})
$$
\left\{ \begin{array}{l} \partial_t h = \Lambda \psi - \nabla \cdot( h \nabla \psi) - \Lambda (h \Lambda \psi) \\
         \partial_t \psi = - g h + \sigma \Delta h - \frac{1}{2} |\nabla \psi|^2 + \frac{1}{2} |\Lambda \psi|^2 
        \end{array}
\right.
$$
Setting
$$
\widetilde{h} \overset{def}{=} p(D) h \qquad \mbox{with} \qquad p(D) = \sqrt{ \frac{g - \sigma \Delta}{\Lambda} },
$$
it becomes
$$
\left\{ \begin{array}{l} \partial_t \widetilde{h} = \Lambda p(D) \psi + T_{m_1} (\widetilde{h},\psi) \\
         \partial_t \psi = (-g + \sigma \Delta) p(D)^{-1} \widetilde{h} + T_{m_2} (\psi,\psi)
\end{array} \right.
$$
with
$$
\left\{ \begin{array}{l} m_1(\xi,\eta) = (2\pi)^{d/2} \frac{p(\xi)}{p(\eta)} ( \xi \cdot (\xi-\eta) - |\xi| |\xi-\eta| ) \\
         m_2(\xi,\eta) = \frac{ (2\pi)^{d/2}}{2} \left[ \eta \cdot (\xi-\eta) + |\eta| |\xi-\eta| \right].
        \end{array}
\right.
$$
Finally, introduce the complex variable
$$
u = \widetilde{h} + i \psi
$$
for which the equation reads
$$
i \partial_t u - \tau(D) u = T_{m_{++}} (\bar u, \bar u) + T_{m_{--}} (u,u) + T_{m_{+-}} (\bar u,u) + T_{m_{-+}} (u,\bar u)
$$
where
$$
\tau(x) \overset{def}{=} \sqrt{|x|( g + \sigma |x|^2)} \quad \mbox{and thus} \quad \tau(D) = \sqrt{\Lambda ( g - \sigma \Delta)}
$$
and $m_{\pm,\pm}$ are linear combinations of $m_1$ and $m_2$. We will also denote
$$
\mbox{if $\lambda>0$,} \qquad \bar \tau(\lambda) \overset{def}{=} \sqrt{\lambda( g + \sigma \lambda^2)} \quad \mbox{so that} \quad \tau(x) = \bar \tau(|x|).
$$

\subsection{The linear problem} We discuss here the linear problem
$$
i \partial_t u = \tau(D) u.
$$
A small computation reveals that, for $\lambda > 0$,
$$
\bar \tau'(\lambda) = \frac{g + 3 \sigma \lambda^2}{2 \sqrt{ g\lambda + \sigma \lambda^3}} \quad \mbox{and} \quad \bar \tau''(\lambda) 
= \frac{\frac{3}{2} \sigma^2 \lambda^4 + 3 g \sigma \lambda^2 - \frac{1}{2} g^2}{2 ( g \lambda + \sigma \lambda^3)^{3/2}}.
$$
It is then easy to see that the graph of $\bar \tau$ switches from concave to convex at
$$
\lambda_0 \overset{def}{=} \sqrt{\frac{(2\sqrt{3}-3)}{3} \frac{g}{\sigma}}.
$$
In other words, the Hessian of $\tau$ is non-degenerate if and only if $\lambda \neq \lambda_0$. Therefore, the dispersive estimates~(\ref{dispersive}) hold for functions whose Fourier transform is compactly supported away from the frequencies $\xi$ of size $|\xi| = \lambda_0$, whereas the decay if $|\xi|=\lambda_0$ should be slower. The exact rate of decay and number of derivatives needed has been obtained by Spirn and Wright~\cite{SW}, who showed that
$$ \|e^{it \tau(D)} \|_{B^{3/4}_{1,1} \to L^\infty} \lesssim t^{-\frac{5}{6}}$$
($B^{3/4}_{1,1}$ being the classical non-homogeneous Besov space).
We point out (as we will observe later) that even if the phase may be degenerate, it will be non-degenerate close to the resonant set. So up to some extra cut-off, to study the phenomenon of space-time resonances, this degeneracy is not relevant.

\subsection{Discussion of quadratic resonances}

Four different interactions appear in the above equation $++$, $+-$, $-+$ and $--$. The third one can be obtained from the second by symmetry, thus we will ignore it.
To study the space ($\mathcal{S}_{\pm \pm}$), time ($\mathcal{T}_{\pm \pm}$), and space-time ($\mathcal{R}_{\pm \pm}$) resonance sets for these various interactions, let us introduce the corresponding phases
$$
\phi_{\pm \pm} (\xi,\eta) = \tau(\xi) \pm \tau(\eta) \pm \tau(\xi-\eta).
$$
In the following, we characterize the space-time resonance sets in the cases $++$, $+-$ and $--$. It turns out that a function, which will be denoted $\alpha$, will
play a key role; we will discuss its properties in some detail.

\subsubsection{The $++$ interaction} This case is trivial since $\mathcal{T}_{++}=\{ 0,0 \}$.

\subsubsection{The function $\alpha$} \label{subsec:alpha} Recall that $\bar \tau$ switches from concave to convex at $\lambda_0$. Furthermore, it is easy to see that $\lim_{0} \bar \tau' = + \infty$ whereas $\lim_{+\infty} \bar \tau' = + \infty$.

It is therefore clear that any given value larger than $\bar \tau'(\lambda_0)$ is reached by $\bar \tau'$ at exactly two points. Let us denote $\alpha$ for the function which exchanges these two points:
$$
\mbox{if $\mu>0$,} \qquad \alpha(\mu) \overset{def}{=} \left\{ \begin{array}{ll} \lambda_0 & \mbox{if $\mu=\lambda_0$} \\
                                                                \nu > 0 \; \mbox{such that $\nu \neq \mu$ and $\bar \tau'(\nu) =\bar \tau'(\mu)$} & \mbox{if $\mu \neq \lambda_0$}.
                                                               \end{array}
\right.
$$
It has the following properties
\begin{itemize}
\item \vsp As $\lambda \rightarrow 0$, $\alpha(\lambda) = \frac{g}{9\sigma \lambda} + O(1)$.
\item \vsp For any $\lambda>0$, $\alpha'(\lambda)<0$.
\item \vsp As $\lambda \rightarrow \infty$, $\alpha(\lambda) = \frac{g}{9\sigma \lambda} + O\left( \frac{1}{\lambda^2} \right)$.
\end{itemize}
These equivalents are a consequence of $\bar \tau'(\lambda) \simeq_{\infty} \frac{3}{2}\sqrt{\sigma \lambda}$ and $\bar \tau'(\alpha) \simeq_0 \frac{1}{2} \sqrt{\frac{g}{\alpha}}$.

\subsubsection{The $+-$ interaction} We consider here $\phi_{+-}(\xi,\eta) = \tau(\xi) + \tau(\eta) - \tau(\xi-\eta)$. The space resonant set is
$$
\mathcal{S}_{+-} = \left\{ (\xi,\eta) \; \mbox{such that $\nabla \tau(\eta) = -\nabla \tau(\xi-\eta)$} \right\}.
$$
By rotational symmetry, we can assume that $\eta = \lambda e_0$, for a fixed direction $e_0$ and $\lambda>0$.
Then it follows from the above formula that $(\xi,\eta)$ belongs to $\mathcal{S}_{+-}$ if
$$
\xi = (\lambda - \mu) e_0 \qquad \mbox{with} \quad \left\{ \begin{array}{l} \mbox{either $\mu = \lambda$} \\ \mbox{or $\mu = \alpha(\lambda)$.} \end{array} \right.
$$
The first possibility, namely $\mu = \lambda$, leads to $\xi = 0$. But it is clear that $\phi_{+-}(0,\eta)=0$, thus $(0,\eta) \in \mathcal{R}_{+-}$ for any $\eta \in \mathbb{R}^2$. We now turn to the second possibility, namely $\mu = \alpha(\lambda)$. Let
$$
f(\lambda) \overset{def}{=} \bar \tau (|\lambda - \alpha(\lambda)|) + \bar \tau(\lambda) - \bar \tau(\alpha(\lambda)).
$$
Matters reduce now to finding zeros of $f$, since $f(\lambda)=0$ if and only if $(\xi,\eta) = ((\lambda - \alpha(\lambda))e_0,\lambda e_0) \in \mathcal{R}_{+-}$.
A small computation gives
$$
f'(\lambda) = \left\{ \begin{array}{ll} (1-\alpha'(\lambda))(\tau'(\lambda - \alpha(\lambda))+\tau'(\lambda)) & \mbox{if $\lambda>\lambda_0$} \\
                       (1-\alpha'(\lambda))(-\tau'(\alpha(\lambda)-\lambda)+\tau'(\lambda)) & \mbox{if $\lambda<\lambda_0$} \\
                      \end{array}
\right.
$$
Denoting $\lambda_1$ for the unique solution of $\alpha(\lambda_1) = 2 \lambda_1$, clearly $\lambda_1 < \lambda_0$. It is easy to deduce from the above that
$f'>0$ on $(0,\lambda_1) \cap (\lambda_0,\infty)$, whereas $f'<0$ on $(\lambda_1,\lambda_0)$. It is now possible to fully describe $f$:
\begin{itemize}
\item \vsp As $\lambda \rightarrow 0$, $f(\lambda) = O(\sqrt{\lambda})$ (follows from the properties of $\alpha$).
\item \vsp $f$ is increasing on $(0,\lambda_1)$, thus $f(\lambda_1)>0$.
\item \vsp $f$ is decreasing on $(\lambda_1,\lambda_0)$, down to $f(\lambda_0)=0$.
\item \vsp $f$ is increasing on $(\lambda_0,\infty)$, with $f(\lambda) \rightarrow \infty$ as $\lambda \rightarrow \infty$.
\end{itemize}
Thus, $f$ only vanishes at $\lambda_0$; this implies that $(0,\eta) \in \mathcal{R}_{+-}$ if $|\eta| = \lambda_0$. But we already saw above that $(0,\eta) \in \mathcal{R}_{+-}$ for any $\eta$. The general conclusion is that
$$
\mathcal{R}_{+-} = \{ (0,\eta) \;,\;\eta \in \mathbb{R}^d \}.
$$

\subsubsection{The $--$ interaction}We consider now $\phi_{--}(\xi,\eta) = \tau(\xi) - \tau(\eta) - \tau(\xi-\eta)$. The space resonant set is
$$
\mathcal{S}_{--} = \left\{ (\xi,\eta) \; \mbox{such that $\nabla \tau(\eta) = \nabla \tau(\xi-\eta)$} \right\}.
$$
Once again, we can assume that $\eta = \lambda e_0$, for a fixed direction $e_0$ and $\lambda>0$. Then $(\xi,\eta)$ belongs to $\mathcal{S}_{--}$ if
$$
\xi = (\lambda + \mu) e_0 \qquad \mbox{with} \quad \left\{ \begin{array}{l} \mbox{either $\mu = \lambda$} \\ \mbox{or $\mu = \alpha(\lambda)$.} \end{array} \right.
$$
Let us start with the first subcase, namely $\mu = \lambda$ ie $\xi = 2 \lambda e_0$. Then $(\xi,\eta) \in \mathcal{R}_{--}$ if and only if
$$
 0 = \phi_{--}(2\lambda e_0,\lambda e_0) = \bar \tau (2 \lambda) - 2 \bar \tau(\lambda) 
= \frac{-2g\lambda + 4 \sigma \lambda^3}{\sqrt{2g\lambda + 8 \sigma \lambda^3} + \sqrt{4g\lambda + 4 \sigma \lambda^3}} 
$$
or equivalently
$$ \lambda = \lambda_2 \overset{def}{=} \sqrt{\frac{g}{2\sigma}}$$
(notice that $\lambda_2 > \lambda_0$). This corresponds to the space-time resonant points $(\xi,\eta) = (2 \eta,\eta)$, with $\eta \in \mathbb{R}^2$ and $|\eta|=\lambda_2$. This type of space-time resonance is well-known in the literature and has been called the second harmonic resonance~\cite{SuSu,MN}.

There remains to examine the second subcase, namely $\mu = \alpha(\lambda)$ ie $\xi = (\lambda + \alpha(\lambda)) e_0$. Such a point belongs to $\mathcal{R}_{--}$ 
if and only if $f(\lambda)=0$, with
$$
f(\lambda) = \bar \tau (\lambda + \alpha(\lambda)) - \bar \tau(\lambda) - \bar \tau(\alpha(\lambda)).
$$
We claim that $f(\lambda)<0$ for every $\lambda$.
Let us now check this fact : for convenience, we write $\alpha=\alpha(\lambda)$. First we claim that for any $\lambda>0$,
\begin{equation}
 9 (1-c_0)^2  \sigma \lambda \alpha \leq (1+3c_0)^2 g,  \label{eq:la}
\end{equation}
where $c_0\overset{def}{=}\frac{(2\sqrt{3}-3)}{3}\simeq 0.15$ is such that $\lambda_0^2 \sigma = c_0 g$.

Let us check this property. By symmetry $\lambda \leftrightarrow \alpha$ we may assume that $\lambda \geq \lambda_0$. On $[\lambda_0,\infty)$, we have
\begin{equation} \bar \tau'(\lambda) \geq \ell_1(\lambda)\overset{def}{=}\frac{3}{2} (1-c_0) \sqrt{\sigma \lambda} \label{eq:eeee} \end{equation}
since
$$ 4\left[ (\bar \tau'(\lambda))^2 - \frac{9}{4} (1-c_0)^2 \sigma \lambda \right] = \frac{(g+3\sigma  \lambda^2)^2-9(1-c_0)^2 \sigma \lambda (g\lambda+\sigma \lambda^3)}{g\lambda+\sigma \lambda^3} > 0$$
and the numerator can be seen as a polynom in $\lambda^2$ of order 2 with a negative discriminant.
If $\alpha \in (0,\lambda_0]$, we have
\begin{equation} \bar \tau'(\alpha) \leq \ell_2(\alpha)\overset{def}{=} (1+3c_0) \frac{\sqrt{g}}{2\sqrt{\alpha}}, \label{eq:eeee2} \end{equation}
since 
$$ \bar \tau'(\alpha) \leq \frac{g+3 \sigma \lambda_0^2}{2\sqrt{g\alpha}}.$$
By definition of the function $\alpha=\alpha(\lambda)$, using (\ref{eq:eeee}) and (\ref{eq:eeee2}) it follows that
$$ \alpha(\lambda) \leq \ell_2^{-1}(\ell_1(\lambda)),$$
hence (\ref{eq:la}).

We now come back to the proof of $f(\lambda)<0$. Again, we may assume that $\lambda \geq \lambda_0$. Then using the convexity properties of $\bar \tau$ it comes
$$ \bar \tau(\alpha+\lambda)- \bar \tau(\lambda) \leq \alpha \bar \tau '(\alpha+\lambda)$$
and we claim that $\alpha \bar \tau '(\alpha+\lambda) < \tau(\alpha)$ (which implies the desired result). By squaring the formulas for $\bar \tau$ and $\bar \tau '$, this is equivalent to
\begin{align}
   \alpha(g+3\sigma(\alpha+\lambda)^2)^2 < 4 (\alpha+\lambda) (g+\sigma \alpha^2) (g+\sigma (\alpha+\lambda)^2). \label{eq:res}
\end{align}
First, we have
\begin{equation} 9 \sigma^2 \alpha (\alpha+\lambda)^4 < 4 \sigma (g+\sigma \alpha^2) (\alpha + \lambda)^3, \label{eq:1} \end{equation}
which is equivalent to 
$$  \sigma \alpha (9\lambda+5\alpha) < 4 g $$
and also is a consequence of $\alpha\leq \lambda_0$, (\ref{eq:la}) and
$$ \frac{(1+3c_0)^2}{(1-c_0)^2} + 5c_0 \simeq 3.77 < 4.$$  
 
Moreover we claim that
\begin{equation}
 6 \alpha \sigma g (\alpha+\lambda)^2 <  3(\alpha+\lambda) g^2.  \label{eq:2}
\end{equation}
Indeed this is implied by (using $\alpha\leq \lambda_0$)
$$  2 \sigma \lambda_0^2 +2 \sigma \alpha \lambda <  g,$$
which, using once again~(\eqref{eq:la}) is a consequence of $2c_0 +\frac{2(1+3c_0)^2}{9(1-c_0)^2} <1$.

Consequently, combining (\ref{eq:1}) and (\ref{eq:2}) allows us to conclude to (\ref{eq:res}) by expanding it, which ends the proof of $f(\lambda)<0$. Therefore,
$$
\mathcal{R}_{--} = \{(2 \eta,\eta)\;\mbox{with $\eta \in \mathbb{R}^2$ and $|\eta|=\lambda_2$}\}.
$$

\subsubsection{Conclusion} Let us summarize our findings:
\begin{itemize}
\item[(a)] For the $++$ interaction, $\mathcal{R}_{++} = \{(0,0)\}$.
\item[(b)] For the $+-$ interaction, $\mathcal{R}_{+-} = \{(0,\eta)\;\mbox{with $\eta \in \mathbb{R}^2$}\}=\{0\} \times \mathbb{R}^2$.
\item[(c)] For the $--$ interaction, $\mathcal{R}_{--} = \{(2 \eta,\eta)\;\mbox{with $\eta \in \mathbb{R}^2$ and $|\eta|=\lambda_2$}\}$
\end{itemize}
A brief look at the interaction symbols $m_1$ and $m_2$ reveals that $m_1(0,\eta) = m_2(0,\eta) = 0$ for any $\eta \in \mathbb{R}^2$;
this null form structure was already noticed and used in~\cite{GMS1, GMS2}. The analysis in these papers leads us to think that the resonances (a) and (b) will be effectively 
cancelled. However, this is not the case for (c): $m_1(2 \eta,\eta)$ vanishes, but not $m_2(2\eta,\eta)$ ! These space-time resonances will play a role in the dynamics. 

Let us check that this space-time resonant interaction ($--$ interaction with $|\eta| = \lambda_2$ and $\xi=2\eta$) is of the generic type studied here, namely that the assumptions~(\ref{pingouin}), (A1), (A2), and (A3) are satisfied.

\begin{itemize}
\item \eqref{pingouin} is satisfied for any frequency $\xi$ such that $|\xi| \neq \lambda_0$. This is the case if $(\xi,\eta)$ is sufficiently close to $\mathcal{R}_{--}$ since $\lambda_2 \neq \lambda_0$ and $\lambda_0 \neq 2 \lambda_2$.
\item (A1) is satisfied since for an arbitrary point $(2\eta,\eta)$ of $\mathcal{R}_{--}$,
$$\left[ \nabla_\xi \phi_{--} \right] (2\eta,\eta) = \nabla \tau (2\eta) - \nabla \tau(\eta) \neq 0.$$
\item To see that (A2) is satisfied, observe that for a point $(2\eta,\eta)$ of $\mathcal{R}_{--}$,
$$\operatorname{Hess}_\eta [\phi_{--}] (2\eta,\eta) = -2 \operatorname{Hess} [\tau](\eta).$$ 
Furthermore, $ \operatorname{Hess} [\tau](\eta)$ is positive: we will see in Section~\ref{appendice} that this is the case since $\bar \tau'(\lambda_2) > 0$ and $\bar \tau''(\lambda_2) > 0$ (which is a consequence of $\lambda_2 > \lambda_0$).
\item Finally, we let the reader to check that (A3)  is also satisfied (indeed, this is a consequence of easy computations using the Property \ref{prop} and definition of $\lambda_2$).
\end{itemize}

\subsection{Decay properties} Combining the previous observations with Theorem~\ref{theoremeasymptotique}, this makes the following behavior plausible for a solution $u$ of the full water-wave system:
\begin{itemize}
\item Restricting $u$ to frequencies away from $\lambda_0$ and $2 \lambda_2$ (by using a smooth Fourier multiplier vanishing there), one observes an $L^\infty$ decay $\sim \frac{1}{t}$.
\item Close to the frequency $\lambda_0$, the $L^\infty$ decay is of the order $\sim \frac{1}{t^{5/6}}$.
\item Finally, close to the frequency $\lambda_2$, the $L^\infty$ decay is of the order $\sim \frac{\log t}{t}$.
\end{itemize}

\section{Appendix: genericity of (A3) in the isotropic case} \label{appendice}

We would like here to precise some computations in the case where the dispersive relations $a,b,c$ are given by radial functions.

First an easy computation gives the following: if $\tau(\xi)= \bar \tau(|\xi|)$ with $\bar \tau$ smooth,
$$  \operatorname{Hess} [\tau](\xi) = \left( \frac{\bar \tau ''(|\xi|)}{|\xi|^2} - \frac{\bar \tau '(|\xi|)}{|\xi|^3} \right)  \xi. \xi^t + \frac{\bar \tau '(|\xi|)}{|\xi|} \textrm{Id}$$ such that
$$  \langle \operatorname{Hess} [\tau] (\xi)u,u\rangle = \left( \frac{\bar \tau ''(|\xi|)}{|\xi|^2} - \frac{\bar \tau '(|\xi|)}{|\xi|^3} \right) |\langle u,\xi\rangle|^2 + \frac{\bar \tau '(|\xi|)}{|\xi|} |u|^2,$$
or equivalently
$$  \langle \operatorname{Hess} [\tau ](\xi)u,u\rangle = \frac{\bar \tau ''(|\xi|)}{|\xi|^2} |\langle u,\xi \rangle |^2 + \frac{\bar \tau '(|\xi|)}{|\xi|} |u^{\perp\xi}|^2,$$
where $u^{\perp \xi}= u-\langle u,\xi\rangle \frac{\xi}{|\xi|^2}$ is the projection of $u$ on $\xi^\perp$.
Consequently, we deduce the following property:

\begin{property} For $\xi\neq 0$, the Hessian matrix $\operatorname{Hess} [\tau] (\xi)$ is non-degenerate if and only if $\bar \tau '(|\xi|) \neq 0$ and $\bar \tau ''(|\xi|)\neq 0$.
Moreover, $\operatorname{Hess} [\tau] (\xi)$ is a linear combination of the identity and of the 1-dimensional ranked operator in the direction $\xi$.
\end{property}

Then consider dispersive relations $a(.)=a_0(|.|)$ and similarly for $b$ and $c$, the phase function
$$ \phi(\xi,\eta) = -a(\xi) + b(\eta) + c(\xi-\eta)$$
and for $\sigma \in[0,1]$, $\psi(\xi,\eta,\sigma)=a(\xi)+\sigma \phi(\xi,\eta)$. 

As mentioned in the introduction, the resonant set is then generically of the form
$$ {\mathcal R} = \{(\xi,\eta), |\xi|=R,\ \eta=\lambda \xi\}$$
for some parameters $R>0$ and $\lambda\in\R\setminus\{0,1\}$.

\bigskip We want to discuss here the technical assumption (A3) made in our work :  

\begin{equation} \tag{A3} \left\{\begin{array}{l}
\textrm{for all $\sigma\in[0,1]$, } \operatorname{Hess}_\xi [\psi](\xi,\eta,\sigma) \textrm{ or } \\
 \operatorname{Hess}_{(\xi,\eta)} [\psi](\xi,\eta,\sigma) \textrm{ is non-degenerate on $\mathcal{R}$.}
\end{array}  \right.
\end{equation}

\medskip
First $\operatorname{Hess}_\xi [\psi](\xi,\eta,\sigma) = (1-\sigma)\operatorname{Hess}[a](\xi) + \sigma \operatorname{Hess}[c](\xi-\eta)$ and
\begin{align*}
\lefteqn{\operatorname{Hess}_{(\xi,\eta)}[\psi](\xi,\eta,\sigma) =} & & \\
 & &  \left( \begin{array}{cc}
                                                                                  (1-\sigma)\operatorname{Hess}[a](\xi) +\sigma \operatorname{Hess}[c](\xi-\eta) \ & \ -\sigma \operatorname{Hess}[c](\xi-\eta) \\
         -\sigma \operatorname{Hess}[c](\xi-\eta) \ & \ \sigma (\operatorname{Hess}[b](\eta) + \operatorname{Hess}[c](\xi-\eta))
                                                                                 \end{array} \right).
\end{align*}
Hence, 
\begin{align}
 \lefteqn{\textrm{Det}\operatorname{Hess}_{(\xi,\eta)}[\psi](\xi,\eta)}& & \nonumber\\
 & & = \sigma^d \textrm{Det} \left( \begin{array}{cc}
                                                                                  (1-\sigma)\operatorname{Hess}[a](\xi) \ & \ - \operatorname{Hess}[c](\xi-\eta) \\
         \sigma \operatorname{Hess}[b](\eta) \ & \ (\operatorname{Hess}[b](\eta) + \operatorname{Hess}[c](\xi-\eta))
                                                                                 \end{array} \right). \label{eq:compu} 
\end{align}

\bigskip 

Now let us choose a resonant point $(\xi,\eta)\in{\mathcal R}$, so that $\xi$, $\xi-\eta$ and $\eta$ have the same direction. We have seen that the three Hessian matrices are a linear combination of the identity and of the 1-dimensional range operator in the same direction. By this way all the Hessian matrices commute between themselves and $\operatorname{Hess}[b](\eta) + \operatorname{Hess}[c](\xi-\eta)$ is invertible (by Assumption (A1), since it is equal to $\operatorname{Hess}_\eta[\phi]$). We also deduce that\footnote{This is a standard computation for determinants by blocs. Indeed for $A,B,C,D$ square matrices which commute we have
$$ \textrm{Det}\left(\begin{array}{cc} A & B \\ C & D \end{array}\right) = \textrm{Det}(AD-BC)$$
as soon as $D$ is invertible. This is easily obtained by multiplying the matrix with $\left(\begin{array}{cc} D & 0 \\ -C & \textrm{Id} \end{array}\right)$.}
$$ \textrm{Det}[\operatorname{Hess}_{(\xi,\eta)}[\psi](\xi,\eta)] = \sigma^d \textrm{Det} M(\xi,\eta,\sigma)$$
with
\begin{align*}
 \lefteqn{M(\xi,\eta,\sigma)\overset{def}{=} } & & \\
 & & (1-\sigma)\operatorname{Hess}[a](\xi) \left(\operatorname{Hess}[b](\eta) + \operatorname{Hess}[c](\xi-\eta)\right) + \sigma \operatorname{Hess}[c](\xi-\eta) \operatorname{Hess}[b](\eta). 
\end{align*}

Let us now decompose the space into the component colinear with $\xi$ (which is also colinear with $\eta$ or $\xi-\eta$) and its orthogonal. Since $M(\xi,\eta,\sigma)$ is symmetrical and these two subspaces correspond to eigenvectors : the matrix is invertible if and only if its restrictions onto these two subspaces are invertible.
On $\xi$ (with $\eta=\lambda \xi$ and $|\xi|=R$), we have 
\begin{align*}
 \langle \operatorname{Hess}[a](\xi) \xi,\xi\rangle & = a_0 ''(|\xi|) |\xi|^2 = R^2 a_0''(R) \\
 \langle \operatorname{Hess}[b](\lambda \xi) \xi,\xi\rangle & =  b_0 ''(|\lambda \xi|) |\xi|^2 = R^2 b_0''(|\lambda| R) \\
  \langle \operatorname{Hess}[c]((1-\lambda) \xi) \xi,\xi\rangle & = c_0 ''(|(1-\lambda) \xi|) | \xi|^2 = R^2 c_0''(|1-\lambda| R).
\end{align*}
So the symmetrical matrix $M(\xi,\eta,\sigma)$ is invertible in $\xi$ if and only if
\begin{equation}
\label{invert1}
(1-\sigma) a_0''(R) \left[ b_0''(|\lambda| R) + c_0''(|1-\lambda| R) \right] + \sigma  b_0''(|\lambda| R) c_0''(|1-\lambda| R) \neq 0. 
\end{equation}
On the orthogonal space $E\overset{def}{=}\xi^\perp=\eta^\perp$, we have
\begin{align*}
  \operatorname{Hess}[a](\xi)_{|E}  & =\frac{a_0 '(|\xi|)}{|\xi|} \textrm{Id}_{|E} = \frac{a_0 '(R)}{R} \textrm{Id}_{|E} \\
 \operatorname{Hess}[b](\lambda \xi) &= \frac{b_0 '(|\lambda \xi|)}{|\lambda \xi|} \textrm{Id}_{|E} = \frac{b_0 '(|\lambda| R)}{|\lambda| R} \textrm{Id}_{|E} \\
 \operatorname{Hess}[c]((1-\lambda) \xi) & = \frac{c_0 '(|(1-\lambda)\xi|)}{|(1-\lambda)\xi|} \textrm{Id}_{|E} = \frac{c_0 '(|1-\lambda|R)}{|1-\lambda| R} \textrm{Id}_{|E}.
\end{align*}
So the matrix $M(\xi,\eta,\sigma)$ is invertible on $E$ if and only if
$$ (1-\sigma) a_0 '(R) \left[ \frac{b_0 '(|\lambda| R)}{|\lambda|} + \frac{c_0 '(|1-\lambda|R)}{|1-\lambda|} \right] + \sigma \frac{b_0 '(|\lambda| R)}{|\lambda|} \frac{c_0 '(|1-\lambda|R)}{|1-\lambda|} \neq 0 $$
which is equivalent to 
\begin{equation}
\label{invert2}
(1-\sigma) a_0 '(R) \left[ |1-\lambda| b_0 '(|\lambda| R) + |\lambda|c_0 '(|1-\lambda|R) \right] + \sigma b_0 '(|\lambda| R) c_0 '(|1-\lambda|R) \neq 0.
\end{equation}
We also deduce the following property in the current context: $M(\xi,\eta,\sigma)$ is non-degenerate on the resonant set if and only if (\ref{invert1}) and (\ref{invert2}) hold.

Finally, it is easy to see that $\operatorname{Hess}_\xi [\psi](\xi,\eta,\sigma)$ is non-degenerate on $\mathcal{R}$ if and only if
\begin{equation} (1-\sigma) a_0''(R) + \sigma c_0''(|1-\lambda| R) \neq 0 \label{invert3} \end{equation}
and
\begin{equation} (1-\sigma)|1-\lambda| a_0 '(R) + \sigma c_0 '(|1-\lambda|R) \neq 0. \label{invert4} \end{equation}

\bigskip We note that if (\ref{invert3}) is not satisfied then (\ref{invert1}) holds and if (\ref{invert4}) is not satisfied then (\ref{invert2}) holds.
To summarize all these observations: 

\begin{property} \label{prop} For a resonant set given by $R>0$ and $\lambda \neq 1$, the assumption (A3) is equivalent to the following: for all $\sigma \in[0,1]$, (\ref{invert1}) and (\ref{invert2}) hold or (\ref{invert3}) and (\ref{invert4}) hold.
\end{property}

It follows from this property that the hypothesis (A3) is generically satisfied.

\bigskip

\noindent {\bf Acknowledgements:} The second author is grateful to Beno\^it Pausader for enlightening conversations during the writing of this article.

\end{document}